\DeclareSymbolFont{SY}{U}{psy}{m}{n}
\DeclareMathSymbol{\emptyset}{\mathord}{SY}{'306}
\numberwithin{equation}{section}
\newtheorem{theorem}{Theorem}
\newtheorem{lemma}[theorem]{Lemma}
\theoremstyle{definition}
\theoremstyle{remark}
\begin{document}

\title[Spectral asymptotics for the fourth-order operator with periodic coefficients]
{Spectral asymptotics for the fourth-order operator with periodic coefficients}
%\thanks{$^*$This work was partially supported by the grant MK-160.2022.1.1 of the President of Russian Federation for young candidates of sciences..}

\author[D. M. Polyakov]
{Dmitry M. Polyakov}

\address{Dmitry M. Polyakov\newline\hspace*{9mm} Southern Mathematical Institute, Vladikavkaz Scientific Center of RAS,
Vladikavkaz, 22 Markus, Russia}
\email{DmitryPolyakow@mail.ru}

\keywords{spectrum, fourth-order differential operator, eigenvalue asymptotic}.

\begin{abstract}
We consider the self-adjoint fourth-order operator with real $1$-periodic coefficients
on the unit interval. The spectrum of this operator is discrete. We determine the
high energy asymptotics for its eigenvalues.
\end{abstract}

\maketitle

\section{Introduction and main result}\label{sec1}

We consider the self-adjoint fourth-order operator $H$ acting in
the Hilbert space $L^2(0, 1)$ and given~by
\[
Hy=y^{(4)}+(py')'+qy, \quad y'(0)=y'''(0)+p(0)y'(0)=y(1)=y''(1)=0, \label{1.1}
\]
where $p$ and $q$ are real $1$-periodic coefficients, $p$, $q\in L^1(\mathbb{T})$,
$\mathbb{T}=\mathbb{R}\setminus\mathbb{Z}$. This operator is determined on the domain
\begin{flalign*}
\mathrm{Dom}(H)= \{y\in L^2(0, 1): y', y'',\, &y'''+py'\in L^1(0, 1),
y^{(4)}+(py')'+qy\in L^2(0, 1),\\
&y'(0)=y'''(0)+p(0)y'(0)=y(1)=y''(1)=0\}.
\end{flalign*}

The results for the operator $H$ are used in analysis of thin liquid polymer films
of nanometer thickness. These thin films are often found to destabilize and dry from an
underlying solid substrate due to intermolecular forces between the liquid film and
the substrate. The late phases of the ensuing complex dewetting process
are found to be configurations of droplets. The droplets tend to assume their own slow
dynamics via mass exchange through an even thinner film connecting the droplets.
Using the scale separation between the height of the thin film and the extent
of the evolving patterns, the free boundary problem for the Navier-Stokes equations can
be reduced to the equation for the profile of the film (see~\cite{Oron}, \cite{Matar}).
The evolution of the profile of the film is described by the corresponding
one-dimensional thin film equation
\[
\partial_ty=-\partial_x\Big(y^3\partial_x\big(\partial_{xx}y-\Pi (y)\big)\Big),
\]
considered on the interval $[0, 1]$ with the boundary conditions
\[
\partial_{xxx}y=0, \qquad \partial_xy=0, \qquad \text{at} \qquad x=0, \qquad x=1.
\]
Here $\Pi$ is potential function. The linearization of this equation
(see~\cite{Kitavtsev} and \cite{Pugh}) leads to the eigenvalue problem for the operator $H$.

A great number of papers are devoted to the eigenvalue asymptotics for the second order
operators. We mention only the books of Marchenko \cite{Marchenko} and Levitan and
Sargsyan \cite{LevSarg}, the review Fulton and Pruess \cite{FulPruess},
and see also the references therein.

The spectral properties of fourth-order differential operators with various
boundary conditions are studied in many papers.
Caudill, Perry, and Schueller~\cite{CPS} described iso-spectral potentials for these operators.
McLaughlin~\cite{McLaughlin1}, \cite{McLaughlin2} investigated inverse spectral
problems for fourth-order operators with the Neumann type boundary conditions.
Polyakov~\cite{PolSMJ} obtained the eigenvalue
asymptotics and the estimates of spectral projections for these operators.
Badanin and Korotyaev determined eigenvalue asymptotics and trace formulas for
self-adjoint fourth-order operators on the circle \cite{BK-20131} and on the
unit interval with Dirichlet type boundary conditions \cite{BK-20152}.
The fourth-order operators with periodic coefficients on the line were considered by
Badanin and Korotyaev \cite{BK-2011}, \cite{BK-2014}.
Gunes, Kerimov, and Kaya \cite{Kerimov} obtained the eigenvalue asymptotics for
fourth-order differential operators with periodic (antiperiodic) boundary conditions
and proved that the system of eigenfunctions and associated functions of this
operator form a basis in the space $L_p(0, 1)$, $1<p<\infty$.
Polyakov \cite{Polyakov-StMJ}, \cite{Polyakov-2020} also considered these operators.
Papanicolaou \cite{Pap1}~--~\cite{Pap3} studied the spectral properties
of the periodic Euler-Bernoulli equation.

Spectral asymptotics for higher-order operators are much less investigated. Numerous
results about the regular boundary value problems for these operators are expounded by
Naimark~\cite{Naimark}. High energy asymptotics of the eigenvalues for even-order
operators are determined by Akhmerova~\cite{Akhmerova}, Badanin and Korotyaev~\cite{BK-2012},
Mikhailets and Molyboga~\cite{<Molyboga_Mikhailets2>}, Polyakov~\cite{Polyakov-2016}.
Spectral properties of higher-order periodic operators are considered by Borisov and Golovina
\cite{Golovina}, \cite{Gol_Borisov}. Moreover, Borisov \cite{Borisov1}, \cite{Borisov2}
studied the eigenvalue asymptotics for periodic operators with distant perturbations.

The main goal of the present paper is to determine high energy eigenvalue asymptotics
for fourth-order operator $H$ on the unit interval.

The spectrum $\sigma(H)$ of the operator $H$ is pure discrete
(see~\cite[Ch.I.2, I.4]{Naimark}). We consider the differential equation
\begin{equation}\label{1.3}
y^{(4)}+(py')'+qy=\lambda y, \quad \lambda\in\mathbb{C}.
\end{equation}
Introduce the fundamental solutions $\varphi_j$, $j=1, 2, 3, 4$, of this equation
satisfying the conditions $\varphi^{(k-1)}_j(0, \lambda)=\delta_{jk}$, $k=1, 2, 3$,
$(\varphi_j'''+p\varphi_j')(0, \lambda)=\delta_{j4}$, where $\delta_{jk}$ is the Kronecker symbol.
Each $\varphi_j(x, \cdot)$, $x\in [0, 1]$, is an entire function.

The spectrum $\sigma(H)$ consists of real eigenvalues and satisfies
\[
\sigma(H)=\{\lambda\in\mathbb{C}: D(\lambda)=0\},
\]
where $D$ is an entire function given by
\begin{equation}\label{1.7}
D(\lambda)=-\det
\begin{pmatrix}
\varphi_1(1, \lambda) & \varphi_3(1, \lambda) \\
\varphi_1''(1, \lambda) & \varphi_3''(1, \lambda) \\
\end{pmatrix}, \quad \lambda\in\mathbb{C}.
\end{equation}

Consider the unperturbed operator $H_0y=y^{(4)}$.
Therefore, the function $D$ has the form $D_0(\lambda)=-\cos z\cos(iz)$
(see also~\eqref{3.5}), where
\[
z=\lambda^{1/4}, \quad \arg z\in\Big(-\frac{\pi}{4}, \frac{\pi}{4}\Big],
\quad \text{as} \quad \arg\lambda\in (-\pi, \pi].
\]
The eigenvalues of $H_0$ are the zeros of $D_0$ and have the form
\[
\mu_n^0=\big(\frac{\pi}{2}+\pi n\big)^4, \quad n\in\mathbb{Z}_+=\mathbb{N}\cup\{0\}.
\]
Note that the eigenvalues $\mu_n^0$ are simple.

We denote the eigenvalues of the perturbed problem by $\mu_n$, $n\in\mathbb{Z}_+$
(see details in Section~\ref{sec3.1}). Our main result is devoted to the high energy asymptotics
of these eigenvalues. We use the Birkhoff method (see~\cite{Birkhoff1},
\cite{Birkhoff2} and \cite{Naimark}) in the matrix form \cite{BK-20152}, \cite{BK-2019}
for the proof of our result. We give the asymptotics
for the coefficients $p$, $q\in L^1(\mathbb{T})$ in terms of Fourier coefficients
of $p$. In order to prove convergence of the series in the trace formula, we obtain these
asymptotics for smooth coefficients $p$ and $q$.

Introduce the Fourier coefficients
\begin{align*}
f_0 &= \int_0^1f(x)\,dx, \quad \widehat{f}_n=\int_0^1 f(x)e^{-i\pi(2n+1)x}\,dx, \\
\widehat{f}_{cn} &=\int_0^1f(x)\cos\pi(2n+1)x\,dx, \quad
\widehat{f}_{sn} =\int_0^1f(x)\sin\pi(2n+1)x\,dx, \quad n\in\mathbb{Z}.
\end{align*}
We formulate our main result.
\begin{theorem}\label{th1}
Let $p$, $q\in L^1(\mathbb{T})$ and let $n\in\mathbb{N}$ be large enough. Then the
eigenvalues $\mu_n$ are real and has algebraic multiplicity one. Moreover,
\begin{equation}\label{pq+}
\mu_n=\big(\frac{\pi}{2}+\pi n\big)^4+\big(\frac{\pi}{2}+\pi n\big)^2(\widehat{p}_{cn}-p_0)
+\mathcal{O}(n),
\end{equation}
as $n\to +\infty$. If, in addition, $p'''$, $q'\in L^1(\mathbb{T})$, then
\begin{equation}\label{p'''q'1}
\begin{aligned}
\mu_n &= \big(\frac{\pi}{2}+\pi n\big)^4+
\big(\frac{\pi}{2}+\pi n\big)^2\big(\widehat{p}_{cn}-p_0\big)
+\frac{p_0^2-\|p\|^2}{8}+q_0+\widehat{q}_{cn}+\mathcal{O}(n^{-2}),
\end{aligned}
\end{equation}
as $n\to +\infty$.
\end{theorem}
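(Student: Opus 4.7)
The plan is to apply the Birkhoff method to obtain uniform asymptotic expansions for the fundamental solutions $\varphi_j(x,\lambda)$ as $|\lambda|\to\infty$, substitute these into the determinant \eqref{1.7} to get an asymptotic formula for $D$, and then locate its zeros near $\mu_n^0=(\pi/2+\pi n)^4$ by a Rouché argument combined with a Newton-type correction. Throughout one works with the spectral parameter $z=\lambda^{1/4}$ and exploits that $D_0(\lambda)=-\cos z\cos(iz)$ has simple zeros at the half-integer multiples of $\pi$ and grows like $e^{|\mathrm{Im}(iz)|}/4$ on circles of radius $|z|=\pi(2n+1)/2+\pi/4$.

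First I would rewrite \eqref{1.3} as the first-order $4\times 4$ system $Y'=zA_0Y+A_1(x)Y+z^{-1}A_2(x)Y+\cdots$ using the quasi-derivatives $y$, $y'$, $y''$, $y'''+py'$, with $A_0=\mathrm{diag}(1,i,-1,-i)$ up to reordering. In each Stokes sector the Birkhoff theorem, in the matrix form used by Badanin--Korotyaev, produces a fundamental matrix of the shape
\[
Y(x,z)=\bigl(I+z^{-1}T_1(x)+z^{-2}T_2(x)+z^{-3}T_3(x)+\cdots\bigr)\exp\Bigl(zA_0x+\int_0^x B(s,z)\,ds\Bigr),
\]
where $T_k$ and the diagonal correction $B(s,z)$ are determined recursively from $p$ and $q$ and their derivatives. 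Re-expressing $\varphi_1,\varphi_3$ in terms of the four Birkhoff solutions via the initial conditions at $x=0$ and substituting into \eqref{1.7} yields
\[
D(\lambda)=-\cos z\cos(iz)+z^{-1}F_1(z)+z^{-2}F_2(z)+\cdots,
\]
where each $F_k(z)$ is a linear combination of $\cos z$, $\sin z$, $\cos(iz)$, $\sin(iz)$ with coefficients that are integrals of polynomials in $p,q$ against oscillating kernels of the form $e^{\pm 2izx}$, $e^{\pm(1\pm i)zx}$.

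Step two is to apply Rouché's theorem to $D=D_0+(D-D_0)$ on small disks around $\mu_n^0$: the lower bound on $|D_0|$ dominates the perturbative remainder for large $n$, giving one simple zero $\mu_n$ per disk; reality and simplicity also follow from self-adjointness of $H$. For the asymptotics, Newton iteration gives $\mu_n-\mu_n^0=-D(\mu_n^0)/D'(\mu_n^0)+O(|D(\mu_n^0)|^2)$. At $z_n^0=\pi(2n+1)/2$ the factor $\cos z$ vanishes, while $\cos(iz_n^0)$ and $\sin(iz_n^0)$ are exponentially large of comparable magnitude, so only the terms of $F_1$ multiplied by $\cos(iz)$ or $\sin(iz)$ survive. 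The surviving oscillating integrals collapse to $\int_0^1 p(x)\cos\pi(2n+1)x\,dx=\widehat p_{cn}$ (modulo the mean $p_0$), which after division by $D'(\mu_n^0)$ produces the claimed leading correction $(\pi/2+\pi n)^2(\widehat p_{cn}-p_0)$ and the $\mathcal O(n)$ remainder in \eqref{pq+} inherited from the unevaluated $z^{-2}F_2$.

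The refinement \eqref{p'''q'1} requires carrying the Birkhoff expansion two further orders, computing $T_2$ and $T_3$ explicitly, and iterating the Newton correction. Under the smoothness $p''',q'\in L^1(\mathbb{T})$, integration by parts gives $\widehat p_{cn}, \widehat q_{cn}=O(n^{-3})$ and $O(n^{-1})$ respectively, so the oscillating tails in $F_2(z_n^0)$ and $F_3(z_n^0)$ reduce to the non-oscillating contributions: $q_0$ and $\widehat q_{cn}$ from the $q$-dependent part, and the quadratic $(p_0^2-\|p\|^2)/8$ from integrals of $p\cdot p$ coming out of $T_2$ and from the second iterate of Newton's method applied to the leading correction. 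The main obstacle is precisely this bookkeeping at steps two and three of the expansion: identifying which products of Fourier-type integrals survive the exponential cancellations between the $\cos(iz)$ and $\sin(iz)$ branches, and verifying that the arithmetic of their coefficients is exactly $(p_0^2-\|p\|^2)/8+q_0+\widehat q_{cn}$. This is where the matrix form of the Birkhoff method is essential, as it systematises the diagonalisation at each order and keeps the off-diagonal remainders controllable in $L^1$-norm of the coefficients and their derivatives.
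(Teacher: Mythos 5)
Your overall strategy is the one the paper uses: Birkhoff-type asymptotics for a factorized fundamental matrix, a Rouch\'e argument for existence and simplicity of the zeros, and a perturbation expansion around $z_n^0=\pi/2+\pi n$ in the $z$-variable. The paper, however, does not perform Newton iteration on $D(\lambda)$ directly; it instead works with $\det\phi(z)$, first decomposing it into products of $2\times 2$ minors (Lemma~\ref{lh5.3}) so that only $\xi_1\xi_2+\xi_3\xi_4$ survives up to $\mathcal{O}(z^4)$, then factoring out the exponential as $4iz^6e^{-i\beta_\sigma z}\bigl(2\cos\alpha_\sigma z+e^{\mp i\alpha_\sigma z}\gamma_{\sigma,j}\bigr)$ and solving $\det\phi=0$ for the shift $\delta_n=z-z_n^0$ by successive bootstrapping through Lemmas~\ref{lh5.41}--\ref{thp'''q'}. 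Your Newton step is equivalent to that linearization in $\delta_n$, so this is a presentational rather than a mathematical difference, but it hides the crucial feature that the non-oscillating corrections are absorbed into a phase $\alpha_\sigma(z)$ multiplying $z$, not as a naive additive series $D_0+z^{-1}F_1+\cdots$.

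Two concrete points you should tighten. First, the claim that $\widehat p_{cn}=\mathcal O(n^{-3})$ under $p'''\in L^1(\mathbb T)$ is off by one order: integrating by parts twice leaves the boundary term $-2p'(0)/\pi^2(2n+1)^2$, so $\widehat p_{cn}=\mathcal O(n^{-2})$. This boundary term is exactly why the paper needs the identity $\widehat p_{sn}'''=2\pi(2n+1)p'(0)+\pi^3(2n+1)^3\widehat p_{cn}$ to recast Lemma~\ref{thp'''q'} into the form \eqref{p'''q'1}, and why $p'(0)$ appears explicitly at the intermediate stage \eqref{p''q} before cancelling. Second, the ``surviving oscillating integrals collapse to $\widehat p_{cn}$'' step glosses over a genuine subtlety: the $\gamma_{\sigma,j}$ in the paper contain additional $\mathcal O(1)$ terms of the form $\varkappa_{\sigma,n}=\int_0^1 e^{-\pi(2n+1)s}(\cdots)\,ds$, which are bounded but not small. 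These cancel in the end only because they appear symmetrically in $\gamma_{\sigma,1}$ and $\gamma_{\sigma,2}$ and can be grouped with $2\cos\alpha_\sigma z$ as a harmless multiplicative factor $\bigl(1+c\,\varkappa_{\sigma,n}/z^\sigma\bigr)$; one must verify this symmetry rather than assert that only $\widehat p_{cn}$ and $p_0$ survive. The remaining obstacle you correctly identify---producing the exact coefficient $(p_0^2-\|p\|^2)/8+q_0+\widehat q_{cn}$---is precisely the bulk of Sections~\ref{sec4}--\ref{sec6}, and the paper organizes it through an explicit cascade of gauge transformations $U_1,U_2,U_3$ that push the off-diagonal remainder to successive orders $z^{-2},z^{-3},z^{-4}$, which is the structured realization of ``carrying the Birkhoff expansion further.'' Your proposal is correct in outline but does not yet contain the computations that establish the specific numerical coefficients.
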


In summary, the results of our present paper are an essential step to obtain
the trace formula for operator $H$ and the first step for the solution of inverse spectral
problem for this operator.

The structure of the paper is as follows. Section \ref{sec2} contains some preliminary
results for the characteristic function $D$ and for the fundamental matrix
of equation \eqref{1.3}. Moreover, in this section we provide the general
conception of the Birkhoff method.
The eigenvalue asymptotics for the case $p$, $q\in L^1(\mathbb{T})$ is obtained in
Section~\ref{sec3}, for the case $p'$, $q\in L^1(\mathbb{T})$ in Section~\ref{sec4},
for the case $p''$, $q\in L^1(\mathbb{T})$ in Section~\ref{sec5},
for the case $p'''$, $q'\in L^1(\mathbb{T})$ in Section~\ref{sec6}. We prove the asymptotics
of the characteristic function $D$ in Appendix.

\section{Fundamental solutions}\label{sec2}
\subsection{Fundamental matrix}
Instead of the fundamental solutions $\varphi_j$, $j=1, 2, 3, 4$, considered in previous section,
we introduce the fundamental solutions $\phi_j$, $j=1, 2, 3, 4$, whose asymptotics are well
controlled. In order to describe these fundamental solutions, we need additional notation.

Recall that $z=\lambda^{1/4}$, $z\in\overline{\mathcal{Z}}$, $\lambda\in\mathbb{C}$, where
\[
\overline{\mathcal{Z}}=\bigg\{z\in\mathbb{C}: \mathrm{arg}\,z\in
\Big(-\frac{\pi}{4}, \frac{\pi}{4}\Big]\bigg\}, \quad
\mathcal{Z}=\bigg\{z\in\mathbb{C}: \mathrm{arg}\,z\in \Big(-\frac{\pi}{4},
\frac{\pi}{4}\Big)\bigg\}.
\]
If $\lambda\in\mathbb{C}_+$, then $z\in\mathcal{Z}_+$, where
\[
\mathcal{Z}_+=\bigg\{z\in\mathbb{C}: \mathrm{arg}\,z\in \Big(0, \frac{\pi}{4}\Big)\bigg\}.
\]
Introduce the numbers $\omega_1=-\omega_4=i$, $\omega_2=-\omega_3=1$.
Therefore, the following estimates hold
\[
\mathrm{Re}\,(i\omega_1z)\leqslant \mathrm{Re}\,(i\omega_2z)\leqslant
\mathrm{Re}\,(i\omega_3z)\leqslant\mathrm{Re}\,(i\omega_4z), \quad z\in\mathcal{Z}_+.
\]

Note that unperturbed equation~\eqref{1.3} with $p=q=0$ has the fundamental solutions
\begin{equation}\label{phi0def}
\phi_j^0(x, z)=e^{izx\omega_j}, \quad j=1, 2, 3, 4.
\end{equation}
Consider the perturbed equation \eqref{1.3}. Let $r>0$ be large enough and let
$z\in\mathcal{Z}_+(r)$, where
\[
\mathcal{Z}_+(r)=\{z\in\mathcal{Z}_+: |z|>r\}, \quad r>0.
\]
Then equation \eqref{1.3} has the fundamental solutions $\phi_j(x, z)$, $j=1, 2, 3, 4$,
$x\in[0, 1]$, $z\in\mathcal{Z}_+(r)$, satisfying the asymptotics
\begin{equation}\label{5.9}
\begin{aligned}
\phi_j(x, z) &= \phi_j^0(x, z)(1+\mathcal{O}(z^{-1})),
\quad \phi_j'(x, z)=(\phi_j^0)'(x, z)(1+\mathcal{O}(z^{-1})),\\
\phi_j''(x, z) &= (\phi_j^0)''(x, z)(1+\mathcal{O}(z^{-1})), \\
(\phi_j'''+p\phi_j')(x, z) &= \Big((\phi_j^0)'''+p(\phi_j^0)'\Big)(x, z)(1+\mathcal{O}(z^{-1})),
\end{aligned}
\end{equation}
as $|z|\to +\infty$, uniformly in $x\in [0, 1]$ (see~\cite{Naimark}).
Here and below the asymptotics for $|z|\to +\infty$ are uniform with respect
to $\mathrm{arg}\,z$ in the corresponding sectors.

Introduce the fundamental matrix $A(x, z)$, $x\in [0, 1]$, $z\in\mathcal{Z}_+(r)$,
of equation \eqref{1.3} by
\[
A=
\begin{pmatrix}
\phi_1 & \phi_2 & \phi_3 & \phi_4 \\
\phi_1' & \phi_2' & \phi_3' & \phi_4' \\
\phi_1'' & \phi_2'' & \phi_3'' & \phi_4'' \\
\phi_1'''+p\phi_1' & \phi_2'''+p\phi_2' & \phi_3'''+p\phi_1' & \phi_4'''+p\phi_4'
\end{pmatrix}.
\]
The matrix-valued function $A$ satisfies the equation
\begin{equation}\label{1.4}
A'=\mathcal{P}A, \quad \text{where} \quad
\mathcal{P}=
\begin{pmatrix}
0 & 1 & 0 & 0 \\
0 & 0 & 1 & 0 \\
0 & -p & 0 & 1 \\
\lambda-q & 0 & 0 & 0
\end{pmatrix}.
\end{equation}

Now we rewrite the determinant $D$, defined by \eqref{1.7}, in terms of the fundamental
solutions~$\phi$, where the matrix-valued function $\phi$ has the form
\begin{equation}\label{3.8}
\phi(z)=
\begin{pmatrix}
\phi_1'(0, z) & \phi_2'(0, z) & \phi_3'(0, z) & \phi_4'(0, z) \\
(\phi_1'''+p\phi_1')(0, z) & (\phi_2'''+p\phi_2')(0, z) & (\phi_3'''+p\phi_3')(0, z) &
(\phi_4'''+p\phi_4')(0, z) \\
\phi_1(1, z) & \phi_2(1, z) & \phi_3(1, z) & \phi_4(1, z) \\
\phi_1''(1, z) & \phi_2''(1, z) & \phi_3''(1, z) & \phi_4''(1, z)
\end{pmatrix}.
\end{equation}
The function $D$ given by \eqref{1.7} satisfies
\begin{equation}\label{3.9}
D(\lambda)=\frac{\det\phi(z)}{\det A(0, z)}, \quad \lambda=z^4.
\end{equation}
The proof of this formula repeats the arguments from \cite[Lemma~3.2]{BK-2019}. The function
$\det\phi$ is analytic in $\mathcal{Z}_+(r)$. Therefore, the function $D$ is entire and the
identity \eqref{3.9} can be extended analytically from $\mathcal{Z}_+(r)$ onto
the whole complex plane. Note that the function $\det\phi$ is characteristic function of the
spectrum of the operator $H$. But this function is not an entire function of the variable
$\lambda$. However, the function $\det\phi$ has a well controlled asymptotic behavior at
high energy.

Consider the unperturbed case $p=q=0$. Let $z\in\mathcal{Z}_+$. In this case the
fundamental matrix $A=A_0$ has the form
\begin{equation}\label{a0}
A_0=
\begin{pmatrix}
\phi_1^0 & \phi_2^0 & \phi_3^0 & \phi_4^0 \\
(\phi_1^0)' & (\phi_2^0)' & (\phi_3^0)' & (\phi_4^0)' \\
(\phi_1^0)'' & (\phi_2^0)'' & (\phi_3^0)'' & (\phi_4^0)'' \\
(\phi_1^0)''' & (\phi_2^0)''' & (\phi_3^0)''' & (\phi_4^0)'''
\end{pmatrix}=
\Omega Y_0, \qquad Y_0=\mathrm{diag}\,(\phi_1^0, \phi_2^0, \phi_3^0, \phi_4^0),
\end{equation}
where $\phi_j^0$, $j=1, 2, 3, 4$, are given by \eqref{phi0def} and
\begin{equation}\label{2.3}
\Omega=
\begin{pmatrix}
1 & 1 & 1 & 1 \\
-z & iz & -iz & z \\
z^2 & -z^2 & -z^2 & z^2 \\
-z^3 & -iz^3 & iz^3 & z^3
\end{pmatrix}.
\end{equation}
Moreover, the function $A$ has the following asymptotics
$A(x, z)=A_0(x, z)(\mathbb{I}_4+\mathcal{O}(z^{-1}))$. Here and below
$\mathbb{I}_4$ is the $4\times 4$ identity matrix.
Using the identity $\phi_1^0\phi_2^0\phi_3^0\phi_4^0=1$, we obtain
\begin{equation}\label{deta0}
\det A_0(0, z)=\det\Omega=-16iz^6.
\end{equation}
The matrix-valued function $\phi=\phi_0$ has the form
\begin{equation}\label{phi0}
\begin{aligned}
\phi_0(z) &=
\begin{pmatrix}
(\phi_1^0)'(0, z) & (\phi_2^0)'(0, z) & (\phi_3^0)'(0, z) & (\phi_4^0)'(0, z) \\
(\phi_1^0)'''(0, z) & (\phi_2^0)'''(0, z) & (\phi_3^0)'''(0, z) & (\phi_4^0)'''(0, z) \\
\phi_1^0(1, z) & \phi_2^0(1, z) & \phi_3^0(1, z) & \phi_4^0(1, z) \\
(\phi_1^0)''(1, z) & (\phi_2^0)''(1, z) & (\phi_3^0)''(1, z) & (\phi_4^0)''(1, z)
\end{pmatrix}=
\begin{pmatrix}
-z & iz & -iz & z \\
-z^3 & -iz^3 & iz^3 & z^3 \\
e^{-z} & e^{iz} & e^{-iz} & e^{z} \\
z^2e^{-z} & -z^2e^{iz} & -z^2e^{-iz} & z^2e^{z}
\end{pmatrix}.
\end{aligned}
\end{equation}
Then the function $\det\phi_0$ satisfies
\begin{equation}\label{xi0}
\det\phi_0(z)= 16iz^6\cos z\cos(iz).
\end{equation}
Then the identities \eqref{3.9} and \eqref{deta0} give
\begin{equation}\label{3.5}
D_0(\lambda) = -\cos z\cos(iz).
\end{equation}

We analyse equation \eqref{1.3} by using the matrix form of the Birkhoff method of
asymptotic analysis of higher-order equations (see \cite{BK-20152}, \cite{BK-2019}).
This method is a peculiar inversion of the operator on the left side of equation \eqref{1.4}.
This inversion is carried out in such a way that the result is an integral equation with a
contracting kernel. This integral equation is solved by iterations.
But one of the main problem is that the matrix coefficient on the right side of equation
\eqref{1.4} grows in the $z$-variable.

Now we transform equation \eqref{1.4} into equation \eqref{5.5}, where the left side
contains a diagonal coefficient and the right side contains a decreasing matrix coefficient.
We will carry out this transformation in the next lemma. But first we introduce the
necessary subjects.

Define the matrix-valued function $Y_1(x, z)$ by
\begin{equation}\label{2.5}
A(x, z)=\Omega(z)Y_1(x, z), \quad (x, z)\in [0, 1]\times\mathcal{Z}_+(r).
\end{equation}
Note that in the unperturbed case $Y_1=Y_0$, where $Y_0$, given by \eqref{a0}, satisfies
$Y_0'-iz\mathcal{T} Y_0=0$. Here and below
\begin{equation}\label{2.2}
\mathcal{T}=\mathrm{diag}\,(\omega_1, \omega_2, \omega_3, \omega_4)=\mathrm{diag}\,(i, 1, -1, -i).
\end{equation}
Note that $Y_1$ is a unique solution of equation \eqref{5.5} satisfying the asymptotics
$Y_1(x, z)=Y_0(x, z)(\mathbb{I}_4+\mathcal{O}(z^{-1}))$.
\begin{lemma}
Let $p$, $q\in L^1(\mathbb{T})$ and let $z\in\mathcal{Z}_+(r)$, where $r>0$ is large enough.
Then the matrix-valued function $Y_1$, given by \eqref{2.5}, satisfies the equation
\begin{equation}\label{5.5}
Y_1'-izT_1Y_1=\frac{1}{z}\Phi_1Y_1,
\end{equation}
where the matrix-valued functions $T_1$ and $\Phi_1$ have the form
\begin{equation}\label{5.3}
T_1=\mathcal{T}+\frac{p}{4z^2}\mathcal{T}^3,
\end{equation}
\begin{equation}\label{Phi1}
\Phi_1=F_1-\frac{q}{4z^2}Q, \qquad \qquad
F_1=-\frac{p}{4}(P+i\mathcal{T}^3),
\end{equation}
with
\begin{equation}\label{4.2}
P=
\begin{pmatrix}
-1 & i & -i & 1 \\
1 & -i & i & -1 \\
1 & -i & i & -1 \\
-1 & i & -i & 1
\end{pmatrix}, \qquad \qquad
Q=\begin{pmatrix}
-1 & -1 & -1 & -1 \\
i & i & i & i \\
-i & -i & -i & -i \\
1 & 1 & 1 & 1
\end{pmatrix}.
\end{equation}
\end{lemma}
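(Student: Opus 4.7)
\emph{Plan.} The starting point is the master equation $A' = \mathcal{P}A$ from~\eqref{1.4}. Substituting the factorization $A(x,z) = \Omega(z)Y_1(x,z)$ and using that $\Omega$ is $x$-independent, one obtains at once $Y_1' = \Omega^{-1}\mathcal{P}\Omega\,Y_1$, so the whole lemma reduces to an explicit calculation of the conjugated matrix $\Omega^{-1}\mathcal{P}\Omega$. I would split $\mathcal{P} = \mathcal{P}_0 + V$, where $\mathcal{P}_0$ is the unperturbed companion matrix (obtained by setting $p = q = 0$) and $V$ has only two nonzero entries, namely $V_{32} = -p$ and $V_{41} = -q$. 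The contribution of $\mathcal{P}_0$ is free: since $A_0 = \Omega Y_0$ with $Y_0' = iz\mathcal{T}Y_0$ and $A_0' = \mathcal{P}_0 A_0$, one reads off $\Omega^{-1}\mathcal{P}_0\Omega = iz\mathcal{T}$.

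The bulk of the work is therefore the computation of $\Omega^{-1}V\Omega$. I would exploit the Vandermonde structure $\Omega_{kj} = (iz\omega_j)^{k-1}$ together with the fact that $\{\omega_1,\omega_2,\omega_3,\omega_4\} = \{i,1,-1,-i\}$ are the four fourth roots of unity. The character orthogonality $\sum_{j=1}^4 \omega_j^m = 4$ for $m \equiv 0 \pmod 4$ and $0$ otherwise immediately gives the clean inversion formula $(\Omega^{-1})_{kl} = \tfrac{1}{4}(iz\omega_k)^{1-l}$. Since $V$ is supported at positions $(3,2)$ and $(4,1)$, the product $V\Omega$ has only two nonzero rows, namely row $3$ equal to $-ipz(\omega_1,\omega_2,\omega_3,\omega_4)$ and row $4$ equal to $-q(1,1,1,1)$. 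A direct two-term sum then gives
\[
(\Omega^{-1}V\Omega)_{kj} = (\Omega^{-1})_{k3}(-ipz\omega_j) + (\Omega^{-1})_{k4}(-q) = \frac{ip\,\omega_j\omega_k^{-2}}{4z} - \frac{iq\,\omega_k^{-3}}{4z^3},
\]
so $\Omega^{-1}V\Omega = -\tfrac{p}{4z}P - \tfrac{q}{4z^3}Q$ with $P_{kj} = -i\omega_j\omega_k^{-2}$ and $Q_{kj} = i\omega_k^{-3}$. A short row-by-row check using the explicit values of $\omega_k$ confirms these match the matrices $P$ and $Q$ displayed in~\eqref{4.2}.

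The final step is purely cosmetic: to cast the result in the form~\eqref{5.5} I would add and subtract $\tfrac{ip}{4z}\mathcal{T}^3$. The piece $iz\mathcal{T} + \tfrac{ip}{4z}\mathcal{T}^3$ regroups as $izT_1$ with $T_1 = \mathcal{T} + \tfrac{p}{4z^2}\mathcal{T}^3$, while the leftover $-\tfrac{p}{4z}P - \tfrac{ip}{4z}\mathcal{T}^3 - \tfrac{q}{4z^3}Q$ regroups as $\tfrac{1}{z}\Phi_1$ with $\Phi_1 = -\tfrac{p}{4}(P + i\mathcal{T}^3) - \tfrac{q}{4z^2}Q$. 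The main obstacle throughout is pure bookkeeping, in particular verifying the Vandermonde inversion formula and tracking the signs of $\omega_k^{-2}$ and $\omega_k^{-3}$ without arithmetic slips; no analytic ingredient beyond the algebra of fourth roots of unity is required.
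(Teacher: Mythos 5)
Your proposal is correct and takes essentially the same route as the paper: substitute $A = \Omega Y_1$ into $A' = \mathcal{P}A$, compute $\Omega^{-1}\mathcal{P}\Omega = iz\mathcal{T} - \tfrac{p}{4z}P - \tfrac{q}{4z^3}Q$, and then add and subtract $\tfrac{ip}{4z}\mathcal{T}^3$ to regroup into the stated form. The only difference is that the paper asserts the conjugation identity without derivation, whereas you supply a clean proof of it via the Vandermonde inversion $(\Omega^{-1})_{kl} = \tfrac14(iz\omega_k)^{1-l}$ and the two-term perturbation $V$; your formulas $P_{kj} = -i\omega_j\omega_k^{-2}$ and $Q_{kj} = i\omega_k^{-3}$ do reproduce the matrices in~\eqref{4.2}.
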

\begin{proof}
Substituting \eqref{2.5} into equation \eqref{1.4} and using the identity
\[
\Omega^{-1}\mathcal{P}\Omega=iz\mathcal{T}-\frac{p}{4z}P-\frac{q}{4z^3}Q,
\]
we obtain the following equation for the matrix-valued function $Y_1$:
\[
Y_1'-iz\mathcal{T}Y_1=-\frac{1}{4z}\Big(pP+\frac{q}{z^2}Q\Big)Y_1.
\]
Adding the term $-ip\mathcal{T}^3Y_1/(4z)$ to both sides of this equation we obtain \eqref{5.5}.
\end{proof}

Now we consider the smooth coefficients $p'$, $q\in L^1(\mathbb{T})$. In this case,
we may reduce equation~\eqref{5.5} to equation~\eqref{8.4'}, where the matrix
coefficient on the right side decreases as $z^{-2}$.
In order to obtain these results we use the standard methods (see~\cite[Ch.~V.1.3]{Fedoryuk}).
We introduce a new unknown matrix-valued function $Y_2(x, z)$ by
\begin{equation}\label{8.3}
Y_1(x, z)=U_1(x, z)Y_2(x, z), \quad (x, z)\in [0, 1]\times\mathcal{Z}_+(r),
\end{equation}
where $Y_1$ is the solution of equation \eqref{5.5}. Consider the matrix-valued
function $U_1$ in the form
\begin{equation}\label{8.1}
U_1(x, z)=\mathbb{I}_4+\frac{1}{z^2}\mathcal{W}_2(x, z), \quad x\in [0, 1].
\end{equation}
where $\mathcal{W}_2$ has the form
\begin{equation}\label{mathcalW1}
\mathcal{W}_2=-pW_1
\end{equation}
for some matrix $W_1$. We choose the matrix $U_1$ so that the coefficient on the right side of
equation \eqref{8.4'} decreases as $z^{-2}$. It turns out that the matrix $W_1$ must
satisfies the following identities (see the proof of Lemma~\ref{lh8.1'''} below):
\begin{equation}\label{W1}
P+4i[\mathcal{T}, W_1]=-i\mathcal{T}^3, \qquad \qquad [P, W_1]-4iW_1[\mathcal{T}, W_1]
=\frac{1}{8}\mathcal{Q},
\end{equation}
where $[A, B]=AB-BA$ and
\begin{equation}\label{Q}
\mathcal{Q}=
\begin{pmatrix}
-1 & -1 & -1 & 0 \\
i & i & 0 & i \\
-i & 0 & -i & -i \\
0 & 1 & 1 & 1
\end{pmatrix}.
\end{equation}
Direct calculations give
\begin{equation}\label{6.2}
W_1=\frac{1}{8}
\begin{pmatrix}
0 & 1+i & 1-i & 1 \\
-1+i & 0 & -1 & -1-i \\
-1-i & -1 & 0 & -1+i \\
1 & 1-i & 1+i & 0
\end{pmatrix}.
\end{equation}
We have the following result.
\begin{lemma}\label{lhp'q}
Let $p'$, $q\in L^1(\mathbb{T})$ and let $z\in\mathcal{Z}_+(r)$, where $r>0$ is large
enough. Then the matrix-valued function $Y_2(x, z)$, $x\in [0, 1]$, given by \eqref{8.3},
satisfies the equation
\begin{equation}\label{8.4'}
Y_2'-izT_2Y_2=\frac{1}{z^2}\Phi_2Y_2,
\end{equation}
where $T_2=T_1$ is defined by \eqref{5.3},
\begin{equation}\label{phi2}
\Phi_2= F_2+\mathcal{O}(z^{-1}), \qquad \qquad F_2=p'W_1,
\end{equation}
as $|z|\to +\infty$, uniformly in $x\in [0, 1]$, and the matrix $W_1$ is given by \eqref{6.2}.
\end{lemma}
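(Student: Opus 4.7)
The plan is to substitute $Y_1 = U_1 Y_2$ into equation~\eqref{5.5} and expand everything in powers of $z^{-1}$. Since $p' \in L^1(\mathbb{T})$ and $p$ is $1$-periodic, $p$ is absolutely continuous and bounded, so $U_1 = \mathbb{I}_4 - \frac{p}{z^2}W_1$ is invertible for $z \in \mathcal{Z}_+(r)$ with $r$ sufficiently large, and $Y_2$ is well defined on $[0,1]\times\mathcal{Z}_+(r)$. A direct computation, using $T_2 = T_1$, gives
\[
Y_2' - iz T_1 Y_2 = U_1^{-1}\bigl(iz[T_1, U_1] - U_1' + z^{-1}\Phi_1 U_1\bigr)Y_2,
\]
so it suffices to verify that the bracket on the right equals $\frac{p'}{z^2}W_1 + \mathcal{O}(z^{-3})$; multiplication by $U_1^{-1} = \mathbb{I}_4 + \mathcal{O}(z^{-2})$ and then by $z^2$ will deliver $\Phi_2 = p'W_1 + \mathcal{O}(z^{-1})$, as required by \eqref{8.4'}--\eqref{phi2}.

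I would evaluate each summand separately. First, $-U_1' = \frac{p'}{z^2}W_1$ produces the sought leading term. Second, $T_1 = \mathcal{T} + \frac{p}{4z^2}\mathcal{T}^3$ together with $\mathcal{W}_2 = -pW_1$ yields
\[
iz[T_1, U_1] = -\frac{ip}{z}[\mathcal{T}, W_1] - \frac{ip^2}{4z^3}[\mathcal{T}^3, W_1].
\]
Third, from $\Phi_1 = -\frac{p}{4}(P+i\mathcal{T}^3) - \frac{q}{4z^2}Q$ one finds
\[
\frac{1}{z}\Phi_1 U_1 = -\frac{p}{4z}(P + i\mathcal{T}^3) + \mathcal{O}(z^{-3}).
\]
The order-$z^{-1}$ contributions combine to $-\frac{p}{4z}\bigl(P + i\mathcal{T}^3 + 4i[\mathcal{T}, W_1]\bigr)$, which vanishes identically by the first identity in \eqref{W1}. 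All remaining contributions are $\mathcal{O}(z^{-3})$, uniformly in $x\in[0,1]$, using the boundedness of $p$ and the $L^1$ control on $q$. Multiplying the resulting $R = \frac{p'}{z^2}W_1 + \mathcal{O}(z^{-3})$ by $U_1^{-1}$ and then by $z^{2}$ gives the claimed form of $\Phi_2$.

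Before executing the above, I would also check that the explicit $W_1$ in \eqref{6.2} does satisfy the first identity in \eqref{W1}. Since $[\mathcal{T}, M]_{jk} = (\omega_j - \omega_k)M_{jk}$ has zero diagonal, the identity is solvable only because the diagonal of $P + i\mathcal{T}^3$ vanishes (a direct entrywise verification using $\mathcal{T}^3 = \mathrm{diag}(-i,1,-1,i)$ and the diagonal of $P$), and then the off-diagonal entries of $W_1$ are uniquely determined by division. The diagonal entries of $W_1$ remain free at this stage and are chosen so that the second identity in \eqref{W1} holds; that identity is not needed here but will enter in the next reduction step used for smoother coefficients. The only real obstacle in the proof is careful bookkeeping of $z^{-1}$-orders in the products $\Phi_1 U_1$, $[T_1, U_1]$ and $U_1^{-1} R$, ensuring that all remainders are uniformly $\mathcal{O}(z^{-3})$ in $x$.
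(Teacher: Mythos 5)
Your proposal is correct and uses the same method the paper relies on: substitute $Y_1=U_1Y_2$ into \eqref{5.5}, expand $U_1^{-1}(A_1U_1-U_1')$ in powers of $z^{-1}$, and use the first identity in \eqref{W1} to cancel the $z^{-1}$ contributions, leaving the $-U_1'=p'W_1/z^2$ term as the leading part of $\Phi_2/z^2$. The paper only writes out this computation for the deepest case (Lemma~\ref{lh8.1'''}) and asserts that the present lemma follows by the same argument truncated at lower order, which is precisely the calculation you carried out.
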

We will prove the statement of this lemma in more general case in Lemma~\ref{lh8.1'''}.

Now we consider the smooth coefficients $p''$, $q\in L^1(\mathbb{T})$. In this case
we may reduce equation~\eqref{5.5} to equation~\eqref{8.4''}, where the matrix coefficient
on the right side decreases as~$z^{-3}$. We introduce a new unknown matrix-valued
function $Y_3(x, z)$ by
\begin{equation}\label{8.3''}
Y_1(x, z)=U_2(x, z)Y_3(x, z), \quad (x, z)\in [0, 1]\times \mathcal{Z}_+(r),
\end{equation}
where $Y_1$ is the solution of equation \eqref{5.5}. Consider the matrix-valued function
$U_2$ in the form
\begin{equation}\label{8.1''}
U_2(x, z)=\mathbb{I}_4+\frac{1}{z^2}\mathcal{W}_3(x, z), \quad x\in [0, 1],
\end{equation}
where $\mathcal{W}_3$ has the form
\begin{equation}\label{mathcalW2}
\mathcal{W}_3=-pW_1-\frac{p'}{z}W_2
\end{equation}
for some matrix $W_2$. Recall that the matrix $W_1$ has the form \eqref{6.2}.
We choose the matrix $U_2$ so that the coefficient on the right side of equation
\eqref{8.4''} decreases as $z^{-3}$. It turns out that the matrix $W_2$ must
satisfies the following identity (see the proof of Lemma~\ref{lh8.1'''} below):
\begin{equation}\label{W2}
\quad W_1+i[W_2, \mathcal{T}]=0.
\end{equation}
Direct calculations give
\begin{equation}\label{6.2'}
W_2=\frac{1}{16}
\begin{pmatrix}
0 & -2 & -2 & -1 \\
2i & 0 & i & 2i \\
-2i & -i & 0 & -2i \\
1 & 2 & 2 & 0
\end{pmatrix}.
\end{equation}
We have the following result.
\begin{lemma}\label{lhp''q}
Let $p''$, $q\in L^1(\mathbb{T})$ and let $z\in\mathcal{Z}_+(r)$, where $r>0$ is
large enough. Then the matrix-valued function $Y_3$, given by \eqref{8.3''}, satisfies
the equation
\begin{equation}\label{8.4''}
Y_3'-izT_3Y_3=\frac{1}{z^3}\Phi_3Y_3,
\end{equation}
where
\begin{flalign}
T_3 &= \mathcal{T}+\frac{p}{4z^2}\mathcal{T}^3-\frac{q}{4z^4}\mathcal{T}+
\frac{p^2}{32z^4}\mathcal{T},\label{t3''}\\
\Phi_3 &= F_3+\mathcal{O}(z^{-1}), \qquad
F_3=p''W_2-\frac{q}{4}(Q-i\mathcal{T})+\frac{p^2}{32}(\mathcal{Q}-i\mathcal{T}),\label{8.5''}
\end{flalign}
as $|z|\to +\infty$, uniformly in $x\in [0, 1]$, the matrices $Q$, $\mathcal{Q}$,
and $W_2$ have the form \eqref{4.2}, \eqref{Q}, and \eqref{6.2'}, respectively.
\end{lemma}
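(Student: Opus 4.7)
My plan is to follow the same conjugation scheme that underlies Lemma~\ref{lhp'q}: substitute the ansatz \eqref{8.3''} into \eqref{5.5} and adjust $\mathcal{W}_3$ so that the transformed right-hand side is of order $z^{-3}$. The additional summand $-(p'/z)W_2$ in $\mathcal{W}_3$, absent in Lemma~\ref{lhp'q}, is precisely what is needed to kill the residual $p'W_1$ that appeared in $\Phi_2$; the price is a new contribution $p''W_2$ coming from $U_2'$, together with a further off-diagonal remainder at order $z^{-3}$.

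Concretely, I would substitute $Y_1 = U_2 Y_3$ with $U_2 = \mathbb{I}_4 + z^{-2}\mathcal{W}_3$ into $Y_1' - izT_1 Y_1 = z^{-1}\Phi_1 Y_1$ and rearrange to
\[
U_2 Y_3' = \bigl(izT_1 U_2 - U_2' + z^{-1}\Phi_1 U_2\bigr)Y_3.
\]
Since $|z|$ is large, $U_2$ is invertible with Neumann expansion $U_2^{-1} = \mathbb{I}_4 - z^{-2}\mathcal{W}_3 + z^{-4}\mathcal{W}_3^2 - \cdots$. Expanding $U_2^{-1}\bigl(izT_1 U_2 - U_2' + z^{-1}\Phi_1 U_2\bigr)$ in powers of $z^{-1}$ and comparing with $izT_3 + z^{-3}\Phi_3$ gives a system of matrix identities, one at each order of $z^{-1}$, which I would solve in turn.

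The coefficients at orders $z^{1}$ and $z^{0}$ reproduce the first two terms of $izT_3$, namely $iz\mathcal{T}$ and $(ip/4z)\mathcal{T}^3$. Cancellation at order $z^{-1}$ amounts to the cohomological equation $P + 4i[\mathcal{T},W_1] = -i\mathcal{T}^3$, i.e.\ the first identity in \eqref{W1}; since $\mathcal{T}$ has four distinct eigenvalues, $[\mathcal{T},\cdot]$ is a bijection on off-diagonal matrices, and one checks that \eqref{6.2} furnishes its solution. Cancellation at order $z^{-2}$ produces in the same way the identity $W_1 + i[W_2,\mathcal{T}] = 0$ of \eqref{W2}, solved by \eqref{6.2'}. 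At order $z^{-3}$, the $p^2$-contribution $[P,W_1] - 4iW_1[\mathcal{T},W_1]$ surfaces and is rewritten via the second identity in \eqref{W1} as $\tfrac{1}{8}\mathcal{Q}$. A diagonal/off-diagonal decomposition then assigns the diagonal pieces $-(iq/4)\mathcal{T}$ and $(ip^2/32)\mathcal{T}$ to $izT_3$, yielding the last two summands of \eqref{t3''}, while the surviving off-diagonal remainder together with the $p''W_2$ term produced by $U_2'$ constitutes $F_3$ as given in \eqref{8.5''}. Contributions of order $z^{-4}$ or smaller, arising from the tail of the Neumann series for $U_2^{-1}$ and from the $z^{-2}q$ term of $\Phi_1$, are absorbed into the $\mathcal{O}(z^{-1})$ error inside $\Phi_3$.

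The main obstacle is the bookkeeping at order $z^{-3}$. The cohomological equations \eqref{W1} and \eqref{W2} determine $W_1$ and $W_2$ only on the off-diagonal subspace (since $[\mathcal{T},\cdot]$ annihilates diagonal matrices), so one must separate the diagonal and off-diagonal components of the $z^{-3}$ residual to see that the diagonal part is exactly what forces the corrections $-q/(4z^4)\mathcal{T}$ and $p^2/(32z^4)\mathcal{T}$ to appear inside $T_3$. Once this splitting is handled carefully, the identification of $F_3$ reduces to direct matrix algebra with the explicit $P$, $Q$, $\mathcal{Q}$, $\mathcal{T}$, $W_1$, and $W_2$.
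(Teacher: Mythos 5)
Your proposal is correct and takes essentially the same conjugation-and-Neumann-expansion route that the paper uses in the proof of the more general Lemma~\ref{lh8.1'''}, to which the paper explicitly defers the proof of Lemma~\ref{lhp''q}: substitute $Y_1=U_2Y_3$, expand $U_2^{-1}(A_1U_2-U_2')$ in powers of $z^{-1}$, invoke the identities \eqref{W1} and \eqref{W2} at orders $z^{-1}$ and $z^{-2}$, and split the order-$z^{-3}$ residual into its diagonal part (going into $T_3$, since $Q_{\mathrm{diag}}=\mathcal{Q}_{\mathrm{diag}}=i\mathcal{T}$ and $(W_2)_{\mathrm{diag}}=0$) and off-diagonal part (giving $F_3$). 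One small bookkeeping slip: the expansion has no $z^{0}$ term, since $A_1=iz\mathcal{T}-\tfrac{p}{4z}P-\tfrac{q}{4z^3}Q$ jumps from $z^{1}$ to $z^{-1}$ and $-z^{-2}\bigl([\mathcal{W}_3,A_1]+\mathcal{W}_3'\bigr)$ also starts at $z^{-1}$, so the summand $\tfrac{ip}{4z}\mathcal{T}^3$ of $izT_3$ lives at order $z^{-1}$ and is itself the output of the first identity in \eqref{W1} rather than a contribution separate from it.
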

We will prove the statement of this lemma in more general case in Lemma~\ref{lh8.1'''}.

Now we consider the smooth coefficients $p'''$, $q'\in L^1(\mathbb{T})$. In this case
we may reduce equation~\eqref{5.5} to equation~\eqref{8.4'''}, where the matrix
coefficient on the right side decreases as~$z^{-4}$. We introduce a new unknown
matrix-valued function $Y_4(x, z)$ by
\begin{equation}\label{8.3'''}
Y_1(x, z)=U_3(x, z)Y_4(x, z), \quad (x, z)\in [0, 1]\times \mathcal{Z}_+(r),
\end{equation}
where $Y_1$ is the solution of equation \eqref{5.5}. Consider the matrix-valued
function $U_3$ in the form
\begin{equation}\label{8.1'''}
U_3(x, z)=\mathbb{I}_4+\frac{1}{z^2}\mathcal{W}_4(x, z), \quad x\in [0, 1],
\end{equation}
where $\mathcal{W}_4$ has the form
\begin{equation}\label{mathcalW3}
\mathcal{W}_4=-pW_1-\frac{p'}{z}W_2-\frac{1}{z^2}W_3
\end{equation}
for some matrix $W_3$. The matrices $W_1$ and $W_2$ have the form \eqref{6.2} and
\eqref{6.2'}, respectively. We choose the matrix $U_3$ so that the coefficient on the
right side of equation \eqref{8.4'''} decreases as $z^{-4}$. It turns out that the
matrices $W_1$, $W_2$ and $W_3$ must satisfy the following identity
(see the proof of Lemma~\ref{lh8.1'''} below):
\begin{equation}\label{W3W1'''}
i[W_3, \mathcal{T}]+\frac{p^2}{4}\Big([P, W_1]-4iW_1[\mathcal{T}, W_1]\Big)
-\frac{q}{4}Q+p''W_2=\Big(\frac{ip^2}{32}-\frac{iq}{4}\Big)\mathcal{T}.
\end{equation}
Direct calculations give
\begin{equation}\label{8.2'''}
W_3=\frac{p''}{32}Q_1+\frac{q}{8}Q_2+\frac{p^2}{64}Q_3,
\end{equation}
where
\begin{equation}\label{q12'''}
Q_1=
\begin{pmatrix}
0 & 2-2i & 2+2i & 1 \\
2+2i & 0 & 1 & 2-2i \\
2-2i & 1 & 0 & 2+2i \\
1 & 2+2i & 2-2i & 0
\end{pmatrix}, \;
Q_2=
\begin{pmatrix}
2 & -1+i & -1-i & -1 \\
-1-i & -2i & -1 & -1+i \\
-1+i & -1 & 2i & -1-i \\
-1 & -1-i & -1+i & -2
\end{pmatrix}, \quad
\end{equation}
\[
Q_3=
\begin{pmatrix}
-2 & 1-i & 1+i & 0 \\
1+i & 2i & 0 & 1-i \\
1-i & 0 & -2i & 1+i \\
0 & 1+i & 1-i & 2
\end{pmatrix}.
\]
We have the following result.
\begin{lemma}\label{lh8.1'''}
Let $p'''$, $q'\in L^1(\mathbb{T})$ and let $z\in\mathcal{Z}_+(r)$, where $r>0$ is large enough.
Then the matrix-valued function $Y_4$, given by \eqref{8.3'''},
satisfies the equation
\begin{equation}\label{8.4'''}
Y_4'-izT_4Y_4=\frac{1}{z^4}\Phi_4Y_4,
\end{equation}
where
\begin{flalign}
T_4 &=\mathcal{T}+\frac{p}{4z^2}\mathcal{T}^3-\frac{q}{4z^4}\mathcal{T}+
\frac{p^2}{32z^4}\mathcal{T}+
\frac{q'}{4z^5}\mathcal{T}+\frac{ipp'}{64z^5}(-3\mathbb{I}_4+4i\mathcal{T}),\label{t3'''}\\
\Phi_4 &= F_4+\mathcal{O}(z^{-1}), \qquad
F_4=\frac{p'''}{32}Q_1+\frac{q'}{8}(Q_2+2i\mathcal{T})
+\frac{pp'}{64}(Q_4+3\mathbb{I}_4-4i\mathcal{T}),\label{8.5'''}
\end{flalign}
as $|z|\to +\infty$, uniformly in $x\in [0, 1]$, the matrices $Q_1$, $Q_2$
are given by \eqref{q12'''}, and
\[
Q_4=
\begin{pmatrix}
-7 & 5-4i & 5+4i & -2 \\
5+4i & -3+4i & -2 & 5-4i \\
5-4i & -2 & -3-4i & 5+4i \\
-2 & 5+4i & 5-4i & 1
\end{pmatrix}.
\]
\end{lemma}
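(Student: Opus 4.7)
The plan is to iterate the Birkhoff normalization one more step beyond Lemma~\ref{lhp''q}. Substituting $Y_1 = U_3 Y_4$ into equation~\eqref{5.5} one finds
\[
Y_4' = U_3^{-1}\Big(izT_1 U_3 - U_3' + \frac{1}{z}\Phi_1 U_3\Big)Y_4,
\]
and the task reduces to a formal Laurent expansion in $z^{-1}$ of the matrix in brackets, combined with $U_3^{-1} = \mathbb{I}_4 - z^{-2}\mathcal{W}_4 + z^{-4}\mathcal{W}_4^2 + \mathcal{O}(z^{-6})$. Inserting $\mathcal{W}_4 = -pW_1 - p'z^{-1}W_2 - z^{-2}W_3$ together with the explicit forms of $T_1$ and $\Phi_1$ from \eqref{5.3}--\eqref{Phi1}, I would collect coefficients at orders $z^{-1}, z^{-2}, z^{-3}$ and demand that the off-diagonal part at each of these orders vanishes, while the diagonal part is absorbed into $izT_4$.

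\medskip

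At each order, the off-diagonal cancellation reduces to a commutator equation $i[\mathcal{T}, W_j] = M_j$, with $M_j$ a known matrix whose diagonal must vanish for solvability. At order $z^{-1}$ this yields the first identity of~\eqref{W1}, solved by $W_1$ in~\eqref{6.2}; at order $z^{-2}$, equation~\eqref{W2}, solved by $W_2$ in~\eqref{6.2'}; at order $z^{-3}$, equation~\eqref{W3W1'''}, solved by $W_3$ in~\eqref{8.2'''}. Since the eigenvalues of $\mathcal{T}$ are distinct, the off-diagonal part of $W_j$ is uniquely determined by such an equation, and I would verify directly that the listed explicit matrices solve these identities. The diagonal parts, combined with cross products from $U_3^{-1}(izT_1 U_3 - U_3')$ and from the $T_1$-expansion, assemble into the full formula~\eqref{t3'''} for $T_4$, including the corrections $-(q/(4z^4))\mathcal{T} + (p^2/(32z^4))\mathcal{T}$ at order $z^{-4}$ and $(q'/(4z^5))\mathcal{T} + (ipp'/(64z^5))(-3\mathbb{I}_4+4i\mathcal{T})$ at order $z^{-5}$.

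\medskip

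The leading $z^{-4}$ off-diagonal residual then gives $F_4$. Its sources are threefold: first, $-U_3'$, which at order $z^{-4}$ equals $z^{-4} W_3'$ and, after differentiating~\eqref{8.2'''}, yields the term $(p'''/32)Q_1 + (q'/8)Q_2 + (pp'/32)Q_3$; second, $U_3^{-1}\cdot z^{-1}\Phi_1 U_3$ at order $z^{-4}$, which brings in an $F_1\mathcal{W}_4$ bilinear piece together with a $q'Q$ correction; and third, $U_3^{-1}\cdot izT_1 U_3$ at order $z^{-4}$, which contains the bilinear combination $W_1[\mathcal{T}, W_1]$. Applying the second identity of~\eqref{W1} to rewrite this bilinear combination, everything collapses to the stated $(p'''/32)Q_1 + (q'/8)(Q_2+2i\mathcal{T}) + (pp'/64)(Q_4+3\mathbb{I}_4-4i\mathcal{T})$, with $Q_4$ obtained from $Q_3$ and from the contributions of $PW_1$ and $\mathcal{Q}$. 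The residual $\mathcal{O}(z^{-1})$ in~\eqref{8.5'''} absorbs the higher truncation terms in the expansion of $U_3^{-1}$ and in $\Phi_1 U_3$.

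\medskip

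The main obstacle is exactly this bookkeeping at orders $z^{-3}$ and $z^{-4}$: many matrix products feed in (the derivative $U_3'$, the products $U_3^{-1}\mathcal{W}_4$, $\Phi_1 \mathcal{W}_4$, and $izT_1 \mathcal{W}_4$, as well as the $\mathcal{W}_4^2$ correction to $U_3^{-1}$), and the identities~\eqref{W1}--\eqref{W3W1'''} must be applied in the correct order to eliminate spurious off-diagonal pieces — in particular the awkward $W_1[\mathcal{T}, W_1]$ term — and to isolate the single correction matrix $Q_4$. I would organize the computation by splitting every $4\times 4$ matrix that enters into its diagonal and off-diagonal parts, which makes the cancellation structure transparent and reduces the task to verifying a handful of explicit matrix identities among the given $\mathcal{T}, P, Q, \mathcal{Q}, W_1, W_2, W_3, Q_1, Q_2, Q_3$.
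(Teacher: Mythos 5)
Your proposal takes essentially the same route as the paper's proof: substitute $Y_1=U_3Y_4$ into \eqref{5.5}, write $Y_4'=U_3^{-1}(A_1U_3-U_3')Y_4$ with $A_1=izT_1+z^{-1}\Phi_1$, expand $U_3^{-1}=\mathbb{I}_4-z^{-2}\mathcal{W}_4+z^{-4}\mathcal{W}_4^2+\mathcal{O}(z^{-6})$, and collect coefficients order by order, using the commutator identities \eqref{W1}, \eqref{W2}, \eqref{W3W1'''} to kill the off-diagonal residual at orders $z^{-1},z^{-2},z^{-3}$ while the diagonal part is absorbed into $T_4$; the $z^{-4}$ off-diagonal residual then yields $F_4$. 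This is exactly the structure of the paper's computation culminating in \eqref{UA'''}, and your proposed organization by diagonal/off-diagonal splitting is a sound way to manage the bookkeeping.

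One small inaccuracy in your description of the $z^{-4}$ residual: you attribute a ``$q'Q$ correction'' to the piece $U_3^{-1}\cdot z^{-1}\Phi_1 U_3$. In fact, the $\Phi_1$-term $-\tfrac{q}{4z^2}Q$ contributes only at order $z^{-3}$; the $q'$ in $F_4$ arises exclusively from differentiating the $\tfrac{q}{8}Q_2$ piece of $W_3$ inside $U_3'$. You also leave implicit the explicit matrix identity used in the paper to assemble $Q_4$, namely $64iW_1[W_2,\mathcal{T}]+64iW_2[W_1,\mathcal{T}]+16[P,W_2]+64W_1^2+2Q_3=Q_4$; your phrase ``$Q_4$ obtained from $Q_3$ and from the contributions of $PW_1$ and $\mathcal{Q}$'' gestures at this but would need to be made precise to complete the argument. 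Neither point changes the validity of the overall approach.
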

\begin{proof}
Let $z\in\mathcal{Z}_+(r)$. Substituting \eqref{8.3'''} into \eqref{5.5}, we obtain
\begin{equation}\label{y3'''}
Y_4'=U_3^{-1}(A_1U_3-U_3')Y_4, \quad
A_1=izT_1+\frac{1}{z}\Phi_1=iz\mathcal{T}-\frac{p}{4z}P-\frac{q}{4z^3}Q.
\end{equation}
Let $|z|\to +\infty$. The identity~\eqref{8.1'''} gives
\begin{equation}\label{u-1'''}
U_3^{-1}= \mathbb{I}_4-\frac{\mathcal{W}_4}{z^2}+\frac{\mathcal{W}_4^2}{z^4}+\mathcal{O}(z^{-6})
\end{equation}
uniformly in $x\in\mathbb{T}$. Using \eqref{8.1'''} and \eqref{u-1'''}, we have
\[
U_3^{-1}A_1U_3 = \Big(\mathbb{I}_4-\frac{\mathcal{W}_4}{z^2}+\frac{\mathcal{W}_4^2}{z^4}+
\mathcal{O}(z^{-6})\Big)A_1\Big(\mathbb{I}_4+\frac{\mathcal{W}_4}{z^2}\Big)=
A_1+\frac{[A_1, \mathcal{W}_4]}{z^2}+
\frac{\mathcal{W}_4[\mathcal{W}_4, A_1]}{z^4}+\mathcal{O}(z^{-5})
\]
and
\[
U_3^{-1}U_3'= \Big(\mathbb{I}_4-\frac{\mathcal{W}_4}{z^2}+\frac{\mathcal{W}_4^2}{z^4}
+\mathcal{O}(z^{-6})\Big)\frac{\mathcal{W}_4'}{z^2}=
\frac{\mathcal{W}_4'}{z^2}-\frac{\mathcal{W}_4\mathcal{W}_4'}{z^4}+\mathcal{O}(z^{-6})
\]
uniformly in $x\in\mathbb{T}$. These asymptotics yield
\[
U_3^{-1}\Big(A_1U_3-U_3'\Big)= A_1-\frac{1}{z^2}\Big([\mathcal{W}_4, A_1]+\mathcal{W}_4'\Big)+
\frac{1}{z^4}\Big(\mathcal{W}_4[\mathcal{W}_4, A_1]
+\mathcal{W}_4\mathcal{W}_4'\Big)+\mathcal{O}(z^{-5}).
\]
The identities \eqref{mathcalW3}, \eqref{8.2'''}, and \eqref{y3'''} imply
\begin{flalign*}
\mathcal{W}_4' &= -p'W_1-\frac{p''}{z}W_2-\frac{1}{z^2}W_3', \qquad
\mathcal{W}_4\mathcal{W}_4'= pp'W_1^2+\mathcal{O}(z^{-1}),\\
[\mathcal{W}_4, A_1] &= -izp[W_1, \mathcal{T}]-ip'[W_2, \mathcal{T}]+
\frac{1}{z}\Big(\frac{p^2}{4}[W_1, P]-i[W_3, \mathcal{T}]\Big)
+\frac{pp'}{4z^2}[W_2, P]+\mathcal{O}(z^{-3}),\\
\mathcal{W}_4[\mathcal{W}_4, A_1] &= izp^2W_1[W_1, \mathcal{T}]+ipp'\Big(W_1[W_2, \mathcal{T}]
+W_2[W_1, \mathcal{T}]\Big)+\mathcal{O}(z^{-1}).
\end{flalign*}
Therefore, these asymptotics give
\begin{equation}\label{UA'''}
\begin{aligned}
U_3^{-1}\Big(A_1U_3-U_3'\Big) &= iz\mathcal{T}-\frac{p}{4z}\Big(P+4i[\mathcal{T}, W_1]\Big)
+\frac{p'}{z^2}\Big(W_1+i[W_2, \mathcal{T}]\Big) \\
&\phantom{123}+\frac{1}{z^3}\bigg(i[W_3, \mathcal{T}]+
\frac{p^2}{4}\Big([P, W_1]-4iW_1[\mathcal{T}, W_1]\Big)-\frac{q}{4}Q+p''W_2\bigg) \\
&\phantom{123}+\frac{pp'}{64z^4}\Big(64iW_1[W_2, \mathcal{T}]+64iW_2[W_1, \mathcal{T}]
+16[P, W_2]+64W_1^2+2Q_3\Big) \\
&\phantom{123}+\frac{1}{z^4}\Big(\frac{p'''}{32}Q_1+\frac{q'}{8}Q_2\Big)+\mathcal{O}(z^{-5})
\end{aligned}
\end{equation}
uniformly in $x\in [0, 1]$. Substituting \eqref{W1}, \eqref{W2}, \eqref{W3W1'''},
and the identity
\[
64iW_1[W_2, \mathcal{T}]+64iW_2[W_1, \mathcal{T}]+16[P, W_2]+64W_1^2+2Q_3=Q_4
\]
into \eqref{UA'''}, we get
\begin{flalign*}
U_3^{-1}\Big(A_1U_3-U_3'\Big) &=
iz\Big(\mathcal{T}+\frac{p}{4z^2}\mathcal{T}^3-\frac{q}{4z^4}\mathcal{T}
+\frac{p^2}{32z^4}\mathcal{T}\Big) \\
&+ \frac{1}{z^4}\Big(\frac{p'''}{32}Q_1+\frac{q'}{8}Q_2+
\frac{pp'}{64}Q_4+\mathcal{O}(z^{-1})\Big)
=izT_4+\frac{1}{z^4}\Phi_4,
\end{flalign*}
where $T_4$ and $\Phi_4$ are defined by \eqref{t3'''} and \eqref{8.5'''}, respectively.
Substituting the last identity into equation \eqref{y3'''}, we have \eqref{8.4'''}.
\end{proof}

\subsection{The Birkhoff method}\label{subsec2.2}
The fundamental matrix contains both exponentially increasing, bounded, and decreasing
entries at high energy. Therefore, asymptotic analysis of this matrix is rather difficult.
In various combinations of entries of the fundamental matrix, the contribution of bounded
and decreasing entries completely disappears against the background of
the contribution of increasing ones. Birkhoff's method yields a factorization
of the fundamental matrix, where the exponential entries are separated into a
separate diagonal matrix. This allows good control over their
contribution when calculating the asymptotics.

Now we write the basic concept for the matrix form of the Birkhoff method
(see~\cite{BK-20152}, \cite{BK-2019}). We consider the differential equation
\begin{equation}\label{4.5}
\mathcal{Y}'-iz\Theta\mathcal{Y}=\frac{1}{z^m}\Phi\mathcal{Y}
\end{equation}
on the interval $[0, 1]$ with the unknown $4\times 4$-matrix-valued function
$\mathcal{Y}$, where $z\in\mathcal{Z}_+(r)$, $r>0$ is large enough, $m\in\mathbb{N}$,
and the $4\times 4$-matrix-valued functions
$\Theta(x, z)$ and $\Phi(x, z)$ satisfy the following conditions:\\
1) $\Theta=\mathrm{diag}\,(\Theta_1, \Theta_2, \Theta_3, \Theta_4)$ is diagonal;\\
2) $\Phi(\cdot, z)\in L^1(\mathbb{T})$ and $\Theta(\cdot, z)\in L^1(\mathbb{T})$
for all $z\in\mathcal{Z}_+(r)$;\\
3) for a.~e. $x\in [0, 1]$ the functions $\Phi(x, z)$ and $\Theta(x, z)$ are analytic in
$\mathcal{Z}_+(r)$ and
\begin{equation}\label{4.4}
\Phi(x, z)=F(x)+\mathcal{O}(z^{-1}), \qquad \qquad \Theta(x, z)=\mathcal{T}+\mathcal{O}(z^{-1}),
\end{equation}
as $|z|\to +\infty$, $z\in\mathcal{Z}_+$, uniformly in $x\in [0, 1]$, where
the matrix $\mathcal{T}$ is given by \eqref{2.2} and
the $4\times 4$-matrix-valued function $F\in L^1(\mathbb{T})$;\\
4) $F$ is off-diagonal, that is, $F_{jj}=0$, $j=1, 2, 3, 4$.\\
Note that equations \eqref{5.5}, \eqref{8.4'}, \eqref{8.4''}, and \eqref{8.4'''}
have the form \eqref{4.5}.

In fact, using the Birkhoff method, we rewrite the differential equation \eqref{4.5} in the form
of a specific Fredholm integral equation with a small kernel at high energy.

Let $K$ be an integral operator in the space $C[0, 1]$ of $4\times 4$ matrix-valued
functions given by
\[
(K\mathcal{X})_{lj}(x, z)=\int_0^1 K_{lj}(x, s, z)(\Phi\mathcal{X})_{lj}(s, z)\,ds,
\quad l, j=1, 2, 3, 4, \quad (x, z)\in [0, 1]\times\mathcal{Z}_+(r),
\]
for all $\mathcal{X}\in C[0, 1]$, where $r>0$ is large enough and
\begin{equation}\label{4.9}
K_{lj}(x, s, z)=
\begin{cases}
e^{iz\int_s^x(\Theta_l(u, z)-\Theta_j(u, z))\,du}\,\chi(x-s), \quad l<j,\\
-e^{iz\int_s^x(\Theta_l(u, z)-\Theta_j(u, z))\,du}\,\chi(s-x), \quad l\geqslant j,
\end{cases}
\quad \chi(s)=
\begin{cases}
1, \quad s\geqslant 0,\\
0, \quad s<0.
\end{cases}
\end{equation}
Then for $z\in\mathcal{Z}_+(r)$ the integral operator $K$ is a contraction.
Therefore, the matrix-valued integral equation
\begin{equation}\label{4.10}
\mathcal{X}=\mathbb{I}_4+\frac{1}{z^m}K\mathcal{X}, \qquad m\in\mathbb{N},
\end{equation}
has a unique solution $\mathcal{X}(\cdot, z)\in C[0, 1]$. Moreover,
$\mathcal{X}(\cdot, z)\in C^1[0, 1]$ and each matrix-valued function
$\mathcal{X}(x, \cdot)$, $x\in [0, 1]$, is analytic on $\mathcal{Z}_+(r)$ and
satisfies the asymptotics
\begin{equation}\label{newx}
\mathcal{X}(x, z)=\mathbb{I}_4+\frac{1}{z^m}(K\mathbb{I}_4)(x, z)+\mathcal{O}(z^{-2m}),
\end{equation}
as $|z|\to +\infty$, $z\in\mathcal{Z}_+$, uniformly in $x\in [0, 1]$, where
\[
(K\mathbb{I}_4)_{lj}(x, z)=\int_0^1K_{lj}(x, s, z)\Phi_{lj}(s, z)\,ds, \qquad l, j=1, 2, 3, 4.
\]
The integral equation \eqref{4.10} and the differential equation \eqref{4.5}
are equivalent in the following sense.
\begin{lemma}\label{lh3}
Let $x\in [0, 1]$, $z\in\mathcal{Z}_+(r)$, where $r>0$ is large enough.
Then the matrix-valued function $\mathcal{Y}$ given by the identity
\begin{equation}\label{4.6}
\mathcal{Y}(x, z)=\mathcal{X}(x, z)e^{iz\int_0^x\Theta(s, z)\,ds}
\end{equation}
satisfies the differential equation~\eqref{4.5} if and only if $\mathcal{X}$ is
a solution of the integral equation~\eqref{4.10}.
\end{lemma}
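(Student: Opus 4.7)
The plan is to substitute the Birkhoff change of variables \eqref{4.6} into \eqref{4.5} and show that the resulting identity for $\mathcal{X}$ agrees, entry-by-entry, with the one obtained by differentiating the integral equation \eqref{4.10}. First I would set $E(x,z)=e^{iz\int_0^x\Theta(s,z)\,ds}$; since $\Theta$ is diagonal this $E$ is diagonal and satisfies $E'=iz\Theta E=izE\Theta$. Differentiating $\mathcal{Y}=\mathcal{X}E$ and using $E'=iz\Theta E$, equation \eqref{4.5} becomes
\[
\mathcal{X}'E+iz\mathcal{X}\Theta E=iz\Theta\mathcal{X}E+\tfrac{1}{z^m}\Phi\mathcal{X}E,
\]
which after right multiplication by $E^{-1}$ reduces to the entrywise scalar system
\[
\mathcal{X}'_{lj}(x,z)-iz\bigl(\Theta_l(x,z)-\Theta_j(x,z)\bigr)\mathcal{X}_{lj}(x,z)=\tfrac{1}{z^m}(\Phi\mathcal{X})_{lj}(x,z),\quad l,j=1,2,3,4.
\]
Equivalently, $\mathcal{X}'-iz[\Theta,\mathcal{X}]=\tfrac{1}{z^m}\Phi\mathcal{X}$, so that the $\mathcal{Y}$-equation is really a Sylvester-type conjugated equation for $\mathcal{X}$.

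Next I would check that differentiating \eqref{4.10} entry-by-entry produces precisely the same scalar system. For $l<j$ the kernel $K_{lj}$ is supported on $\{s\le x\}$ and equals $e^{iz\int_s^x(\Theta_l-\Theta_j)\,du}$; Leibniz's rule then yields a term $iz(\Theta_l-\Theta_j)\mathcal{X}_{lj}$ from differentiating inside the integral (where the $-\delta_{lj}$ carried in from $\mathbb{I}_4$ vanishes because $l\ne j$) and a boundary term $(\Phi\mathcal{X})_{lj}(x,z)/z^m$ from the jump of $\chi(x-s)$ at $s=x$. For $l\ge j$ the kernel is supported on $\{s\ge x\}$ and the sign in \eqref{4.9} is opposite, but exactly the same computation applies; the diagonal case $l=j$ is the only one for which the constant $\delta_{ll}=1$ from $\mathbb{I}_4$ survives. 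This gives the ``if'' half of the equivalence: every solution of \eqref{4.10} produces, via \eqref{4.6}, a solution of \eqref{4.5}.

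For the converse, applied to $\mathcal{X}=\mathcal{Y}E^{-1}$ obtained from a solution of \eqref{4.5}, each line of the scalar system is a first-order linear ODE with coefficient $iz(\Theta_l-\Theta_j)$; I would invert it by the variation-of-constants formula with the Birkhoff-type boundary values $\mathcal{X}_{lj}(0,z)=0$ for $l<j$, $\mathcal{X}_{ll}(1,z)=1$, and $\mathcal{X}_{lj}(1,z)=0$ for $l>j$, which are exactly the values read off from \eqref{4.10} at $x=0$ for $l<j$ and at $x=1$ for $l\ge j$. This rebuilds the kernel \eqref{4.9} and hence the integral equation. The main obstacle, though only bookkeeping, will be the careful handling of the piecewise kernel: tracking the opposite sign convention above versus on and below the diagonal, choosing the correct half-interval $(0,x)$ or $(x,1)$ in the variation of constants for each pair $(l,j)$, and attributing the constant term $\mathbb{I}_4$ solely to the diagonal. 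Once these pieces are aligned the equivalence reduces to the Leibniz rule and uniqueness for scalar linear ODEs, and the argument follows the same pattern as in \cite{BK-20152} and \cite{BK-2019}.
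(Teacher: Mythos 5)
The paper itself defers the proof to \cite[Theorem~4.5]{BK-2019}, so your job is to reproduce the standard Birkhoff equivalence argument, and for the ``if'' direction you do so correctly. Setting $E(x,z)=e^{iz\int_0^x\Theta\,ds}$, using $E'=iz\Theta E=izE\Theta$ for a diagonal $\Theta$, and right-multiplying by $E^{-1}$ to arrive at the conjugated scalar system $\mathcal{X}'_{lj}-iz(\Theta_l-\Theta_j)\mathcal{X}_{lj}=\frac{1}{z^m}(\Phi\mathcal{X})_{lj}$ is exactly right, and differentiating \eqref{4.10} entry by entry reproduces that system: you correctly get the boundary term $(\Phi\mathcal{X})_{lj}(x,z)/z^m$ with the same sign on both sides of the diagonal (for $l\geqslant j$ the minus in $K_{lj}$ cancels against the minus from the lower limit of $\int_x^1$), and the constant $\mathbb{I}_4$ contributes only on the diagonal, where $\Theta_l-\Theta_l=0$ kills the commutator term. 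So the direction ``\eqref{4.10} $\Rightarrow$ \eqref{4.5}'' is complete.

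The ``only if'' direction is where your argument has a genuine gap. From ``$\mathcal{Y}$ solves \eqref{4.5}'' you obtain that $\mathcal{X}=\mathcal{Y}E^{-1}$ solves the coupled Sylvester system, but that system is a system of ODEs with no boundary data attached; the values $\mathcal{X}_{lj}(0,z)=0$ for $l<j$ and $\mathcal{X}_{lj}(1,z)=\delta_{lj}$ for $l\geqslant j$ are not consequences of \eqref{4.5}, they are exactly the extra content packaged into the half-interval kernel \eqref{4.9}. Indeed if $\mathcal{X}$ is the solution of \eqref{4.10}, then $c\mathcal{X}$ for a constant $c\neq 1$ also gives a $\mathcal{Y}$ solving \eqref{4.5} (linearity of the Sylvester system), yet $c\mathcal{X}$ does not solve \eqref{4.10}. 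So variation of constants alone cannot ``rebuild'' \eqref{4.10}; you must first argue that these particular boundary values (or, equivalently, the normalization $\mathcal{X}\to\mathbb{I}_4$ built into the Birkhoff solution) are part of the hypothesis. What the paper actually needs, and what makes the circle close in Lemma~\ref{cor5.1}, is the ``if'' direction together with the \emph{separate} uniqueness statement that $Y_1$ is the only solution of \eqref{5.5} with the asymptotics $Y_1=Y_0(\mathbb{I}_4+\mathcal{O}(z^{-1}))$. If you want a genuine converse for the lemma, you should state that $\mathcal{X}$ additionally satisfies that normalization (or the mixed boundary data) and prove it is forced, rather than silently adopting it from \eqref{4.10}.
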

The proof of this lemma is similar to \cite[Theorem~4.5]{BK-2019}.

\subsection{Representation of the fundamental matrix}\label{subsec2.3}
Using \eqref{4.6}, in the following lemma we obtain the factorization of the fundamental
matrix~$A$ of equation \eqref{1.3}. We represent the matrix $A$ as a product of the
bounded matrix $\mathcal{X}$, the simple matrix $\Omega$, and the diagonal matrix
$\exp\{iz\int_0^x\Theta(s, z)\,ds\}$. Therefore, all exponentially increasing terms are
removed from $A$ into this diagonal matrix. In fact, we have four factorizations:
the factorization \eqref{5.2} for nonsmooth coefficients and the factorizations
\eqref{8.8'}~--~\eqref{8.8'''} for the smooth ones.
Note that the factorization \eqref{5.2} is also true in the case of smooth coefficients.
But this factorization is inconvenient for calculating sharp eigenvalue asymptotics,
since the remainder term in $\mathcal{X}$ has the order $\mathcal{O}(z^{-1})$
In the asymptotics \eqref{8.8'''}, the remainder term in $\mathcal{X}$
is of order $\mathcal{O}(z^{-4})$. Therefore, it is more convenient for obtaining sharp
eigenvalue asymptotics.

Introduce the matrix-valued functions $\mathcal{B}_\sigma(x, z)$, $(x, z)\in[0, 1]
\times\mathcal{Z}_+$, $\sigma=1, 2, 3, 4$, are given~by
\begin{equation}\label{Bnu}
\mathcal{B}_{\sigma, lj}(x, z)=\int_0^1K_{lj}(x, s, z)F_{\sigma, lj}(s)\,ds,
\end{equation}
where the matrix-valued functions $K_{lj}$, $l, j=1, 2, 3 ,4$, are defined by
\eqref{4.9}, $F_\sigma$, $\sigma=1, 2, 3, 4$, have the form \eqref{Phi1},
\eqref{phi2}, \eqref{8.5''}, and \eqref{8.5'''}. Each matrix-valued function
$\mathcal{B}_\sigma(x, \cdot)$, $x\in [0, 1]$, $\sigma=1, 2, 3, 4$, is analytic
and bounded in $\mathcal{Z}_+$.

The definitions \eqref{Phi1}, \eqref{phi2}, \eqref{8.5''}, and \eqref{8.5'''} show that
in the equations \eqref{5.5}, \eqref{8.4'}, \eqref{8.4''}, \eqref{8.4'''} the function
$K\mathbb{I}_4$ satisfies the asymptotics $K\mathbb{I}_4=\mathcal{B}_\sigma+\mathcal{O}(z^{-1})$,
$\sigma=1, 2, 3 ,4$. Then the asymptotics~\eqref{newx} has the form
\begin{equation}\label{XG1G2}
\begin{aligned}
\mathcal{X}(x, z) &= \mathbb{I}_4+\frac{\mathcal{B}_1(x, z)}{z}+\mathcal{O}(z^{-2}),
\qquad \mathcal{X}(x, z)=\mathbb{I}_4+\frac{\mathcal{B}_2(x, z)}{z^2}+\mathcal{O}(z^{-3}),\\
\mathcal{X}(x, z) &= \mathbb{I}_4+\frac{\mathcal{B}_3(x, z)}{z^3}+\mathcal{O}(z^{-4}), \qquad
\mathcal{X}(x, z)=\mathbb{I}_4+\frac{\mathcal{B}_4(x, z)}{z^4}+\mathcal{O}(z^{-5}).
\end{aligned}
\end{equation}
The definition \eqref{4.9} yields
\begin{flalign}
\mathcal{B}_{\sigma, jj}(x, z) &=0, \quad j=1, 2, 3, 4, \label{4.13}\\
\mathcal{B}_{\sigma, lj}(x, z) &=-\int_x^1 e^{-iz(s-x)(\omega_l-\omega_j)}F_{\sigma, lj}(s)\,ds,
\quad 1\leqslant j<l\leqslant 4, \label{4.14}\\
\mathcal{B}_{\sigma, lj}(x, z) &=\int_0^x e^{iz(x-s)(\omega_l-\omega_j)}F_{\sigma, lj}(s)\,ds,
\quad 1\leqslant l<j\leqslant 4. \label{4.14*}
\end{flalign}
Introduce the functions $\zeta_{\sigma, lj}$, $x\in [0, 1]$, $z\in\mathcal{Z}_+$, $\sigma$,
$l$, $j=1, 2, 3, 4$, by
\begin{equation}\label{4.31}
\zeta_{\sigma, lj}(x, z)=\frac{\mathcal{B}_{\sigma, lj}(x, z)}{z^\sigma}+
\frac{\mathcal{W}_{\sigma, lj}(x, z)}{z^2},
\end{equation}
where $\mathcal{W}_1=0$, $\mathcal{W}_2$, $\mathcal{W}_3$, $\mathcal{W}_4$, are defined by
\eqref{mathcalW1}, \eqref{mathcalW2}, \eqref{mathcalW3}, respectively, and
$\sigma=1$ for the case $p$, $q\in L^1(\mathbb{T})$, $\sigma=2$ for the case
$p'$, $q\in L^1(\mathbb{T})$, $\sigma=3$ for the case $p''$, $q\in L^1(\mathbb{T})$,
$\sigma=4$ for the case $p'''$, $q'\in L^1(\mathbb{T})$. Therefore, the value $\sigma$
coincides with $m$ (see Section~\ref{subsec2.2}). The functions
$\zeta_{\sigma, lj}$, $\sigma$, $l$, $j=1, 2, 3, 4$, are analytic
and bounded in~$\mathcal{Z}_+$.

Now we formulate the result about factorization of the fundamental matrix $A$.
\begin{lemma}\label{cor5.1}
Let $p$, $q\in L^1(\mathbb{T})$ and let $(x, z)\in [0, 1]\times \mathcal{Z}_+(r)$
for some $r>0$ large enough. Then \\
i) The fundamental matrix $A$ of equation \eqref{1.3} satisfies the asymptotics
\begin{equation}\label{5.2}
A(x, z)=\Omega(z)\Big(\mathbb{I}_4+\frac{\mathcal{B}_1(x, z)}{z}
+\mathcal{O}(z^{-2})\Big)e^{iz\int_0^xT_1(s, z)\,ds},
\end{equation}
uniformly in $x\in [0, 1]$, where $\Omega$ is given by \eqref{2.3}, the diagonal $4\times 4$
matrix-valued function $T_1$ has the form \eqref{5.3}, and
the matrix-valued function $\mathcal{B}_1$ defined by \eqref{Bnu}.

ii) Let $p'$, $q\in L^1(\mathbb{T})$. Then
\begin{equation}\label{8.8'}
A(x, z)=\Omega(z)U_1(x, z)\Big(\mathbb{I}_4+\frac{\mathcal{B}_2(x, z)}{z^2}
+\mathcal{O}(z^{-3})\Big)e^{iz\int_0^xT_2(s, z)\,ds},
\end{equation}
where $T_2=T_1$ is defined by \eqref{5.3}, $U_1$ is given by \eqref{8.1}, and
the matrix-valued function $\mathcal{B}_2$ defined by \eqref{Bnu}.

iii) Let, in addition, $p''$, $q\in L^1(\mathbb{T})$. Then
\begin{equation}\label{8.8''}
A(x, z)=\Omega(z)U_2(x, z)\Big(\mathbb{I}_4+\frac{\mathcal{B}_3(x, z)}{z^3}
+\mathcal{O}(z^{-4})\Big)e^{iz\int_0^xT_3(s, z)\,ds},
\end{equation}
where $T_3$ and $U_2$ are given by \eqref{t3''} and \eqref{8.1''}, respectively,
and the matrix-valued function $\mathcal{B}_3$ defined by \eqref{Bnu}.

iv) Let, in addition, $p'''$, $q'\in L^1(\mathbb{T})$. Then
\begin{equation}\label{8.8'''}
A(x, z)=\Omega(z)U_3(x, z)\Big(\mathbb{I}_4+\frac{\mathcal{B}_4(x, z)}{z^4}
+\mathcal{O}(z^{-5})\Big)e^{iz\int_0^xT_4(s, z)\,ds},
\end{equation}
where $T_4$ and $U_3$ are given by \eqref{t3'''} and \eqref{8.1'''}, respectively,
and the matrix-valued function $\mathcal{B}_4$ defined by \eqref{Bnu}.

v) The fundamental solutions $\phi_j$, $j=1, 2, 3, 4$, given by \eqref{5.9} satisfy
\begin{equation}\label{4.30}
\begin{aligned}
&\begin{pmatrix}
\phi_1 & \phi_2 & \phi_3 & \phi_4 \\
\phi_1' & \phi_2' & \phi_3' & \phi_4' \\
\phi_1'' & \phi_2'' & \phi_3'' & \phi_4'' \\
\phi_1'''+p\phi_1' & \phi_2'''+p\phi_2' & \phi_3'''+p\phi_1' & \phi_4'''+p\phi_4'
\end{pmatrix}=
\begin{pmatrix}
1 & 1 & 1 & 1 \\
-z & iz & -iz & z \\
z^2 & -z^2 & -z^2 & z^2 \\
-z^3 & -iz^3 & iz^3 & z^3
\end{pmatrix}\\
&\times
\begin{pmatrix}
1+\zeta_{\sigma, 11}+\mathcal{O}(z^{-\sigma-1}) & \zeta_{\sigma, 12}+\mathcal{O}(z^{-\sigma-1}) &
\zeta_{\sigma, 13}+\mathcal{O}(z^{-\sigma-1}) &
\zeta_{\sigma, 14}+\mathcal{O}(z^{-\sigma-1}) \\
\zeta_{\sigma, 21}+\mathcal{O}(z^{-\sigma-1}) & 1+\zeta_{\sigma, 22}+\mathcal{O}(z^{-\sigma-1}) &
\zeta_{\sigma, 23}+\mathcal{O}(z^{-\sigma-1}) &
\zeta_{\sigma, 24}+\mathcal{O}(z^{-\sigma-1}) \\
\zeta_{\sigma, 31}+\mathcal{O}(z^{-\sigma-1}) & \zeta_{\sigma, 32}+\mathcal{O}(z^{-\sigma-1}) &
1+\zeta_{\sigma, 33}+\mathcal{O}(z^{-\sigma-1}) &
\zeta_{\sigma, 34}+\mathcal{O}(z^{-\sigma-1}) \\
\zeta_{\sigma, 41}+\mathcal{O}(z^{-\sigma-1}) & \zeta_{\sigma, 42}+\mathcal{O}(z^{-\sigma-1}) &
\zeta_{\sigma, 43}+\mathcal{O}(z^{-\sigma-1}) & 1+\zeta_{\sigma, 44}+\mathcal{O}(z^{-\sigma-1})
\end{pmatrix} \\
&\phantom{123}\times
\begin{pmatrix}
c_{\sigma, 1} & 0 & 0 & 0 \\
0 & c_{\sigma, 2} & 0 & 0 \\
0 & 0 & c_{\sigma, 3} & 0 \\
0 & 0 & 0 & c_{\sigma, 4}
\end{pmatrix},
\end{aligned}
\end{equation}
as $|z|\to +\infty$, $z\in\mathcal{Z}_+$, uniformly in $x\in [0, 1]$,
where $\zeta_{\sigma, lj}$, $\sigma$, $l$, $j=1, 2, 3, 4$, are defined by~\eqref{4.31}, and
\begin{equation}
c_{\sigma, j}(x, z)=e^{iz\int_0^x T_{\sigma, j}(s, z)\,ds}, \qquad \sigma, j=1, 2, 3, 4.
\end{equation}
Here $T_1$, $T_2=T_1$, $T_3$, $T_4$ are given by \eqref{5.3}, \eqref{t3''},
\eqref{t3'''}, respectively.
\end{lemma}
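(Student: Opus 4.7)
The plan is to apply the Birkhoff framework of Section~\ref{subsec2.2} (in particular Lemma~\ref{lh3}) to each of the four transformed equations \eqref{5.5}, \eqref{8.4'}, \eqref{8.4''}, \eqref{8.4'''}. These fit the canonical form \eqref{4.5} with $m=\sigma\in\{1,2,3,4\}$, $\Theta=T_\sigma$ diagonal, and $\Phi=\Phi_\sigma$ with off-diagonal leading part $F_\sigma$. Off-diagonality follows by inspection of the matrices $P$, $W_1$, $Q_1$ in \eqref{Phi1}, \eqref{phi2}, \eqref{8.5''}, \eqref{8.5'''}, once one observes that the diagonal contributions have been absorbed into $T_\sigma$ by the very design conditions \eqref{W1}, \eqref{W2}, \eqref{W3W1'''}. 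Under the respective regularity assumptions, the integrability hypotheses 1)--4) of Section~\ref{subsec2.2} are immediate.

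Lemma~\ref{lh3} then produces a solution $\mathcal{X}_\sigma(x,z)\exp\{iz\int_0^x T_\sigma(s,z)\,ds\}$ of \eqref{4.5} with $\mathcal{X}_\sigma$ satisfying \eqref{XG1G2}. I would next identify this Birkhoff solution with the matrix $Y_\sigma$ defined in \eqref{2.5}, \eqref{8.3}, \eqref{8.3''}, \eqref{8.3'''}. Since $T_\sigma=\mathcal{T}+\mathcal{O}(z^{-2})$, one has $\exp\{iz\int_0^x T_\sigma\,ds\}=Y_0(\mathbb{I}_4+\mathcal{O}(z^{-1}))$, so the Birkhoff solution has normalization $Y_0(\mathbb{I}_4+\mathcal{O}(z^{-1}))$; the same is true for $Y_\sigma=U_{\sigma-1}^{-1}Y_1$ because $U_{\sigma-1}=\mathbb{I}_4+\mathcal{O}(z^{-2})$ and $Y_1$ already satisfies this normalization. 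Uniqueness of the normalized solution of \eqref{4.5}, which in turn follows from the contraction argument underlying \eqref{4.10}, forces the two to coincide.

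The factorizations (i)--(iv) then follow immediately from $A=\Omega Y_1=\Omega U_{\sigma-1}Y_\sigma$ with the convention $U_0=\mathbb{I}_4$. For part (v), I would multiply out the middle factor in each of the four matrix products. Writing $U_{\sigma-1}=\mathbb{I}_4+\mathcal{W}_\sigma/z^2$, one has
\[
U_{\sigma-1}\Bigl(\mathbb{I}_4+\frac{\mathcal{B}_\sigma}{z^\sigma}+\mathcal{O}(z^{-\sigma-1})\Bigr)=\mathbb{I}_4+\frac{\mathcal{W}_\sigma}{z^2}+\frac{\mathcal{B}_\sigma}{z^\sigma}+\mathcal{O}(z^{-\sigma-1}),
\]
whose $(l,j)$-entry equals $\delta_{lj}+\zeta_{\sigma,lj}+\mathcal{O}(z^{-\sigma-1})$ by the definition \eqref{4.31}. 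The final diagonal factor is $\mathrm{diag}(c_{\sigma,1},\ldots,c_{\sigma,4})$ by diagonality of $T_\sigma$, and the leftmost factor is exactly $\Omega$ from \eqref{2.3}; comparison with the LHS $A$ then produces \eqref{4.30}.

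The main obstacle I expect is the remainder bookkeeping in part (v) for $\sigma=4$. There $\mathcal{W}_4$ itself carries $z^{-1}$ and $z^{-2}$ contributions through \eqref{mathcalW3}, so one must verify that the cross term $(\mathcal{W}_\sigma/z^2)(\mathcal{B}_\sigma/z^\sigma)$ is genuinely dominated by $\mathcal{O}(z^{-\sigma-1})=\mathcal{O}(z^{-5})$. This relies on the uniform boundedness of $\mathcal{W}_\sigma$ on $[0,1]\times\mathcal{Z}_+(r)$, which is a consequence of the smoothness hypothesis $p''',q'\in L^1(\mathbb{T})$ together with the explicit boundedness of the numerical matrices $W_1,W_2,W_3$ given by \eqref{6.2}, \eqref{6.2'}, \eqref{8.2'''}. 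A secondary but easier point is making the uniqueness identification of $\mathcal{Y}_\sigma$ with $Y_\sigma$ completely rigorous across all four regularity levels.
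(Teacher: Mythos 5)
Your proposal is correct and follows essentially the same route as the paper: invoke Lemma~\ref{lh3} on each of the transformed equations \eqref{5.5}, \eqref{8.4'}, \eqref{8.4''}, \eqref{8.4'''} with $m=\sigma$, identify the Birkhoff solution with $Y_\sigma$ via $A=\Omega Y_1=\Omega U_{\sigma-1}Y_\sigma$, and in part (v) unfold $U_{\sigma-1}(\mathbb{I}_4+\mathcal{B}_\sigma/z^\sigma+\mathcal{O}(z^{-\sigma-1}))$ using \eqref{4.31}. The remainder worry you flag for $\sigma=4$ is benign since $\mathcal{W}_4=\mathcal{O}(1)$ uniformly, so the cross term $(\mathcal{W}_4/z^2)(\mathcal{B}_4/z^4)=\mathcal{O}(z^{-6})$ is absorbed by $\mathcal{O}(z^{-5})$.
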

\begin{proof}
i) The definition \eqref{2.5} yields $A(x, z)=\Omega(z)Y_1(x, z)$, where $Y_1$ is the
solution of equation~\eqref{5.5}, satisfying \eqref{4.6}, where $\mathcal{X}$ is the
solution of the integral equation \eqref{4.10} with $m=1$, $\Theta=T_1$, $\Phi=\Phi_1$,
$T_1$ and $\Phi_1$ are defined by \eqref{5.3} and~\eqref{Phi1}, respectively.
Compare \eqref{5.5} and \eqref{4.5}. Then identity \eqref{4.6} gives
\[
A(x, z)=\Omega(z)\mathcal{X}(x, z)e^{iz\int_0^xT_1(s, z)\,ds}.
\]
Substituting the first asymptotics in \eqref{XG1G2} into last formula, we get \eqref{5.2}.

ii) The definitions \eqref{2.5} and \eqref{8.3} yield $A(x, z)=\Omega(z)Y_1(x, z)=
\Omega(z)U_1(x, z)Y_2(x, z)$, where $Y_2$ is the solution of equation~\eqref{8.4'},
satisfying \eqref{4.6}, where $\mathcal{X}$ is the solution of the integral equation
\eqref{4.10} with $m=2$, $\Theta=T_2$, $\Phi=\Phi_2$,
$T_2$ and $\Phi_2$ are defined by \eqref{5.3} and~\eqref{phi2}, respectively.
Compare \eqref{8.4'} and \eqref{4.5}. Then identities \eqref{4.6} and \eqref{8.3} give
\[
A(x, z)=\Omega(z)U_1(x, z)\mathcal{X}(x, z)e^{iz\int_0^xT_2(s, z)\,ds}.
\]
Substituting the second asymptotics in \eqref{XG1G2} into last formula, we get \eqref{8.8'}.

iii), iv) The proof repeats the arguments from ii).

v) The proof of this statement we provide for the partial case $p'$, $q\in L^1(\mathbb{T})$.
Other cases are considered by a similar way. In this case the fundamental matrix $A$
has the form \eqref{8.8'}. The relations \eqref{8.1} and \eqref{8.8'} yield
\[
A(x, z)=\Omega(z)\Big(\mathbb{I}_4+\frac{1}{z^2}\mathcal{W}_2(x, z)\Big)
\Big(\mathbb{I}_4+\frac{\mathcal{B}_2(x, z)}{z^2}
+\mathcal{O}(z^{-3})\Big)e^{iz\int_0^xT_2(s, z)\,ds},
\]
where $\Omega$ and $\mathcal{W}_2$ are defined by \eqref{2.3} and \eqref{mathcalW1},
respectively. The identities $\phi_j=A_{1j}$, $\phi_j'=A_{2j}$, $\phi_j''=A_{3j}$,
$\phi_j'''+p\phi'=A_{4j}$, $j=1, 2, 3, 4$, imply \eqref{4.30}.
\end{proof}

\subsection{Asymptotics of the determinant of fundamental matrix.}\label{sec2.3}
Our main goal is to obtain high energy asymptotics for the eigenvalues. For this
we deduce the asymptotics of the determinant of fundamental matrix.

Introduce the sector
\[
\mathcal{Z}_+^+=\bigg\{z\in\mathbb{C}: \mathrm{arg}\,z\in \Big[0, \frac{\pi}{8}\Big]\bigg\}.
\]
In the next lemma we derive the fourth-order determinant $\det\phi$ in terms of second-order
determinants. We explain our idea using the determinant of the matrix $\phi_0$ given by
\eqref{phi0} as on the example. The first column of this matrix contains exponentially
decreasing terms as $|z|\to +\infty$ in $\mathcal{Z}_+^+$, and the fourth column
contains exponentially increasing ones. Therefore, using this fact, we can determine
the main term in the asymptotics of the characteristic determinant $\det\phi$.
This term is expressed in terms of linear combinations for product of
second-order determinants. We obtain the following result.
\begin{lemma}\label{lh5.3}
Let $p$, $q\in L^1(\mathbb{T})$ and let $|z|\to +\infty$. Then
\begin{equation}\label{asymptxi}
\det\phi(z)=e^{\sqrt{2}\mathrm{Re}\,z}\mathcal{O}(z^6), \quad z\in\mathcal{Z}_+^+.
\end{equation}
Moreover,
\begin{equation}\label{5.15}
\det\phi(z)=\xi_1(z)\xi_2(z)+\xi_3(z)\xi_4(z)+\mathcal{O}(z^4), \quad z\in\mathcal{Z}_+^+,
\end{equation}
where
\begin{equation}\label{xi12}
\xi_1(z)=\det
\begin{pmatrix}
\phi_3(1, z) & \phi_4(1, z) \\
\phi_3''(1, z) & \phi_4''(1, z)
\end{pmatrix}, \quad
\xi_2(z)=\det
\begin{pmatrix}
\phi_1'(0, z) & \phi_2'(0, z) \\
(\phi_1'''+p\phi_1')(0, z) & (\phi_2'''+p\phi_2')(0, z)
\end{pmatrix},
\end{equation}
\begin{equation}\label{xi34}
\xi_3(z)=\det
\begin{pmatrix}
\phi_2(1, z) & \phi_4(1, z) \\
\phi_2''(1, z) & \phi_4''(1, z)
\end{pmatrix}, \quad
\xi_4(z)=\det
\begin{pmatrix}
\phi_3'(0, z) & \phi_1'(0, z) \\
(\phi_3'''+p\phi_3')(0, z) & (\phi_1'''+p\phi_1')(0, z)
\end{pmatrix}.
\end{equation}
\end{lemma}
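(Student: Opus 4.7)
The plan is to perform a generalized Laplace expansion of $\det\phi$ along the partition of rows into $\{1,2\}$ (the boundary data at $x=0$) and $\{3,4\}$ (the data at $x=1$):
\[
\det\phi(z)=\sum_{\substack{I\subset\{1,2,3,4\}\\|I|=2}}(-1)^{3+i_1+i_2}\det\phi[\{1,2\};I]\cdot\det\phi[\{3,4\};I^c],
\]
where $I=\{i_1<i_2\}$. This is a sum of six terms. The first step is to match the two displayed products: for $I=\{1,2\}$ the sign is $+$ and the Laplace term equals $\xi_2\xi_1=\xi_1\xi_2$, while for $I=\{1,3\}$ the sign is $-$ and, using $\xi_4=-\det\phi[\{1,2\};\{1,3\}]$ from the column swap in \eqref{xi34}, the Laplace term equals $\xi_3\xi_4$.

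The core of the argument is to control the remaining four terms using the asymptotics \eqref{5.9}. A direct computation yields the leading orders
\[
\det\phi[\{3,4\};\{a,b\}]=z^2(\omega_a^2-\omega_b^2)e^{iz(\omega_a+\omega_b)}\bigl(1+\mathcal{O}(z^{-1})\bigr),
\]
\[
\det\phi[\{1,2\};\{a,b\}]=z^4\omega_a\omega_b(\omega_b^2-\omega_a^2)\bigl(1+\mathcal{O}(z^{-1})\bigr).
\]
The remaining pairs $(I,I^c)$ split according to two distinct mechanisms. For $(\{2,4\},\{1,3\})$ and $(\{3,4\},\{1,2\})$ the exponent $iz(\omega_a+\omega_b)$ of the $\{3,4\}$-minor has real part $-\mathrm{Re}\,z\pm\mathrm{Im}\,z$; since $\arg z\in[0,\pi/8]$ forces $\mathrm{Im}\,z\le(\sqrt{2}-1)\mathrm{Re}\,z$, this real part is at most $(\sqrt{2}-2)\mathrm{Re}\,z<0$, so both terms decay exponentially in $|z|$ and are certainly $\mathcal{O}(1)$. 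For $(\{1,4\},\{2,3\})$ and $(\{2,3\},\{1,4\})$ the exponential factor equals $1$, but the cancellations $\omega_1^2=\omega_4^2$ and $\omega_2^2=\omega_3^2$ kill the leading order in both minors; the $\mathcal{O}(z^{-1})$ correction in \eqref{5.9} then reduces $\det\phi[\{1,2\};\{1,4\}]$ to $\mathcal{O}(z^3)$ and $\det\phi[\{3,4\};\{2,3\}]$ to $\mathcal{O}(z)$, so their product is $\mathcal{O}(z^4)$, and the symmetric pair behaves identically.

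Combining, the four subdominant terms fit inside the $\mathcal{O}(z^4)$ remainder, which gives \eqref{5.15}. For \eqref{asymptxi}, the two principal products $\xi_1\xi_2$ and $\xi_3\xi_4$ have magnitudes $|z|^6 e^{\mathrm{Re}\,z\pm\mathrm{Im}\,z}$, and the elementary inequality $\mathrm{Re}\,z+\mathrm{Im}\,z\le\sqrt{2}\,\mathrm{Re}\,z$, sharp precisely at $\arg z=\pi/8$, yields the bound $e^{\sqrt{2}\mathrm{Re}\,z}\mathcal{O}(z^6)$. The main obstacle I anticipate is verifying the second mechanism cleanly: one must confirm that the $\mathcal{O}(z^{-1})$ correction in \eqref{5.9} genuinely reduces each of the two vanishing-leading-order minors by exactly one power of $z$. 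Once that is in place, the rest is a routine bookkeeping of signs and exponentials.
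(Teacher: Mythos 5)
Your proposal is correct and follows essentially the same route as the paper: a generalized Laplace expansion of $\det\phi$ along the row blocks $\{1,2\}$ (data at $x=0$) and $\{3,4\}$ (data at $x=1$), with $\xi_1\xi_2$ and $\xi_3\xi_4$ coming from $I=\{1,2\}$ and $I=\{1,3\}$ respectively, and the four remaining minor products suppressed either by exponential decay ($I=\{2,4\},\{3,4\}$, where the $\{3,4\}$-minor carries $e^{-z\mp iz}$) or by leading-order cancellation ($I=\{1,4\},\{2,3\}$, where $\omega_1^2=\omega_4^2$, $\omega_2^2=\omega_3^2$ force each minor down one power of $z$). The sign accounting and the estimate $\mathrm{Re}\,z+\mathrm{Im}\,z\le\sqrt{2}\,\mathrm{Re}\,z$ on $\mathcal{Z}_+^+$ also match the paper, so the argument is complete.
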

\begin{proof}
Let $z\in\mathcal{Z}_+^+$ and $|z|\to +\infty$. Then
$0\leqslant\mathrm{Im}\,z\leqslant (\sqrt{2}-1)\mathrm{Re}\,z$.
Now we consider the identity~\eqref{3.8}. Direct calculations yield
\begin{equation}\label{xiprom}
\begin{aligned}
\det\phi(z) &=
\det\begin{pmatrix}
\phi_3(1, z) & \phi_4(1, z) \\
\phi_3''(1, z) & \phi_4''(1, z)
\end{pmatrix}
\det\begin{pmatrix}
\phi_1'(0, z) & \phi_2'(0, z) \\
(\phi_1'''+p\phi_1')(0, z) & (\phi_2'''+p\phi_2')(0, z)
\end{pmatrix} \\
&\phantom{123}+ \det\begin{pmatrix}
\phi_2(1, z) & \phi_4(1, z) \\
\phi_2''(1, z) & \phi_4''(1, z)
\end{pmatrix}
\det\begin{pmatrix}
\phi_3'(0, z) & \phi_1'(0, z) \\
(\phi_3'''+p\phi_3')(0, z) & (\phi_1'''+p\phi_1')(0, z)
\end{pmatrix} \\
&\phantom{123}+
\det\begin{pmatrix}
\phi_2(1, z) & \phi_3(1, z) \\
\phi_2''(1, z) & \phi_3''(1, z)
\end{pmatrix}
\det\begin{pmatrix}
\phi_1'(0, z) & \phi_4'(0, z) \\
(\phi_1'''+p\phi_1')(0, z) & (\phi_4'''+p\phi_4')(0, z)
\end{pmatrix} \\
&\phantom{123}+ \det\begin{pmatrix}
\phi_1(1, z) & \phi_4(1, z) \\
\phi_1''(1, z) & \phi_4''(1, z)
\end{pmatrix}
\det\begin{pmatrix}
\phi_2'(0, z) & \phi_3'(0, z) \\
(\phi_2'''+p\phi_2')(0, z) & (\phi_3'''+p\phi_3')(0, z)
\end{pmatrix} \\
&\phantom{123}+
\det\begin{pmatrix}
\phi_1(1, z) & \phi_3(1, z) \\
\phi_1''(1, z) & \phi_3''(1, z)
\end{pmatrix}
\det\begin{pmatrix}
(\phi_2'''+p\phi_2')(0, z) & (\phi_4'''+p\phi_4')(0, z) \\
\phi_2'(0, z) & \phi_4'(0, z)
\end{pmatrix} \\
&\phantom{123}+ \det\begin{pmatrix}
\phi_1(1, z) & \phi_2(1, z) \\
\phi_1''(1, z) & \phi_2''(1, z)
\end{pmatrix}
\det\begin{pmatrix}
\phi_3'(0, z) & \phi_4'(0, z) \\
(\phi_3'''+p\phi_3')(0, z) & (\phi_4'''+p\phi_4')(0, z)
\end{pmatrix}.
\end{aligned}
\end{equation}
The asymptotics \eqref{5.9} yield
\begin{flalign*}
&\det\begin{pmatrix}
\phi_2(1, z) & \phi_3(1, z) \\
\phi_2''(1, z) & \phi_3''(1, z)
\end{pmatrix}
\det\begin{pmatrix}
\phi_1'(0, z) & \phi_4'(0, z) \\
(\phi_1'''+p\phi_1')(0, z) & (\phi_4'''+p\phi_4')(0, z)
\end{pmatrix} \\
&= \det\begin{pmatrix}
e^{iz}(1+\mathcal{O}(z^{-1})) & e^{-iz}(1+\mathcal{O}(z^{-1})) \\
-z^2e^{iz}(1+\mathcal{O}(z^{-1})) & -z^2e^{-iz}(1+\mathcal{O}(z^{-1}))
\end{pmatrix}
\det\begin{pmatrix}
-z(1+\mathcal{O}(z^{-1})) & z(1+\mathcal{O}(z^{-1})) \\
-z^3(1+\mathcal{O}(z^{-1})) & z^3(1+\mathcal{O}(z^{-1}))
\end{pmatrix} \\
&=z\mathcal{O}(1)\cdot z^3\mathcal{O}(1)=\mathcal{O}(z^4)
\end{flalign*}
and
\begin{flalign*}
&\det\begin{pmatrix}
\phi_1(1, z) & \phi_4(1, z) \\
\phi_1''(1, z) & \phi_4''(1, z)
\end{pmatrix}
\det\begin{pmatrix}
\phi_2'(0, z) & \phi_3'(0, z) \\
(\phi_2'''+p\phi_2')(0, z) & (\phi_3'''+p\phi_3')(0, z) \\
\end{pmatrix}\\
&=\det\begin{pmatrix}
e^{-z}(1+\mathcal{O}(z^{-1})) & e^{z}(1+\mathcal{O}(z^{-1})) \\
z^2e^{-z}(1+\mathcal{O}(z^{-1})) & z^2e^{z}(1+\mathcal{O}(z^{-1}))
\end{pmatrix}
\det\begin{pmatrix}
iz(1+\mathcal{O}(z^{-1})) & -iz(1+\mathcal{O}(z^{-1})) \\
-iz^3(1+\mathcal{O}(z^{-1})) & iz^3(1+\mathcal{O}(z^{-1}))
\end{pmatrix} \\
&=z\mathcal{O}(1)\cdot z^3\mathcal{O}(1)=\mathcal{O}(z^4).
\end{flalign*}
Using the estimates $0\leqslant\mathrm{Im}\,z\leqslant (\sqrt{2}-1)\mathrm{Re}\,z$ and the
asymptotics \eqref{5.9}, we get
\begin{flalign*}
&\det\begin{pmatrix}
\phi_1(1, z) & \phi_3(1, z) \\
\phi_1''(1, z) & \phi_3''(1, z)
\end{pmatrix}
\det\begin{pmatrix}
(\phi_2'''+p\phi_2')(0, z) & (\phi_4'''+p\phi_4')(0, z) \\
\phi_2'(0, z) & \phi_4'(0, z)
\end{pmatrix} \\
&= \det\begin{pmatrix}
e^{-z}(1+\mathcal{O}(z^{-1})) & e^{-iz}(1+\mathcal{O}(z^{-1})) \\
z^2e^{-z}(1+\mathcal{O}(z^{-1})) & -z^2e^{-iz}(1+\mathcal{O}(z^{-1}))
\end{pmatrix}
\det\begin{pmatrix}
-iz^3(1+\mathcal{O}(z^{-1})) & z^3(1+\mathcal{O}(z^{-1})) \\
iz(1+\mathcal{O}(z^{-1})) & z(1+\mathcal{O}(z^{-1}))
\end{pmatrix} \\
&=e^{-z-iz}\mathcal{O}(z^6) = e^{-\mathrm{Re}\,z+\mathrm{Im}\,z}\mathcal{O}(z^6)
=e^{(\sqrt{2}-2)\mathrm{Re}\,z}\mathcal{O}(z^6)
\end{flalign*}
and
\begin{flalign*}
&\det\begin{pmatrix}
\phi_1(1, z) & \phi_2(1, z) \\
\phi_1''(1, z) & \phi_2''(1, z)
\end{pmatrix}
\det\begin{pmatrix}
\phi_3'(0, z) & \phi_4'(0, z) \\
(\phi_3'''+p\phi_3')(0, z) & (\phi_4'''+p\phi_4')(0, z)
\end{pmatrix}\\
&=\det\begin{pmatrix}
e^{-z}(1+\mathcal{O}(z^{-1})) & e^{iz}(1+\mathcal{O}(z^{-1})) \\
z^2e^{-z}(1+\mathcal{O}(z^{-1})) & -z^2e^{iz}(1+\mathcal{O}(z^{-1}))
\end{pmatrix}
\det\begin{pmatrix}
-iz(1+\mathcal{O}(z^{-1})) & z(1+\mathcal{O}(z^{-1})) \\
iz^3(1+\mathcal{O}(z^{-1})) & z^3(1+\mathcal{O}(z^{-1}))
\end{pmatrix} \\
&=e^{-z+iz}\mathcal{O}(z^6) = e^{-\mathrm{Re}\,z-\mathrm{Im}\,z}\mathcal{O}(z^6)
=e^{-\sqrt{2}\mathrm{Re}\,z}\mathcal{O}(z^6).
\end{flalign*}
Substituting these asymptotics into \eqref{xiprom}, we obtain \eqref{5.15}. Moreover, the
asymptotics \eqref{5.9} and the estimates $0\leqslant\mathrm{Im}\,z\leqslant
(\sqrt{2}-1)\mathrm{Re}\,z$ yield
\begin{flalign*}
&\xi_1(z)\xi_2(z) + \xi_3(z)\xi_4(z) =
\det\begin{pmatrix}
\phi_3(1, z) & \phi_4(1, z) \\
\phi_3''(1, z) & \phi_4''(1, z)
\end{pmatrix}
\det\begin{pmatrix}
\phi_1'(0, z) & \phi_2'(0, z) \\
(\phi_1'''+p\phi_1')(0, z) & (\phi_2'''+p\phi_2')(0, z)
\end{pmatrix} \\
&\phantom{13}+\det\begin{pmatrix}
\phi_2(1, z) & \phi_4(1, z) \\
\phi_2''(1, z) & \phi_4''(1, z)
\end{pmatrix}
\det\begin{pmatrix}
\phi_3'(0, z) & \phi_1'(0, z) \\
(\phi_3'''+p\phi_3')(0, z) & (\phi_1'''+p\phi_1')(0, z)
\end{pmatrix} \\
&= \det\begin{pmatrix}
e^{-iz}(1+\mathcal{O}(z^{-1})) & e^{z}(1+\mathcal{O}(z^{-1})) \\
-z^2e^{-iz}(1+\mathcal{O}(z^{-1})) & z^2e^{z}(1+\mathcal{O}(z^{-1}))
\end{pmatrix}
\det\begin{pmatrix}
-z(1+\mathcal{O}(z^{-1})) & iz(1+\mathcal{O}(z^{-1})) \\
-z^3(1+\mathcal{O}(z^{-1})) & -iz^3(1+\mathcal{O}(z^{-1}))
\end{pmatrix} \\
&\phantom{13}+
\det\begin{pmatrix}
e^{iz}(1+\mathcal{O}(z^{-1})) & e^{z}(1+\mathcal{O}(z^{-1})) \\
-z^2e^{iz}(1+\mathcal{O}(z^{-1})) & z^2e^{z}(1+\mathcal{O}(z^{-1}))
\end{pmatrix}
\det\begin{pmatrix}
-iz(1+\mathcal{O}(z^{-1})) & -z(1+\mathcal{O}(z^{-1})) \\
iz^3(1+\mathcal{O}(z^{-1})) & -z^3(1+\mathcal{O}(z^{-1}))
\end{pmatrix} \\
&= e^{z-iz}\mathcal{O}(z^6)+e^{z+iz}\mathcal{O}(z^6)=
e^{z-iz}\mathcal{O}(z^6)=e^{\mathrm{Re}\,z+\mathrm{Im}\,z}\mathcal{O}(z^6)
=e^{\sqrt{2}\mathrm{Re}\,z}\mathcal{O}(z^6).
\end{flalign*}
This gives \eqref{asymptxi}.
\end{proof}

\subsection{Counting Lemma.}\label{subsecCount}
Let $p$, $q\in L^1(\mathbb{T})$. Now we prove Counting Lemma for the zeros of the
function $D$ defined by~\eqref{3.9}. Introduce the domains $\mathcal{D}_n$, $n\in\mathbb{Z}_+$, by
\[
\mathcal{D}_n= \Big\{\lambda\in\mathbb{C}: |z-\frac{\pi}{2}-\pi n|<\frac{\pi}{4}\Big\},
\quad n\in\mathbb{Z}_+,
\]
and the contours
\begin{equation}\label{contcN}
C_a(r)=\{\lambda\in\mathbb{C}: |z-a|=r\}, \quad a\in\mathbb{C}, \quad r>0.
\end{equation}
\begin{lemma}\label{lh5.2}
Let $p$, $q\in L^1(\mathbb{T})$. Then\\
i) The function $\det A(0, z)$ satisfies
\begin{equation}\label{5.8}
\det A(0, z)=-16iz^6(1+\mathcal{O}(z^{-1})),
\end{equation}
as $|z|\to +\infty$. Moreover,
\begin{equation}\label{5.10}
D(\lambda)=D_0(\lambda)(1+\mathcal{O}(z^{-1}))
\end{equation}
as $|\lambda|\to +\infty$, $\lambda\in\mathbb{C}\setminus\cup_{n\in\mathbb{Z}_+}\mathcal{D}_n$.\\
ii) For each integer $N\geqslant 0$ large enough the function $D$ has
(counting with multiplicities) $N+1$ zeros in the disk
$\{|\lambda|<\big(\pi/2+\pi(N+1/2)\big)^4\}$ and for each $n>N$ it has exactly one
simple real zero in the domain $\mathcal{D}_n$.
\end{lemma}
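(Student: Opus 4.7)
The plan is to derive part~i) by separately asymptoting the two determinants in the identity $D=\det\phi/\det A(0,\cdot)$ from~\eqref{3.9}, and then to deduce part~ii) from~\eqref{5.10} by Rouch\'e's theorem applied on the natural $z$-contours surrounding the unperturbed zeros~$\mu_n^0$ of~$D_0$.

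For~\eqref{5.8} I would evaluate the factorization~\eqref{5.2} at $x=0$: the exponential factor then reduces to the identity and the bracketed factor is $\mathbb{I}_4+\mathcal{O}(z^{-1})$, so~\eqref{deta0} immediately gives the claim. For~\eqref{5.10} I would work first inside $\mathcal{Z}_+^+$ and begin from Lemma~\ref{lh5.3}, which reduces $\det\phi(z)$ to the pair of minor products $\xi_1\xi_2+\xi_3\xi_4$ modulo an $\mathcal{O}(z^4)$ error. Substituting the leading asymptotics~\eqref{5.9} into the definitions \eqref{xi12}--\eqref{xi34} and evaluating the unperturbed case directly, each minor product should come out to $4iz^6 e^{z\mp iz}(1+\mathcal{O}(z^{-1}))$, and therefore
\[
\xi_1\xi_2+\xi_3\xi_4=8iz^6 e^z\cos z\,\bigl(1+\mathcal{O}(z^{-1})\bigr).
\]
Since $\det\phi_0(z)=16iz^6\cos z\cos(iz)=8iz^6\cos z\,(e^z+e^{-z})$ by~\eqref{xi0}, the difference $\det\phi_0-(\xi_1\xi_2+\xi_3\xi_4)$ reduces to the exponentially smaller term $8iz^6 e^{-z}\cos z$; combined with the $\mathcal{O}(z^4)$ error of Lemma~\ref{lh5.3}, this should yield $\det\phi(z)=\det\phi_0(z)(1+\mathcal{O}(z^{-1}))$ on $\mathcal{Z}_+^+\setminus\cup_n\mathcal{D}_n$. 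Dividing by~\eqref{5.8} and using~\eqref{3.9} then gives~\eqref{5.10} in that region. The complementary sub-sector $\arg z\in[\pi/8,\pi/4)$ would be handled by the analogous minor decomposition with a different dominant pair of $\omega_j$'s, and the lower half $\arg z\in(-\pi/4,0)$ by the reality symmetry $D(\bar\lambda)=\overline{D(\lambda)}$ that holds because $p$ and $q$ are real.

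For part~ii), armed with~\eqref{5.10}, I would apply Rouch\'e's theorem on the $z$-circles $C_0(\pi/2+\pi(N+1/2))$ and $C_{\pi/2+\pi n}(\pi/4)$, $n>N$, defined in~\eqref{contcN}. All of these contours lie outside~$\cup_n\mathcal{D}_n$, so on each of them
\[
|D(\lambda)-D_0(\lambda)|=|D_0(\lambda)|\cdot\mathcal{O}(z^{-1})<|D_0(\lambda)|
\]
once $N$ is large enough. Since $D_0(\lambda)=-\cos z\cos(iz)$ has precisely the $N+1$ simple zeros $\mu_0^0,\ldots,\mu_N^0$ inside the large $z$-disk and exactly one simple zero $\mu_n^0$ inside each $\mathcal{D}_n$ with $n>N$, Rouch\'e's theorem produces the same counts for~$D$. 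The single zero of $D$ inside $\mathcal{D}_n$ then has algebraic multiplicity one (Rouch\'e counts with multiplicity) and is real, since $H$ is self-adjoint and hence $\sigma(H)=\{\lambda:D(\lambda)=0\}\subset\mathbb{R}$.

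The hardest step will be the final stage of~\eqref{5.10}: converting the absolute envelope $e^{\sqrt 2\,\mathrm{Re}\,z}\mathcal{O}(z^6)$ from~\eqref{asymptxi} into a genuinely \emph{relative} error against~$D_0$. This requires a uniform lower bound $|\cos z|\geqslant c>0$ on $\mathbb{C}\setminus\cup_n\mathcal{D}_n$, which keeps $|D_0(\lambda)|$ of order $e^{|\mathrm{Re}\,z|+|\mathrm{Im}\,z|}$ away from the discs and lets it dominate, by a factor of $z^{-1}$, both the $\mathcal{O}(z^4)$ remainder from Lemma~\ref{lh5.3} and the exponentially smaller correction $8iz^6 e^{-z}\cos z$. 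Once this envelope-to-relative-error swap is carried out, the Rouch\'e comparison in part~ii) closes with margin on every contour.
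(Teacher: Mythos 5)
Your proposal is correct and follows essentially the same route as the paper: part~i) via the factorization \eqref{5.2} at $x=0$ for \eqref{5.8} and the minor decomposition of Lemma~\ref{lh5.3} for \eqref{5.10}, and part~ii) via Rouch\'e's theorem on the contours \eqref{contcN} with realness from self-adjointness. You are in fact more explicit than the published proof at two points it glosses over, namely the lower bound on $|\cos z|$ off the discs $\mathcal{D}_n$ that converts the envelope \eqref{asymptxi} into the relative error \eqref{5.10}, and the extension of that estimate from $\mathcal{Z}_+^+$ to the rest of $\mathcal{Z}_+$ and to $\arg z<0$ by conjugate symmetry.
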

\begin{proof}
i) The definition \eqref{2.3} yields $\det\Omega= -16iz^6$. Therefore, the
asymptotics~\eqref{5.2} gives~\eqref{5.8}.

Let $z\in\mathcal{Z}_+$ and $|z|\to +\infty$. The asymptotics \eqref{5.15} implies that
\[
\det\phi_0=\xi_1(z)\xi_2(z)+\xi_3(z)\xi_4(z)+\mathcal{O}(z^4), \quad z\in\mathcal{Z}_+^+,
\]
where $\xi_j$, $j=1, 2, 3, 4$, are defined by \eqref{xi12} and \eqref{xi34}.
The definition \eqref{xi0} yields $\det\phi_0\ne 0$ for all $|z|$ large enough. Therefore,
using again the asymptotics \eqref{5.15}, we have
\[
\frac{\det\phi(z)}{\det\phi_0(z)}=1+\mathcal{O}(z^{-1}).
\]
This asymptotics and equation \eqref{3.5} give
\[
\det\phi(z)=-16iz^6D_0(\lambda)(1+\mathcal{O}(z^{-1})).
\]
Substituting the last asymptotics and \eqref{5.8} into~\eqref{3.9}, we obtain \eqref{5.10}.

ii) Let $N\geqslant 0$ be integer and large enough and let $N'>N$ be another integer.
Let $\lambda$ belong to the contours $C_0((\pi/2+\pi(N+1/2))^4)$,
$C_0((\pi/2+\pi(N'+1/2))^4)$, and $\partial\mathcal{D}_n$ for all $n>N$.
Using~\eqref{5.10}, we get
\[
|D(\lambda)-D_0(\lambda)|\leqslant \mathcal{O}(|z|^{-1})|D_0(\lambda)|<|D_0(\lambda)|
\]
on all contours. Hence, by Rouche's theorem, $D$ has the same number of zeros as
the function $D_0$ in each of the bounded domains and the remaining unbounded domain.
The function $D_0$ has exactly one simple zero $\lambda_n=\big(\pi/2+\pi n\big)^4$,
$n\in\mathbb{Z}_+$. Therefore, the function $D$ has $N+1$ zeros in the disk
$\{|\lambda|<\big(\pi/2+\pi(N+1/2)\big)^4\}$ and for each $n>N$ exactly one
simple zero in the domain $\mathcal{D}_n$. Since $N'>N$ can be chosen arbitrary large,
the statement of Lemma follows.

We have to prove that the zero of $D$ in the domain $\mathcal{D}_n$, $n>N$, is real.
The reasoning above shows that the function $D$ has one zero in $\mathcal{D}_n$.
Let $\mu\in\mathcal{D}_n$ be a zero of function $D$. If $\mu\notin\mathbb{R}$, then
there is another zero $\overline{\mu}\in\mathcal{D}_n$.
Therefore, there are two zeros of $D$ in $\mathcal{D}_n$.
We get a contradiction with the previous statement. Hence $\mu$ is real.
\end{proof}

\section{Eigenvalues for $p$, $q\in L^1(\mathbb{T})$}\label{sec3}
Our next goal is to obtain high energy asymptotics for the eigenvalues. For this we need
to analyse the formula \eqref{5.15} for the function $\det\phi$.
The definitions \eqref{5.3}, \eqref{t3''}, and \eqref{t3'''} yield
\begin{flalign}
\alpha_\sigma(z) &=
\int_0^1T_{\sigma, 2}(s, z)\,ds=-\int_0^1T_{\sigma, 3}(s, z)\,ds, \quad
z\in\mathcal{Z}_+, \label{th23+}\\
\beta_\sigma(z) &= \int_0^1T_{\sigma, 1}(s, z)\,ds = -\int_0^1T_{\sigma, 4}(s, z)\,ds,
\quad z\in\mathcal{Z}_+,\label{th14-}
\end{flalign}
for $\sigma=1, 2, 3, 4$, where $T_{\sigma, j}$ are entries of the matrices
$T_\sigma=(T_{\sigma, j})_{j=1}^4$ and the functions $\alpha_\sigma$ and $\beta_\sigma$
defined by
\begin{align}
\alpha_1(z) &= \alpha_2(z)=1+\frac{p_0}{4z^2}, \qquad \qquad \qquad \qquad
\beta_1(z)=\beta_2(z)=i-\frac{ip_0}{4z^2},\label{alpha+beta}\\
\alpha_3(z) &= \alpha_4(z)=1+\frac{\|p\|^2}{32z^4}-\frac{q_0}{4z^4}+\frac{p_0}{4z^2}, \qquad
\beta_3(z)=\beta_4(z)=i+\frac{i\|p\|^2}{32z^4}-\frac{iq_0}{4z^4}
-\frac{ip_0}{4z^2}.\label{alpha+beta'''}
\end{align}
Here we used the identities
\begin{equation}\label{intq'pp'}
\int_0^1q'(x)\,dx=q(1)-q(0)=0, \qquad \qquad \int_0^1p(x)p'(x)\,dx=0.
\end{equation}

We have the following result.
\begin{lemma}
Let $p$, $q\in L^1(\mathbb{T})$ and let $|z|\to +\infty$. Then the function $\det\phi$
defined by \eqref{5.15} has the form
\begin{equation}\label{xi1234}
\begin{aligned}
\det\phi(z) &= 4iz^6e^{-i\beta_\sigma z}\Big(2\cos\alpha_\sigma z+
e^{-iz\alpha_\sigma}\gamma_{\sigma, 1}(z)+
e^{iz\alpha_\sigma}\gamma_{\sigma, 2}(z)\Big), \quad z\in\mathcal{Z}_+,
\end{aligned}
\end{equation}
where $\alpha_\sigma=\alpha_\sigma(z)$, $\beta_\sigma=\beta_\sigma(z)$,
and $\gamma_{\sigma, 1}$, $\gamma_{\sigma, 2}$, $\sigma=1, 2, 3, 4$,
satisfy the asymptotics
\begin{equation}\label{gamma}
\begin{aligned}
\gamma_{\sigma, 1}(z) &= \Big(\zeta_{\sigma, 23}+\zeta_{\sigma, 33}+\zeta_{\sigma, 14}+
\zeta_{\sigma, 44}\Big)(1, z)+
\Big(\zeta_{\sigma, 11}-\zeta_{\sigma, 41}+\zeta_{\sigma, 22}-\zeta_{\sigma, 32}\Big)(0, z) \\
&\phantom{123}+\Big(\zeta_{\sigma, 23}(1, z)+\zeta_{\sigma, 33}(1, z)\Big)
\Big(\zeta_{\sigma, 14}(1, z)+\zeta_{\sigma, 44}(1, z)\Big) \\
&\phantom{123}- \Big(\zeta_{\sigma, 13}(1, z)+\zeta_{\sigma, 43}(1, z)\Big)
\Big(\zeta_{\sigma, 24}(1, z)+\zeta_{\sigma, 34}(1, z)\Big) \\
&\phantom{123}+\Big(\zeta_{\sigma, 11}(0, z)-\zeta_{\sigma, 41}(0, z)\Big)
\Big(\zeta_{\sigma, 22}(0, z)-\zeta_{\sigma, 32}(0, z)\Big)\\
&\phantom{123}- \Big(\zeta_{\sigma, 31}(0, z)-\zeta_{\sigma, 21}(0, z)\Big)
\Big(\zeta_{\sigma, 42}(0, z)-\zeta_{\sigma, 12}(0, z)\Big) \\
&\phantom{123}+ \Big(\zeta_{\sigma, 23}(1, z)+\zeta_{\sigma, 33}(1, z)+
\zeta_{\sigma, 14}(1, z)+\zeta_{\sigma, 44}(1, z)\Big) \\
&\phantom{123}\times\Big(\zeta_{\sigma, 11}(0, z)-\zeta_{\sigma, 41}(0, z)
+\zeta_{\sigma, 22}(0, z)-\zeta_{\sigma, 32}(0, z)\Big)+\mathcal{O}(z^{-\sigma-1}), \\
\gamma_{\sigma, 2}(z) &= (\zeta_{\sigma, 22}+\zeta_{\sigma, 32}+\zeta_{\sigma, 14}
+\zeta_{\sigma, 44})(1, z)+(\zeta_{\sigma, 33}-\zeta_{\sigma, 23}+\zeta_{\sigma, 11}
-\zeta_{\sigma, 41})(0, z) \\
&\phantom{123}+\Big(\zeta_{\sigma, 22}(1, z)+\zeta_{\sigma, 32}(1, z)\Big)
\Big(\zeta_{\sigma, 14}(1, z)+\zeta_{\sigma, 44}(1, z)\Big) \\
&\phantom{123}- \Big(\zeta_{\sigma, 12}(1, z)+\zeta_{\sigma, 42}(1, z)\Big)
\Big(\zeta_{\sigma, 24}(1, z)+\zeta_{\sigma, 34}(1, z)\Big) \\
&\phantom{123}+\Big(\zeta_{\sigma, 33}(0, z)-\zeta_{\sigma, 23}(0, z)\Big)
\Big(\zeta_{\sigma, 11}(0, z)-\zeta_{\sigma, 41}(0, z)\Big) \\
&\phantom{123}- \Big(\zeta_{\sigma, 43}(0, z)-\zeta_{\sigma, 13}(0, z)\Big)
\Big(\zeta_{\sigma, 21}(0, z)-\zeta_{\sigma, 31}(0, z)\Big) \\
&\phantom{123}+ \Big(\zeta_{\sigma, 22}(1, z)+\zeta_{\sigma, 32}(1, z)+
\zeta_{\sigma, 14}(1, z)+\zeta_{\sigma, 44}(1, z)\Big) \\
&\phantom{123}\times\Big(\zeta_{\sigma, 33}(0, z)-\zeta_{\sigma, 23}(0, z)
+\zeta_{\sigma, 11}(0, z)-\zeta_{\sigma, 41}(0, z)\Big)+\mathcal{O}(z^{-\sigma-1}).
\end{aligned}
\end{equation}
Here $\zeta_{\sigma, lj}$, $\sigma$, $l$, $j=1, 2, 3, 4$, are defined by~\eqref{4.31}.
\end{lemma}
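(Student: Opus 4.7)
The plan is to substitute the factorizations of Lemma~\ref{cor5.1} into the formula $\det\phi=\xi_1\xi_2+\xi_3\xi_4+\mathcal{O}(z^4)$ of Lemma~\ref{lh5.3}, pull out the exponential prefactors carried by each column, and expand the resulting $2\times 2$ determinants to second order in the small quantities $\zeta_{\sigma,lj}$.

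The key observation is that in \eqref{4.30} column $j$ of $A$ carries the scalar factor $c_{\sigma,j}$, so each $\xi_k$ factors as $c_{\sigma,j_1}(x_k)\,c_{\sigma,j_2}(x_k)$ times a determinant built from $\Omega$-row-sums of the $\zeta$-corrected central matrix. Using \eqref{th23+}--\eqref{th14-} the prefactors are $c_{\sigma,3}(1)c_{\sigma,4}(1)=e^{-iz(\alpha_\sigma+\beta_\sigma)}$ for $\xi_1$, $c_{\sigma,2}(1)c_{\sigma,4}(1)=e^{iz(\alpha_\sigma-\beta_\sigma)}$ for $\xi_3$, and unity for $\xi_2,\xi_4$ (since $c_{\sigma,j}(0,z)=1$). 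Factoring $e^{-iz\beta_\sigma}$ out of the sum will put \eqref{xi1234} in the claimed shape, the term $2\cos\alpha_\sigma z$ coming from the leading (unperturbed) contributions; a sanity check in the case $p=q=0$ reproduces $\xi_1\xi_2+\xi_3\xi_4=8iz^6 e^z\cos z$, compatible with \eqref{xi0}.

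Next I would compute each $2\times 2$ determinant explicitly. For $\xi_1$ at $x=1$ using rows $1$ and $3$ of $\Omega$, the leading determinant equals $2z^2$, the linear-in-$\zeta$ correction equals $2(\zeta_{\sigma,23}+\zeta_{\sigma,33}+\zeta_{\sigma,14}+\zeta_{\sigma,44})(1,z)$, and the quadratic block telescopes via the identity
\[
(x+y+z+w)(p-q-r+s)-(p+q+r+s)(x-y-z+w)=2\bigl[(y+z)(p+s)-(x+w)(q+r)\bigr]
\]
into $(\zeta_{\sigma,23}+\zeta_{\sigma,33})(\zeta_{\sigma,14}+\zeta_{\sigma,44})(1,z)-(\zeta_{\sigma,13}+\zeta_{\sigma,43})(\zeta_{\sigma,24}+\zeta_{\sigma,34})(1,z)$. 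An analogous calculation for $\xi_2$ at $x=0$, using rows $2$ and $4$ of $\Omega$ and the simplification
\[
(1+u-iv)(1+w-it)+(1+u+iv)(1+w+it)=2\bigl[(1+u)(1+w)-vt\bigr],
\]
yields the factor $2iz^4$ multiplied by $1+(\zeta_{\sigma,11}-\zeta_{\sigma,41}+\zeta_{\sigma,22}-\zeta_{\sigma,32})(0,z)$ together with the quadratic piece $(\zeta_{\sigma,11}-\zeta_{\sigma,41})(\zeta_{\sigma,22}-\zeta_{\sigma,32})(0,z)-(\zeta_{\sigma,31}-\zeta_{\sigma,21})(\zeta_{\sigma,42}-\zeta_{\sigma,12})(0,z)$. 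Forming $\xi_1\xi_2=4iz^6 e^{-iz(\alpha_\sigma+\beta_\sigma)}(1+L_1+L_2+Q_1+Q_2+L_1L_2+\cdots)$ and collecting the intrinsic-linear, intrinsic-quadratic, and cross terms reproduces the six line-groups defining $\gamma_{\sigma,1}$; the parallel computation for $\xi_3\xi_4$ (columns $2,4$ at $x=1$ and $3,1$ at $x=0$) yields $\gamma_{\sigma,2}$. The $\mathcal{O}(z^4)$ remainder from \eqref{5.15} is of relative order $\mathcal{O}(z^{-2})$ with respect to the leading factor $4iz^6$, hence absorbed into the $\mathcal{O}(z^{-\sigma-1})$ error of each $\gamma_{\sigma,k}$ for every $\sigma\in\{1,2,3,4\}$.

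The main obstacle is the algebraic bookkeeping in the second step: before simplification each quadratic block contains sixteen mixed products of $\zeta$-entries, and one must check that the alternating signs imposed by the $\Omega$-rows $(1,-1,-1,1)$ in row $3$, together with $(-z,iz,-iz,z)$ and $(-z^3,-iz^3,iz^3,z^3)$ in rows $2$ and $4$, produce exactly the cancellations leaving the compact pairwise $\zeta$-products appearing in \eqref{gamma}. Verifying that the correct pairing and signs arise for all four determinants $\xi_1,\xi_2,\xi_3,\xi_4$, and that the cross terms $L_1L_2$ from the products $\xi_1\xi_2$ and $\xi_3\xi_4$ match the last group in $\gamma_{\sigma,1}$ and $\gamma_{\sigma,2}$, is the only genuinely delicate step.
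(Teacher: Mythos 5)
Your proposal follows essentially the same route as the paper's proof: substitute the factorization \eqref{4.30} from Lemma~\ref{cor5.1} into the four $2\times2$ determinants \eqref{xi12}--\eqref{xi34}, pull out the column factors $c_{\sigma,j}$ via \eqref{th23+}--\eqref{th14-} to isolate the exponential prefactors, expand each $2\times2$ determinant to second order in the bounded quantities $\zeta_{\sigma,lj}$, and substitute the resulting expressions for $\xi_1\xi_2$ and $\xi_3\xi_4$ into \eqref{5.15}. Your identifications of the prefactors ($e^{-iz(\alpha_\sigma+\beta_\sigma)}$ for $\xi_1$, $e^{iz(\alpha_\sigma-\beta_\sigma)}$ for $\xi_3$, unity for $\xi_2,\xi_4$), the leading-order $2z^2$ and $2iz^4$ factors, the linear and telescoped quadratic $\zeta$-blocks, the origin of $2\cos\alpha_\sigma z$, and the absorption of the $\mathcal{O}(z^4)$ remainder into the $\mathcal{O}(z^{-\sigma-1})$ error are all exactly what the paper does in \eqref{xi1prom}--\eqref{xi4prom}; your unperturbed sanity check $\xi_1\xi_2+\xi_3\xi_4=8iz^6 e^z\cos z$ is also correct.
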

\begin{proof}
Let $z\in\mathcal{Z}_+$, $|z|\to +\infty$. Substituting \eqref{4.30} into \eqref{xi12}
and using \eqref{th23+} and \eqref{th14-}, we get
\begin{equation}\label{xi1prom}
\begin{aligned}
\xi_1(z) &=z^2e^{-iz\alpha_\sigma-iz\beta_\sigma}\Big(1+(\zeta_{\sigma, 13}+
\zeta_{\sigma, 23}+\zeta_{\sigma, 33}+\zeta_{\sigma, 43})(1, z)+\mathcal{O}(z^{-\sigma-1})\Big) \\
&\phantom{123}\times\Big(1+(\zeta_{\sigma, 14}-\zeta_{\sigma, 24}-\zeta_{\sigma, 34}+
\zeta_{\sigma, 44})(1, z)+\mathcal{O}(z^{-\sigma-1})\Big) \\
&\phantom{123}-z^2e^{-iz\alpha_\sigma-iz\beta_\sigma}\Big(-1+(\zeta_{\sigma, 13}
-\zeta_{\sigma, 23}-\zeta_{\sigma, 33}+\zeta_{\sigma, 43})(1, z)
+\mathcal{O}(z^{-\sigma-1})\Big) \\
&\phantom{123}\Big(1+(\zeta_{\sigma, 14}+\zeta_{\sigma, 24}+\zeta_{\sigma, 34}
+\zeta_{\sigma, 44})(1, z)+\mathcal{O}(z^{-\sigma-1})\Big) \\
&=2z^2e^{-iz\alpha_\sigma-iz\beta_\sigma}\bigg(1+(\zeta_{\sigma, 23}
+\zeta_{\sigma, 33}+\zeta_{\sigma, 14}+\zeta_{\sigma, 44})(1, z) \\
&\phantom{123}+ \big(\zeta_{\sigma, 23}(1, z)+\zeta_{\sigma, 33}(1, z)\big)
\big(\zeta_{\sigma, 14}(1, z)+\zeta_{\sigma, 44}(1, z)\big)\\
&\phantom{123}- \big(\zeta_{\sigma, 13}(1, z)+\zeta_{\sigma, 43}(1, z)\big)
\big(\zeta_{\sigma, 24}(1, z)+\zeta_{\sigma, 34}(1, z)\big)+\mathcal{O}(z^{-\sigma-1})\bigg)
\end{aligned}
\end{equation}
and
\begin{equation}\label{xi2prom}
\begin{aligned}
\xi_2(z) &=z^4\Big(-1+(-\zeta_{\sigma, 11}+i\zeta_{\sigma, 21}-i\zeta_{\sigma, 31}+
\zeta_{\sigma, 41})(0, z)+\mathcal{O}(z^{-\sigma-1})\Big) \\
&\phantom{123}\times\Big(-i+(-\zeta_{\sigma, 12}-i\zeta_{\sigma, 22}+i\zeta_{\sigma, 32}+
\zeta_{\sigma, 42})(0, z)+\mathcal{O}(z^{-\sigma-1})\Big) \\
&\phantom{123}-z^4\Big(i+(-\zeta_{\sigma, 12}+i\zeta_{\sigma, 22}-i\zeta_{\sigma, 32}+
\zeta_{\sigma, 42})(0, z)+\mathcal{O}(z^{-\sigma-1})\Big) \\
&\phantom{123}\Big(-1+(-\zeta_{\sigma, 11}-i\zeta_{\sigma, 21}+i\zeta_{\sigma, 31}+
\zeta_{\sigma, 41})(0, z)+\mathcal{O}(z^{-\sigma-1})\Big) \\
&=2iz^4\bigg(1+(\zeta_{\sigma, 11}-\zeta_{\sigma, 41}+\zeta_{\sigma, 22}
-\zeta_{\sigma, 32})(0, z) \\
&\phantom{123}+ \big(\zeta_{\sigma, 11}(0, z)-\zeta_{\sigma, 41}(0, z)\big)
\big(\zeta_{\sigma, 22}(0, z)-\zeta_{\sigma, 32}(0, z)\big)\\
&\phantom{123}- \big(\zeta_{\sigma, 31}(0, z)-\zeta_{\sigma, 21}(0, z)\big)
\big(\zeta_{\sigma, 42}(0, z)-\zeta_{\sigma, 12}(0, z)\big)+\mathcal{O}(z^{-\sigma-1})\bigg).
\end{aligned}
\end{equation}
Recall that the functions $\zeta_{\sigma, lj}$, $\sigma$, $l$, $j=1, 2, 3, 4$, are bounded
(see \S~\ref{subsec2.3}). Substituting \eqref{4.30} into \eqref{xi34} and
using again \eqref{th23+} and \eqref{th14-}, we obtain
\begin{equation}\label{xi3prom}
\begin{aligned}
\xi_3(z) &=z^2e^{iz\alpha_\sigma-iz\beta_\sigma}\Big(1+(\zeta_{\sigma, 12}+
\zeta_{\sigma, 22}+\zeta_{\sigma, 32}+\zeta_{\sigma, 42})(1, z)+\mathcal{O}(z^{-\sigma-1})\Big) \\
&\phantom{123}\times\Big(1+(\zeta_{\sigma, 14}-\zeta_{\sigma, 24}-\zeta_{\sigma, 34}+
\zeta_{\sigma, 44})(1, z)+\mathcal{O}(z^{-\sigma-1})\Big) \\
&\phantom{123}-z^2e^{iz\alpha_\sigma-iz\beta_\sigma}\Big(1+(\zeta_{\sigma, 14}+\zeta_{\sigma, 24}
+\zeta_{\sigma, 34}+\zeta_{\sigma, 44})(1, z)+\mathcal{O}(z^{-\sigma-1})\Big) \\
&\phantom{123}\Big(-1+(\zeta_{\sigma, 12}-\zeta_{\sigma, 22}-\zeta_{\sigma, 32}+
\zeta_{\sigma, 42})(1, z)+\mathcal{O}(z^{-\sigma-1})\Big) \\
&=2z^2e^{iz\alpha_\sigma-iz\beta_\sigma}\bigg(1+(\zeta_{\sigma, 22}
+\zeta_{\sigma, 32}+\zeta_{\sigma, 14}+\zeta_{\sigma, 44})(1, z) \\
&\phantom{123}+ \big(\zeta_{\sigma, 22}(1, z)+\zeta_{\sigma, 32}(1, z)\big)
\big(\zeta_{\sigma, 14}(1, z)+\zeta_{\sigma, 44}(1, z)\big)\\
&\phantom{123}- \big(\zeta_{\sigma, 12}(1, z)+\zeta_{\sigma, 42}(1, z)\big)
\big(\zeta_{\sigma, 24}(1, z)+\zeta_{\sigma, 34}(1, z)\big)+\mathcal{O}(z^{-\sigma-1})\bigg),
\end{aligned}
\end{equation}
and
\begin{equation}\label{xi4prom}
\begin{aligned}
\xi_4(z) &= z^4\Big(-i+(-\zeta_{\sigma, 13}+i\zeta_{\sigma, 23}-i\zeta_{\sigma, 33}+
\zeta_{\sigma, 43})(0, z)+\mathcal{O}(z^{-\sigma-1})\Big) \\
&\phantom{123}\times\Big(-1+(-\zeta_{\sigma, 11}-i\zeta_{\sigma, 21}+i\zeta_{\sigma, 31}+
\zeta_{\sigma, 41})(0, z)+\mathcal{O}(z^{-\sigma-1})\Big) \\
&\phantom{123}-z^4\Big(-1+(-\zeta_{\sigma, 11}+i\zeta_{\sigma, 21}-i\zeta_{\sigma, 31}+
\zeta_{\sigma, 41})(0, z)+\mathcal{O}(z^{-\sigma-1})\Big) \\
&\phantom{123}\Big(i+(-\zeta_{\sigma, 13}-i\zeta_{\sigma, 23}+i\zeta_{\sigma, 33}+
\zeta_{\sigma, 43})(0, z)+\mathcal{O}(z^{-\sigma-1})\Big) \\
&=2iz^4\bigg(1+(\zeta_{\sigma, 11}-\zeta_{\sigma, 41}-\zeta_{\sigma, 23}
+\zeta_{\sigma, 33})(0, z) \\
&\phantom{123}+ \big(\zeta_{\sigma, 11}(0, z)-\zeta_{\sigma, 41}(0, z)\big)
\big(\zeta_{\sigma, 33}(0, z)-\zeta_{\sigma, 23}(0, z)\big)\\
&\phantom{123}- \big(\zeta_{\sigma, 21}(0, z)-\zeta_{\sigma, 31}(0, z)\big)
\big(\zeta_{\sigma, 43}(0, z)-\zeta_{\sigma, 13}(0, z)\big)+\mathcal{O}(z^{-\sigma-1})\bigg).
\end{aligned}
\end{equation}
Substituting \eqref{xi1prom}~--~\eqref{xi4prom} into \eqref{5.15}, we obtain \eqref{xi1234}.
\end{proof}

\subsection{Rough eigenvalue asymptotics.}\label{sec3.1}
Recall that the eigenvalues of the operator $H$ are zeros of the entire function $D$
given by~\eqref{1.7}.

We index the eigenvalues $\mu_n$ of the function $D$ by the following way.
It follows from Lemma~\ref{lh5.2} that the function $D$ has (counting with multiplicities)
$N+1$ zeros in the disk $\{|\lambda|<(\pi/2+\pi(N+1/2))^4\}$ for each integer
$N\geq 0$ large enough. The eigenvalues $\mu_n$ inside this domain are labeled by
\[
\mathrm{Re}\,\mu_1\leq \mathrm{Re}\,\mu_2\leq \dots \leq \mathrm{Re}\,\mu_{N+1}.
\]
Each domain $\mathcal{D}_n$, $n>N$, contains one real eigenvalue that we denote by $\mu_n$.

Now we determine the rough asymptotics of the large eigenvalues $\mu_n$.
The identity \eqref{3.9} and the asymptotics \eqref{5.8} show that the large eigenvalues
are zeros of the functions $\det\phi(z)$. Recall that the function $\det\phi(z)$
is analytic in $\mathcal{Z}_+(r)$, where $r>0$ is large enough.
\begin{lemma}\label{lhras}
The eigenvalues $\mu_n$ satisfy
\begin{equation}\label{5.16}
\mu_n=\Big(\frac{\pi}{2}+\pi n\Big)^4+\mathcal{O}(n^2), \quad \text{as} \quad n\to +\infty.
\end{equation}
\end{lemma}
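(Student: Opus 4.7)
The plan is to bootstrap the crude localization of $z_n = \mu_n^{1/4}$ inside the disk $\mathcal{D}_n$ (which only says $z_n = \pi/2 + \pi n + \mathcal{O}(1)$) to the sharper statement $z_n = \pi/2 + \pi n + \mathcal{O}(n^{-1})$, by inserting it into the factorized formula \eqref{xi1234} with $\sigma = 1$ and solving for the perturbation of the cosine zeros. Raising to the fourth power will then produce an $\mathcal{O}(n^2)$ remainder in $\mu_n = z_n^4$, which is exactly \eqref{5.16}.

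First, I would invoke Lemma~\ref{lh5.2}(ii): for $n > N$ the eigenvalue $\mu_n$ is real and lies in $\mathcal{D}_n$, so $z_n$ is a real positive number with $z_n = \pi/2 + \pi n + \delta_n$, $|\delta_n| < \pi/4$. Since by \eqref{5.8} the determinant $\det A(0, z_n)$ does not vanish for large $|z_n|$, identity \eqref{3.9} and $D(\mu_n) = 0$ yield $\det \phi(z_n) = 0$.

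Next, I would apply \eqref{xi1234} with $\sigma = 1$. Because $\mathcal{W}_1 = 0$, each function $\zeta_{1,lj}$ reduces to $\mathcal{B}_{1,lj}/z$; since the kernels $K_{lj}$ from \eqref{4.9} and the off-diagonal entries of $F_1$ are uniformly bounded on $\mathcal{Z}_+$, we have $\zeta_{1,lj} = \mathcal{O}(z^{-1})$, and the expressions \eqref{gamma} then give $\gamma_{1,1}(z), \gamma_{1,2}(z) = \mathcal{O}(z^{-1})$. Along the positive real axis, $\alpha_1(z_n) z_n$ is real (since $p_0 \in \mathbb{R}$), so $|e^{\pm i\alpha_1 z_n}| = 1$; the prefactors $z_n^6$ and $e^{-i\beta_1 z_n}$ are non-vanishing. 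Therefore $\det\phi(z_n) = 0$ reduces to
\[
\cos\bigl(\alpha_1(z_n)\, z_n\bigr) = \mathcal{O}(n^{-1}).
\]
Using \eqref{alpha+beta}, $\alpha_1(z_n) z_n = z_n + p_0/(4 z_n) = \pi/2 + \pi n + \delta_n + \mathcal{O}(n^{-1})$, and the identity $\cos(\pi/2 + \pi n + t) = -(-1)^n \sin t$ gives $\sin(\delta_n + \mathcal{O}(n^{-1})) = \mathcal{O}(n^{-1})$. Since $|\delta_n| < \pi/4$, the sine is bounded below by a constant multiple of its argument, so $\delta_n = \mathcal{O}(n^{-1})$. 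Expanding $z_n^4 = (\pi/2 + \pi n + \mathcal{O}(n^{-1}))^4$ then produces \eqref{5.16}.

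The main obstacle is the careful bookkeeping in the second step: one must check that every $\zeta_{1,lj}$ in the long expressions \eqref{gamma} really contributes $\mathcal{O}(z^{-1})$ on the real axis (the exponentials $e^{\pm i\alpha_1 z_n}$ remaining of unit modulus is crucial here) and that the bracket factor in \eqref{xi1234}, not its exponentially growing prefactor, is the one forced to vanish. Once this reduction to $\cos(\alpha_1 z_n) = \mathcal{O}(n^{-1})$ is in hand, the remaining inversion of cosine near its simple zeros $\pi/2 + \pi n$ is a short routine calculation.
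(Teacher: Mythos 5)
Your proposal is correct and follows essentially the same route as the paper: both start from the crude localization in $\mathcal{D}_n$ from Lemma~\ref{lh5.2}(ii), plug into the factorized identity \eqref{xi1234} with $\sigma=1$, use boundedness of the $\zeta_{1,lj}$ to reduce $\det\phi(z_n)=0$ to $\cos(\cdot)=\mathcal{O}(n^{-1})$ near $\pi/2+\pi n$, and invert the cosine to get $\delta_n=\mathcal{O}(n^{-1})$. The only cosmetic differences are that you keep $\cos(\alpha_1 z_n)$ whereas the paper absorbs the $p_0/(4z)$ correction into the $\mathcal{O}(n^{-1})$ remainder and works with $\cos z$ directly, and you spell out (correctly) the lower bound on $|\sin t|$ for $|t|<\pi/4$ that the paper leaves implicit.
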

\begin{proof}
Let $\lambda=z^4=\mu_n$, $n\to +\infty$. Lemma~\ref{lh5.2} ii) yields
$z=\pi/2+\pi n+\delta_n$, where $\delta_n=\mathcal{O}(1)$.
The asymptotics \eqref{xi1234} implies
\begin{flalign*}
\det\phi(z) &= 4iz^6e^z\Big(e^{-iz}+e^{iz}+\mathcal{O}(n^{-1})\Big) \\
&=4iz^6e^z\Big(2\cos z+\mathcal{O}(n^{-1})\Big)
=4iz^6e^z\Big(2(-1)^{n+1}\sin\delta_n+\mathcal{O}(n^{-1})\Big).
\end{flalign*}
The equation $\det\phi(z)=0$ gives $\delta_n=\mathcal{O}(n^{-1})$. Therefore,
$z=\pi/2+\pi n+\mathcal{O}(n^{-1})$. This implies the asymptotics \eqref{5.16}.
\end{proof}

\subsection{Sharp eigenvalue asymptotics.}
Now we determine sharp eigenvalue asymptotics for the case $p$, $q\in L^1(\mathbb{T})$.
Introduce the sequence
\begin{equation}\label{kappa1}
\varkappa_{1, n}= \frac{1}{4}\int_0^1e^{-\pi(2n+1)s}\big(p(s)-p(1-s)\big)\,ds,
\qquad n\in\mathbb{N}.
\end{equation}
Note that
\begin{equation}\label{asymptkappa1}
\varkappa_{1, n}=\mathcal{O}(1) \quad \text{as} \quad n\to +\infty.
\end{equation}

Now we determine asymptotics of the functions $\gamma_{1, 1}$ and $\gamma_{1, 2}$
defined by \eqref{gamma}.
\begin{lemma}
Let $p$, $q\in L^1(\mathbb{T})$, $z\in\mathcal{Z}_+$, and let
$z=\pi/2+\pi n+\mathcal{O}(n^{-1})$, $n\to +\infty$. Then
\begin{equation}\label{firstpq}
\gamma_{1, 1}(z)= \frac{i\widehat{p}_{cn}}{2z}+
\frac{\varkappa_{1, n}}{z}+\mathcal{O}(n^{-2}),\qquad \qquad
\gamma_{1, 2}(z)=\frac{\varkappa_{1, n}}{z}+\mathcal{O}(n^{-2}),
\end{equation}
where $\varkappa_{1, n}$ has the form \eqref{kappa1}.
\end{lemma}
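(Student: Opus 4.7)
The plan is to apply the general formula \eqref{gamma} with $\sigma=1$ and reduce it to a small number of explicitly computable integrals. Since $\mathcal{W}_1=0$, the definition \eqref{4.31} gives $\zeta_{1,lj}=\mathcal{B}_{1,lj}/z=\mathcal{O}(z^{-1})$, so every quadratic product appearing in \eqref{gamma} is $\mathcal{O}(z^{-2})$ and can be absorbed into the $\mathcal{O}(z^{-\sigma-1})=\mathcal{O}(z^{-2})$ remainder. Only the linear combinations of $\zeta_{1,lj}$ need to be treated.

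Next, I would use \eqref{4.13}--\eqref{4.14*} to observe that most of the surviving linear terms vanish identically at the endpoints: the diagonal entries are zero by construction, while for off-diagonal indices one has $\mathcal{B}_{1,lj}(1,z)=0$ when $l>j$ and $\mathcal{B}_{1,lj}(0,z)=0$ when $l<j$. This collapses the expansion to
\[
\gamma_{1,1}(z)=\zeta_{1,23}(1,z)+\zeta_{1,14}(1,z)-\zeta_{1,32}(0,z)-\zeta_{1,41}(0,z)+\mathcal{O}(n^{-2}),
\]
\[
\gamma_{1,2}(z)=\zeta_{1,14}(1,z)-\zeta_{1,41}(0,z)+\mathcal{O}(n^{-2}).
\]

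The four remaining integrals are evaluated from \eqref{Phi1}: the relevant entries of $F_1=-\frac{p}{4}(P+i\mathcal{T}^3)$ are $\pm ip/4$ and $\pm p/4$, and the exponents in \eqref{4.14}--\eqref{4.14*} involve $\omega_2-\omega_3=2$ and $\omega_1-\omega_4=2i$. After a substitution $u=1-s$ in the $x=1$ integrals the oscillatory pair collects to $\frac{i}{4}\int_0^1 e^{2izs}\bigl(p(s)-p(1-s)\bigr)\,ds$, and the exponentially damped pair to $\frac{1}{4}\int_0^1 e^{-2zs}\bigl(p(s)-p(1-s)\bigr)\,ds$. The hypothesis $2z=\pi(2n+1)+\mathcal{O}(n^{-1})$ lets one freeze the exponentials at $e^{i\pi(2n+1)s}$ and $e^{-\pi(2n+1)s}$ modulo a factor $1+\mathcal{O}(n^{-1})$, which identifies the damped integral with $4\varkappa_{1,n}$ via \eqref{kappa1}, and produces $2\widehat{p}_{cn}$ from the oscillatory one via $\widehat{p}_n+\overline{\widehat{p}_n}=2\widehat{p}_{cn}$ together with $s\mapsto 1-s$ and $e^{i\pi(2n+1)}=-1$. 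Dividing by $z$ then gives \eqref{firstpq}.

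The only delicate step is this last Fourier identification: the two oscillatory integrals (one from $x=1$ in $\zeta_{1,23}$, the other from $x=0$ in $\zeta_{1,32}$) have to combine so that their sum turns into twice the real part of $\widehat{p}_n$ rather than into some spurious complex combination of $\widehat{p}_n$ and $\overline{\widehat{p}_n}$ with the wrong sign; this is where both the sign $e^{i\pi(2n+1)}=-1$ and the minus sign coming from the $l>j$ convention in \eqref{4.14} must cooperate correctly. The remainder of the argument is pure bookkeeping, and $\gamma_{1,2}$ appears as a simpler shadow of $\gamma_{1,1}$ in which the oscillatory contribution is absent.
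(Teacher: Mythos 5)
Your argument is correct and follows the same route as the paper: discard the quadratic products as $\mathcal{O}(z^{-2})$, use \eqref{4.13} and the endpoint vanishing of \eqref{4.14}--\eqref{4.14*} to collapse \eqref{gamma} to the four surviving entries $\mathcal{B}_{1,23}(1,z)$, $\mathcal{B}_{1,14}(1,z)$, $-\mathcal{B}_{1,41}(0,z)$, $-\mathcal{B}_{1,32}(0,z)$, and then identify the oscillatory sum with $2\widehat{p}_{cn}$ via $\overline{\widehat{p}}_n+\widehat{p}_n=2\widehat{p}_{cn}$ and the damped sum with $\varkappa_{1,n}$. (One cosmetic slip: with the $\tfrac14$ prefactor already in your combined integral, the damped contribution is $\varkappa_{1,n}$, not $4\varkappa_{1,n}$; the final result after dividing by $z$ is nevertheless as stated.)
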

\begin{proof}
Let $z=\pi/2+\pi n+\mathcal{O}(n^{-1})$, $n\to +\infty$. The definition \eqref{4.31} implies
\begin{equation}\label{asymptzpq}
\zeta_{1, kj}(x, z)\zeta_{1, ls}(y, z)=\mathcal{O}(z^{-2}), \quad k, j, l, s=1, 2, 3, 4,
\end{equation}
for all $x, y\in [0, 1]$, $z\in\mathcal{Z}_+$. This asymptotics, the first definition in
\eqref{gamma} and the identity \eqref{4.31} yield
\begin{align*}
\gamma_{1, 1}(z) &=\frac{1}{z}\Big(\mathcal{B}_{1, 23}(1, z)+
\mathcal{B}_{1, 33}(1, z)+\mathcal{B}_{1, 14}(1, z)+\mathcal{B}_{1, 44}(1, z)\\
&\phantom{123}+\mathcal{B}_{1, 11}(0, z)-\mathcal{B}_{1, 41}(0, z)
+\mathcal{B}_{1, 22}(0, z)-\mathcal{B}_{1, 32}(0, z)\Big)+\mathcal{O}(z^{-2}).
\end{align*}
Substituting \eqref{4.13}~--~\eqref{4.14*} into this asymptotics and using
\eqref{Phi1}, we get
\begin{flalign*}
\gamma_{1, 1}(z) &=\frac{1}{z}\int_0^1e^{i\pi(2n+1)(1-s)}F_{1, 23}(s)\,ds
+\frac{1}{z}\int_0^1e^{-\pi(2n+1)(1-s)}F_{1, 14}(s)\,ds\\
&\phantom{123}+\frac{1}{z}\int_0^1e^{-\pi(2n+1)s}F_{1, 41}(s)\,ds
+\frac{1}{z}\int_0^1e^{i\pi(2n+1)s}F_{1, 32}(s)\,ds+\mathcal{O}(n^{-2})\\
&= \frac{1}{4z}\int_0^1p(s)\Big(e^{-\pi(2n+1)s}-e^{-\pi(2n+1)(1-s)}\Big)\,ds \\
&\phantom{123}+\frac{i}{4z}\int_0^1p(s)\Big(e^{i\pi(2n+1)s}-e^{i\pi(2n+1)(1-s)}\Big)\,ds
+\mathcal{O}(n^{-2}) \\
&=\frac{i(\overline{\widehat{p}}_n+\widehat{p}_n)}{4z}+
\frac{\varkappa_{1, n}}{z}+\mathcal{O}(n^{-2}).
\end{flalign*}
The identity $\overline{\widehat{p}}_n+\widehat{p}_n=2\widehat{p}_{cn}$ gives
the first asymptotics in \eqref{firstpq}.

Similarly, the second definition in \eqref{gamma}, the asymptotics \eqref{asymptzpq},
and the identity \eqref{4.31} yield
\begin{align*}
\gamma_{1, 2}(z) &= \frac{1}{z}\Big(\mathcal{B}_{1, 22}(1, z)+
\mathcal{B}_{1, 32}(1, z)+\mathcal{B}_{1, 14}(1, z)+\mathcal{B}_{1, 44}(1, z)\\
&\phantom{123}+\mathcal{B}_{1, 33}(0, z)-\mathcal{B}_{1, 23}(0, z)+
\mathcal{B}_{1, 11}(0, z)-\mathcal{B}_{1, 41}(0, z)\Big)+\mathcal{O}(z^{-2}).
\end{align*}
Substituting \eqref{4.13}~--~\eqref{4.14*} into this asymptotics and
using \eqref{Phi1}, we have
\begin{flalign*}
\gamma_{1, 2}(z) &= \frac{1}{z}\int_0^1e^{-\pi(2n+1)(1-s)}F_{1, 14}(s)\,ds
+\frac{1}{z}\int_0^1e^{-\pi(2n+1)s}F_{1, 41}(s)\,ds+\mathcal{O}(n^{-2}) \\
&= \frac{1}{4z}\int_0^1p(s)\Big(e^{-\pi(2n+1)s}-e^{-\pi(2n+1)(1-s)}\Big)\,ds
+\mathcal{O}(n^{-2}).
\end{flalign*}
This yields the second asymptotics in \eqref{firstpq}.
\end{proof}

Now we determine asymptotics of the eigenvalues $\mu_n$.
\begin{lemma}\label{lh5.41}
Let $p$, $q\in L^1(\mathbb{T})$. Then the eigenvalues $\mu_n$ satisfy the asymptotics \eqref{pq+}.
\end{lemma}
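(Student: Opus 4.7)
The plan is to combine the representation \eqref{xi1234} with $\sigma=1$ and the asymptotics \eqref{firstpq} of $\gamma_{1,1}$ and $\gamma_{1,2}$, then bootstrap from the rough estimate in Lemma~\ref{lhras}. Since the prefactor $4iz^6 e^{-i\beta_1 z}$ in \eqref{xi1234} never vanishes in the relevant sector, the eigenvalue equation $\det\phi(z)=0$ is equivalent to
\begin{equation*}
2\cos(\alpha_1 z)+e^{-iz\alpha_1}\gamma_{1,1}(z)+e^{iz\alpha_1}\gamma_{1,2}(z)=0.
\end{equation*}
By Lemma~\ref{lhras}, I write $z=z_n^0+\delta_n$ with $z_n^0=\pi/2+\pi n$ and $\delta_n=\mathcal{O}(n^{-1})$, and note from \eqref{alpha+beta} that $\alpha_1 z=z+p_0/(4z)$.

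Next, standard expansions around $\pi/2+\pi n$, using $\sin(x)=x+\mathcal{O}(x^3)$ for the small quantity $\delta_n+p_0/(4z)$, yield
\begin{align*}
\cos(\alpha_1 z) &= (-1)^{n+1}\bigl(\delta_n+\tfrac{p_0}{4z_n^0}\bigr)+\mathcal{O}(n^{-2}), \\
e^{\pm iz\alpha_1} &= \pm i\,(-1)^n+\mathcal{O}(n^{-1}).
\end{align*}
Substituting these together with \eqref{firstpq} into the reduced equation, the $\varkappa_{1,n}$ contributions pair symmetrically as $(e^{-iz\alpha_1}+e^{iz\alpha_1})\varkappa_{1,n}/z=2\cos(\alpha_1 z)\varkappa_{1,n}/z$, which is of order $\mathcal{O}(n^{-2})$ since $\cos(\alpha_1 z)$ is already $\mathcal{O}(n^{-1})$ by the rough asymptotic. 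The only surviving nontrivial correction comes from $e^{-iz\alpha_1}\cdot i\widehat{p}_{cn}/(2z)=(-1)^n\widehat{p}_{cn}/(2z_n^0)+\mathcal{O}(n^{-2})$, after which solving the resulting scalar equation for $\delta_n$ gives
\begin{equation*}
\delta_n=\frac{\widehat{p}_{cn}-p_0}{4z_n^0}+\mathcal{O}(n^{-2}).
\end{equation*}

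Raising to the fourth power with $z=z_n^0+\delta_n$, and using $\delta_n=\mathcal{O}(n^{-1})$ together with $\widehat{p}_{cn}=\mathcal{O}(1)$, I obtain $z^2=(z_n^0)^2+(\widehat{p}_{cn}-p_0)/2+\mathcal{O}(n^{-1})$ and hence $\mu_n=z^4=(z_n^0)^4+(z_n^0)^2(\widehat{p}_{cn}-p_0)+\mathcal{O}(n)$, which is exactly \eqref{pq+}. The main delicacy is the bookkeeping of the $\varkappa_{1,n}$ contributions: since $\varkappa_{1,n}$ is only bounded by \eqref{asymptkappa1} and does not decay, a separate treatment of $\gamma_{1,1}$ and $\gamma_{1,2}$ would threaten a remainder of size $\mathcal{O}(n^2)$ in $\mu_n$. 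However, the symmetric structure couples the $\varkappa_{1,n}$ factor to the small quantity $\cos(\alpha_1 z)$, while the $\widehat{p}_{cn}$ factor is coupled to the nonvanishing $e^{-iz\alpha_1}$; this algebraic cancellation is what permits the sharper $\mathcal{O}(n)$ error.
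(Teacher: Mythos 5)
Your proposal is correct and mirrors the paper's proof: both substitute the rough estimate $z=\pi/2+\pi n+\delta_n$, $\delta_n=\mathcal{O}(n^{-1})$, into the factorized form \eqref{xi1234}, exploit the symmetric pairing $e^{-i\alpha_1 z}\varkappa_{1,n}/z+e^{i\alpha_1 z}\varkappa_{1,n}/z=2\varkappa_{1,n}\cos(\alpha_1 z)/z=\mathcal{O}(n^{-2})$ (the paper groups this as $\frac{4\varkappa_{1,n}}{\pi(2n+1)}\cos\alpha_1 z$), isolate the $\widehat{p}_{cn}$ contribution via $e^{-i\alpha_1 z}$, and solve the resulting linear equation for $\delta_n$. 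Your explicit remark about why the $\varkappa_{1,n}$ cancellation is necessary to avoid an $\mathcal{O}(n^2)$ remainder in $\mu_n$ is a correct and helpful observation about the structure of the argument, though the paper leaves it implicit.
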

\begin{proof}
Let $\lambda=z^4=\mu_n$, $n\to +\infty$. Lemma~\ref{lhras} shows $z=\pi/2+\pi n+\delta_n$,
$\delta_n=\mathcal{O}(n^{-1})$. Substituting the asymptotics \eqref{firstpq} into
\eqref{xi1234}, we have
\begin{equation}\label{prsin}
\begin{aligned}
\det\phi(z) &= 4iz^6e^{-i\beta_1z}
\bigg(2\cos\alpha_1z+e^{-i\alpha_1z}\Big(\frac{i\widehat{p}_{cn}}{2z}+
\frac{\varkappa_{1, n}}{z} +\mathcal{O}(z^{-2})\Big)+
e^{i\alpha_1z}\Big(\frac{\varkappa_{1, n}}{z}+\mathcal{O}(z^{-2})\Big)\bigg) \\
&= 4iz^6e^{-i\beta_1z}\bigg(2\cos\alpha_1z+\frac{4\varkappa_{1, n}}{\pi(2n+1)}\cos\alpha_1z+
\frac{i\widehat{p}_{cn}}{\pi(2n+1)}e^{-i\alpha_1z}+\mathcal{O}(n^{-2})\bigg),
\end{aligned}
\end{equation}
where $\alpha_1$, $\beta_1$, and $\varkappa_{1, n}$ have the form \eqref{alpha+beta} and
\eqref{kappa1}, respectively. Substituting the asymptotics
\begin{flalign*}
\cos\alpha_1z=\cos\Big(z+\frac{p_0}{4z}\Big)
&= (-1)^{n+1}\sin\Big(\delta_n+\frac{p_0}{4(\pi/2+\pi n+\delta_n)}\Big) \\
&=(-1)^{n+1}\Big(\delta_n+\frac{p_0}{2\pi(2n+1)}+\mathcal{O}(n^{-2})\Big)
\end{flalign*}
and
\begin{align*}
e^{-i\alpha_1z} &= e^{-iz-ip_0/(4z)}=(-1)^{n+1}ie^{-i\delta_n-ip_0/(4z)}
=(-1)^{n+1}i\big(1+\mathcal{O}(n^{-1})\big), \\
e^{-i\beta_1z} &= e^{\pi/2+\pi n}\big(1+\mathcal{O}(n^{-1})\big),
\end{align*}
into \eqref{prsin} and using \eqref{asymptkappa1}, we get
\[
\det\phi(z)= 4iz^6e^{\pi/2+\pi n}(-1)^{n+1}
\Big(2\delta_n+\frac{p_0-\widehat{p}_{cn}}{\pi(2n+1)}+\mathcal{O}(n^{-2})\Big).
\]
The equation $\det\phi(z)=0$ implies
\[
\delta_n=\frac{\widehat{p}_{cn}-p_0}{2\pi(2n+1)}+\mathcal{O}(n^{-2}).
\]
Then identity $z=\pi/2+\pi n+\delta_n$ implies
\[
z=\frac{\pi}{2}+\pi n+\frac{\widehat{p}_{cn}-p_0}{2\pi(2n+1)}+\mathcal{O}(n^{-2}),
\]
which yields the asymptotics~\eqref{pq+}.
\end{proof}

\section{The case $p'$, $q\in L^1(\mathbb{T})$}\label{sec4}
Now we determine sharp eigenvalue asymptotics for the case $p'$, $q\in L^1(\mathbb{T})$.
Introduce the sequence
\begin{equation}\label{kappa2}
\varkappa_{2, n}= \frac{1}{8}\int_0^1e^{-\pi(2n+1)s}\big(p'(s)+p'(1-s)\big)\,ds,
\quad n\in\mathbb{N}.
\end{equation}
Note that
\begin{equation}\label{asymptkappa2}
\varkappa_{2, n}=\mathcal{O}(1) \quad \text{as} \quad n\to +\infty.
\end{equation}

In order to obtain asymptotics of the characteristic function $\det\phi$, we need to determine
the asymptotics of the functions $\gamma_{2, 1}$ and $\gamma_{2, 2}$ defined by \eqref{gamma}.
We have the following result.
\begin{lemma}
Let $p'$, $q\in L^1(\mathbb{T})$, $z\in\mathcal{Z}_+$, and let
$z=\pi/2+\pi n+\mathcal{O}(n^{-1})$, $n\to +\infty$. Then
\begin{equation}\label{firstp'q}
\gamma_{2, 1}(z)= -\frac{i\widehat{p}_{sn}'}{4z^2}
+\frac{\varkappa_{2, n}}{z^2}+\mathcal{O}(n^{-3}), \qquad \qquad
\gamma_{2, 2}(z)= \frac{\varkappa_{2, n}}{z^2}+\mathcal{O}(n^{-3}),
\end{equation}
where $\varkappa_{2, n}$ has the form \eqref{kappa2}.
\end{lemma}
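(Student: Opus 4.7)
The plan is to follow the template of the preceding lemma (which computed $\gamma_{1,1}, \gamma_{1,2}$), adapting it to $\sigma=2$. By \eqref{4.31}, $\zeta_{2, lj}(x, z) = \mathcal{B}_{2, lj}(x, z)/z^2 + \mathcal{W}_{2, lj}(x, z)/z^2$, so each $\zeta_{2, \cdot}$ is of order $z^{-2}$ and each product $\zeta_{2, \cdot}(x, z)\zeta_{2, \cdot}(y, z)$ appearing in \eqref{gamma} is of order $z^{-4}=\mathcal{O}(n^{-4})$, hence absorbed into the remainder $\mathcal{O}(n^{-3})$. The new feature compared to $\sigma=1$ is that the correction $\mathcal{W}_2=-pW_1$ from \eqref{mathcalW1} must be tracked explicitly in each surviving $\zeta_{2, lj}$.

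Next I would observe that in both $\gamma_{2, 1}$ and $\gamma_{2, 2}$, exactly four of the eight linear $\zeta_{2, \cdot}$ terms in \eqref{gamma} are diagonal entries $\zeta_{2, jj}$, which vanish identically: \eqref{4.13} gives $\mathcal{B}_{2, jj}(x, z)=0$, while the diagonal of $W_1$ in \eqref{6.2} is zero, so $\mathcal{W}_{2, jj}=-pW_{1, jj}=0$. For each of the four surviving off-diagonal $\zeta_{2, lj}$, I would split $\zeta_{2, lj}=\mathcal{B}_{2, lj}/z^2+\mathcal{W}_{2, lj}/z^2$. Using the $1$-periodicity $p(0)=p(1)$ and reading off the entries $W_{1, 14}, W_{1, 23}, W_{1, 32}, W_{1, 41}$ (all of the form $\pm 1/8$) from \eqref{6.2}, I expect the four $\mathcal{W}_2$-contributions to cancel in both $\gamma_{2, 1}$ and $\gamma_{2, 2}$.

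With the $\mathcal{W}_2$ part eliminated, each of $\gamma_{2, 1}, \gamma_{2, 2}$ reduces modulo $\mathcal{O}(n^{-3})$ to $z^{-2}$ times a linear combination of four values $\mathcal{B}_{2, lj}(0, z)$ or $\mathcal{B}_{2, lj}(1, z)$. Some of these vanish automatically since the integration interval in \eqref{4.14}--\eqref{4.14*} collapses: $\mathcal{B}_{2, lj}(0, z)=0$ for $l<j$ and $\mathcal{B}_{2, lj}(1, z)=0$ for $l>j$. I would then compute the remaining $\mathcal{B}_{2, lj}$ using $F_2=p'W_1$ from \eqref{phi2}. They split into two types: oscillatory integrals with phase $e^{\pm 2izs}$ and exponentially decaying integrals with weight $e^{-2zs}$ or $e^{-2z(1-s)}$. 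Substituting $z=\pi/2+\pi n+\mathcal{O}(n^{-1})$, the oscillatory pair in $\gamma_{2, 1}$ combines to $(\widehat{p'}_n-\overline{\widehat{p'}_n})/8=-i\widehat{p'}_{sn}/4$, producing the first summand of \eqref{firstp'q}, while in $\gamma_{2, 2}$ both oscillatory terms vanish (the corresponding $\mathcal{B}_2$'s are exactly the ones with collapsed integration intervals). The decaying pair, after the substitution $u=1-s$ in one integral, combines to $\varkappa_{2, n}$ in both formulas.

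The main obstacle will be the bookkeeping: verifying the $\mathcal{W}_2$-cancellation index by index, keeping signs straight when applying \eqref{4.14} versus \eqref{4.14*}, and controlling the error from the approximation $e^{\pm 2iz(\cdot)}=e^{\pm i\pi(2n+1)(\cdot)}(1+\mathcal{O}(n^{-1}))$. Since $p'\in L^1(\mathbb{T})$, this approximation error contributes $\mathcal{O}(n^{-1})$ inside each $\mathcal{B}_{2, lj}$-integral, which after division by $z^2$ yields precisely the claimed $\mathcal{O}(n^{-3})$ remainder.
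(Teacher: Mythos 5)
Your proposal is correct and follows essentially the same route as the paper: drop the quadratic $\zeta$-products as $\mathcal{O}(z^{-4})$, split each surviving $\zeta_{2,lj}$ into its $\mathcal{B}_{2,lj}/z^2$ and $\mathcal{W}_{2,lj}/z^2$ parts, show the $\mathcal{W}_2$-sum cancels via $p(0)=p(1)$ and the explicit entries of $W_1$, then evaluate the four remaining $\mathcal{B}_{2,lj}$ integrals with $F_2=p'W_1$ and identify the oscillatory combination with $-i\widehat{p}'_{sn}/4$ and the decaying pair with $\varkappa_{2,n}$. The only cosmetic difference is that you peel off the vanishing diagonal entries before checking the $\mathcal{W}_2$ cancellation, whereas the paper verifies the eight-term sum directly; the substance is identical.
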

\begin{proof}
Let $z=\pi/2+\pi n+\mathcal{O}(n^{-1})$, $n\to +\infty$. The definition \eqref{4.31}
implies
\begin{equation}\label{asymptzp'q}
\zeta_{2, kj}(x, z)\zeta_{2, ls}(y, z)=\mathcal{O}(z^{-4}), \quad k, j, l, s=1, 2, 3, 4,
\end{equation}
for all $x, y\in [0, 1]$, $z\in\mathcal{Z}_+$. This asymptotics, the first
definition in \eqref{gamma} and the formula~\eqref{4.31} yield
\begin{equation}\label{newgamma1'}
\begin{aligned}
\gamma_{2, 1}(z) &=\frac{1}{z^2}\Big(\mathcal{B}_{2, 23}(1, z)
+\mathcal{B}_{2, 33}(1, z)+\mathcal{B}_{2, 14}(1, z)+
\mathcal{B}_{2, 44}(1, z)+\mathcal{B}_{2, 11}(0, z)-\mathcal{B}_{2, 41}(0, z) \\
&\phantom{3}+\mathcal{B}_{2, 22}(0, z)-\mathcal{B}_{2, 32}(0, z)
+\mathcal{W}_{2, 23}(1, z)+\mathcal{W}_{2, 33}(1, z)
+\mathcal{W}_{2, 14}(1, z)+\mathcal{W}_{2, 44}(1, z)\\
&\phantom{3}+\mathcal{W}_{2, 11}(0, z)-\mathcal{W}_{2, 41}(0, z)
+\mathcal{W}_{2, 22}(0, z)-\mathcal{W}_{2, 32}(0, z)\Big)+\mathcal{O}(z^{-3}),
\end{aligned}
\end{equation}
where $\mathcal{W}_2$ has the form \eqref{mathcalW1}.
The identities \eqref{mathcalW1}, \eqref{6.2} and the periodicity of the function $p$ give
\begin{flalign*}
\mathcal{W}_{2, 23}(1, z) &+\mathcal{W}_{2, 33}(1, z)+\mathcal{W}_{2, 14}(1, z)
+\mathcal{W}_{2, 44}(1, z)\\
&\phantom{123}+\mathcal{W}_{2, 11}(0, z)-\mathcal{W}_{2, 41}(0, z)
+\mathcal{W}_{2, 22}(0, z)-\mathcal{W}_{2, 32}(0, z)\\
&= -p(0)\Big(W_{1, 23}+W_{1, 33}+W_{1, 14}+W_{1, 44}+W_{1, 11}-W_{1, 41}
+W_{1, 22}-W_{1, 32}\Big)=0.
\end{flalign*}
Substituting this and \eqref{4.13}~--~\eqref{4.14*} into \eqref{newgamma1'}
and using \eqref{phi2}, we get
\begin{flalign*}
\gamma_{2, 1}(z) &= \frac{1}{z^2}\bigg(\int_0^1e^{i\pi(2n+1)(1-s)}F_{2, 23}(s)\,ds
+\int_0^1e^{-\pi(2n+1)(1-s)}F_{2, 14}(s)\,ds \\
&\phantom{123}+ \int_0^1e^{-\pi(2n+1)s}F_{2, 41}(s)\,ds
+\int_0^1e^{i\pi(2n+1)s}F_{2, 32}(s)\,ds\bigg)+\mathcal{O}(n^{-3}) \\
&= \frac{1}{8z^2}\int_0^1p'(s)\Big(e^{-\pi(2n+1)s}+e^{-\pi(2n+1)(1-s)}\Big)\,ds \\
&\phantom{123}-\frac{1}{8z^2}\int_0^1p'(s)
\Big(e^{i\pi(2n+1)s}+e^{i\pi(2n+1)(1-s)}\Big)\,ds+\mathcal{O}(n^{-3}) \\
&=\frac{\widehat{p}_n'-\overline{\widehat{p}}_n'}{8z^2}
+\frac{\varkappa_{2, n}}{z^2}+\mathcal{O}(n^{-3}).
\end{flalign*}
The identity $\widehat{p}_n'-\overline{\widehat{p}}_n'=-2i\widehat{p}_{sn}'$ gives the first
asymptotics in \eqref{firstp'q}.

Similarly, the second definition in \eqref{gamma}, the asymptotics \eqref{asymptzp'q},
and the formula~\eqref{4.31} yield
\begin{equation}\label{newgamma2'}
\begin{aligned}
\gamma_{2, 2}(z) &= \frac{1}{z^2}\Big(\mathcal{B}_{2, 22}(1, z)
+\mathcal{B}_{2, 32}(1, z)+\mathcal{B}_{2, 14}(1, z)+
\mathcal{B}_{2, 44}(1, z)+\mathcal{B}_{2, 33}(0, z)-\mathcal{B}_{2, 23}(0, z) \\
&\phantom{123}+\mathcal{B}_{2, 11}(0, z)-\mathcal{B}_{2, 41}(0, z)
+\mathcal{W}_{2, 22}(1, z)+\mathcal{W}_{2, 32}(1, z)+
\mathcal{W}_{2, 14}(1, z)+\mathcal{W}_{2, 44}(1, z) \\
&\phantom{123}+\mathcal{W}_{2, 33}(0, z)-\mathcal{W}_{2, 23}(0, z)+\mathcal{W}_{2, 11}(0, z)-
\mathcal{W}_{2, 41}(0, z)\Big)+\mathcal{O}(z^{-3}).
\end{aligned}
\end{equation}
The identities \eqref{mathcalW1} and \eqref{6.2} and the periodicity of the function $p$ give
\begin{flalign*}
\mathcal{W}_{2, 22}(1, z) &+\mathcal{W}_{2, 32}(1, z)+\mathcal{W}_{2, 14}(1, z)
+\mathcal{W}_{2, 44}(1, z)\\
&\phantom{123}+\mathcal{W}_{2, 33}(0, z)-\mathcal{W}_{2, 23}(0, z)
+\mathcal{W}_{2, 11}(0, z)-\mathcal{W}_{2, 41}(0, z)\\
&= -p(0)\Big(W_{1, 22}+W_{1, 32}+W_{1, 14}+W_{1, 44}+W_{1, 33}-W_{1, 23}
+W_{1, 11}-W_{1, 41}\Big)=0.
\end{flalign*}
Substituting this and \eqref{4.13}~--~\eqref{4.14*} into \eqref{newgamma2'}
and using \eqref{phi2}, we get
\begin{flalign*}
\gamma_{2, 2}(z) &= \frac{1}{z^2}\int_0^1e^{-\pi(2n+1)(1-s)}F_{2, 14}(s)\,ds+\frac{1}{z^2}\int_0^1e^{-\pi(2n+1)s}F_{2, 41}(s)\,ds
+\mathcal{O}(n^{-3}) \\
&= \frac{1}{8z^2}\int_0^1p'(s)\Big(e^{-\pi(2n+1)s}+e^{-\pi(2n+1)(1-s)}\Big)\,ds+\mathcal{O}(n^{-3}).
\end{flalign*}
This yields the second asymptotics in \eqref{firstp'q}.
\end{proof}

Now we determine asymptotics of the eigenvalues $\mu_n$.
\begin{lemma}\label{thp'q}
Let $p'$, $q\in L^1(\mathbb{T})$. Then the eigenvalues $\mu_n$ as $n\to +\infty$
satisfy the asymptotics
\begin{equation}\label{p'q+}
\mu_n=\big(\frac{\pi}{2}+\pi n\big)^4-\big(\frac{\pi}{2}+\pi n\big)^2p_0-
\big(\frac{\pi}{2}+\pi n\big)\frac{\widehat{p}_{sn}'}{2}+\mathcal{O}(1).
\end{equation}
\end{lemma}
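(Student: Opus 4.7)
The plan is to follow the same three-step scheme used for Lemma~\ref{lh5.41}, but now working with the factorization~\eqref{xi1234} at level $\sigma=2$, which carries an $\mathcal{O}(z^{-3})$ tail and so should yield eigenvalue accuracy $\mathcal{O}(1)$ instead of $\mathcal{O}(n)$. Concretely, let $\lambda=z^4=\mu_n$ with $n\to+\infty$, and use Lemma~\ref{lhras} to write $z=\pi/2+\pi n+\delta_n$ with $\delta_n=\mathcal{O}(n^{-1})$. For $\sigma=2$ the relevant data are $\alpha_2=1+p_0/(4z^2)$ and $\beta_2=i-ip_0/(4z^2)$ from~\eqref{alpha+beta}, together with the asymptotics~\eqref{firstp'q} for $\gamma_{2,1}$ and $\gamma_{2,2}$.

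Next I would substitute \eqref{firstp'q} into \eqref{xi1234} with $\sigma=2$ and expand $\cos\alpha_2z$ and $e^{\pm i\alpha_2z}$. Using
\[
\cos\alpha_2z=(-1)^{n+1}\sin\!\Big(\delta_n+\frac{p_0}{2\pi(2n+1)}+\mathcal{O}(n^{-3})\Big)
\]
and $e^{\pm i\alpha_2z}=\pm i(-1)^n(1+\mathcal{O}(n^{-1}))$, the key observation is that the $\varkappa_{2,n}$ contributions from $\gamma_{2,1}$ and $\gamma_{2,2}$ combine into
\[
(e^{-i\alpha_2z}+e^{i\alpha_2z})\frac{\varkappa_{2,n}}{z^2}=2\cos(\alpha_2z)\frac{\varkappa_{2,n}}{z^2},
\]
which has size $\mathcal{O}(\delta_n/n^2)=\mathcal{O}(n^{-3})$ and is absorbed into the remainder. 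The asymmetric $-i\widehat{p}'_{sn}/(4z^2)$ piece of $\gamma_{2,1}$ is not cancelled and produces the new contribution $(-1)^{n+1}\widehat{p}'_{sn}/(4z^2)$ after multiplication by $e^{-i\alpha_2z}$.

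After dividing out the nonvanishing prefactor $4iz^6e^{-i\beta_2z}(-1)^{n+1}$, the equation $\det\phi(z)=0$ reduces to
\[
2\delta_n+\frac{p_0}{\pi(2n+1)}+\frac{\widehat{p}'_{sn}}{4z^2}=\mathcal{O}(n^{-3}),
\]
which solves to $\delta_n=-p_0/(4w)-\widehat{p}'_{sn}/(8w^2)+\mathcal{O}(n^{-3})$ with $w=\pi/2+\pi n$ (using $2\pi(2n+1)=4w$ and $1/z^2=1/w^2+\mathcal{O}(n^{-3})$ in the subleading terms). Finally, raising to the fourth power and keeping terms to order $\mathcal{O}(1)$ yields
\[
\mu_n=w^4+4w^3\delta_n+6w^2\delta_n^2+\mathcal{O}(1)=w^4-w^2p_0-w\,\widehat{p}'_{sn}/2+\mathcal{O}(1),
\]
which is exactly \eqref{p'q+}.

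The main technical obstacle is the error bookkeeping: I need to justify that the $\varkappa_{2,n}$ cancellation really holds at the order claimed, and that nothing else of size $\mathcal{O}(n^{-2})$ leaks in from the $\mathcal{O}(z^{-3})$ remainder in \eqref{xi1234}. Once that accounting is under control, the algebra is straightforward; the cancellation itself is the only nontrivial point, and it is what upgrades the error in \eqref{pq+} from $\mathcal{O}(n)$ to $\mathcal{O}(1)$ in \eqref{p'q+}.
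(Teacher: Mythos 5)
Your argument is correct and follows essentially the same route as the paper: substitute the $\sigma=2$ asymptotics \eqref{firstp'q} into \eqref{xi1234}, expand $\cos\alpha_2z$ and $e^{-i\alpha_2z}$, observe that the $\varkappa_{2,n}$ terms recombine into $2\varkappa_{2,n}z^{-2}\cos\alpha_2z=\mathcal{O}(n^{-3})$, and solve $\det\phi(z)=0$ for the perturbation of $z$. The only cosmetic difference is that you bootstrap from Lemma~\ref{lhras} ($\delta_n=\mathcal{O}(n^{-1})$) and extract the $p_0$ and $\widehat{p}'_{sn}$ corrections in one step, whereas the paper starts from the already-refined $z=\tfrac{\pi}{2}+\pi n-\tfrac{p_0}{2\pi(2n+1)}+\delta_n$ of Lemma~\ref{lh5.41} so that the $p_0$ terms cancel inside the $\sin$; the two bookkeeping schemes produce the identical expansion of $z$ and hence of $\mu_n$.
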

\begin{proof}
Let $\lambda=z^4=\mu_n$, $n\to +\infty$. Lemma~\ref{lh5.41} shows
\begin{equation}\label{promz'}
z=\frac{\pi}{2}+\pi n-\frac{p_0}{2\pi(2n+1)}+\delta_n, \quad \delta_n=\mathcal{O}(n^{-2}).
\end{equation}
Substituting the asymptotics \eqref{firstp'q} into \eqref{xi1234}, we obtain
\begin{equation}\label{prsin'}
\begin{aligned}
\det\phi(z) &= 4iz^6e^{-i\beta_2z}\bigg(2\cos\alpha_2z+e^{-i\alpha_2z}
\Big(-\frac{i\widehat{p}_{sn}'}{4z^2}+\frac{\varkappa_{2, n}}{z^2} +\mathcal{O}(z^{-3})\Big)+
e^{i\alpha_2z}\Big(\frac{\varkappa_{2, n}}{z^2}+\mathcal{O}(z^{-3})\Big)\bigg) \\
&= 4iz^6e^{-i\beta_2z}
\bigg(2\cos\alpha_2z+\frac{8\varkappa_{2, n}}{\pi^2(2n+1)^2}\cos\alpha_2z-
\frac{i\widehat{p}_{sn}'}{\pi^2(2n+1)^2}e^{-i\alpha_2z}+\mathcal{O}(n^{-3})\bigg),
\end{aligned}
\end{equation}
where $\alpha_2$, $\beta_2$, and $\varkappa_{2, n}$ have the form \eqref{alpha+beta} and
\eqref{kappa2}, respectively. Substituting the asymptotics
\[
\cos\alpha_2z=\cos\Big(z+\frac{p_0}{4z}\Big)=(-1)^{n+1}\sin\Big(\delta_n-\frac{p_0}{2\pi(2n+1)}+\frac{p_0}{4z}\Big)=
(-1)^{n+1}\Big(\delta_n+\mathcal{O}(n^{-3})\Big)
\]
and
\begin{align*}
e^{-i\alpha_2z} &= e^{-iz-ip_0/(4z)}=(-1)^{n+1}ie^{-i\delta_n-ip_0/(4z)}
=(-1)^{n+1}i\big(1+\mathcal{O}(n^{-1})\big), \\
e^{-i\beta_2z} &=e^{\pi/2+\pi n}\big(1+\mathcal{O}(n^{-1})\big),
\end{align*}
into \eqref{prsin'} and using \eqref{asymptkappa2}, we have
\[
\det\phi(z) =4iz^6e^{\pi/2+\pi n}(-1)^{n+1}
\Big(2\delta_n+\frac{\widehat{p}_{sn}'}{\pi^2(2n+1)^2}+\mathcal{O}(n^{-3})\Big).
\]
The equation $\det\phi(z)=0$ yields
\[
\delta_n=-\frac{\widehat{p}_{sn}'}{2\pi^2(2n+1)^2}+\mathcal{O}(n^{-3}).
\]
Then identity \eqref{promz'} gives
\[
z=\frac{\pi}{2}+\pi n-\frac{p_0}{2\pi(2n+1)}-\frac{\widehat{p}_{sn}'}{2\pi^2(2n+1)^2}+\mathcal{O}(n^{-3}),
\]
which yields the asymptotics~\eqref{p'q+}.
\end{proof}

\section{The case $p''$, $q\in L^1(\mathbb{T})$}\label{sec5}
Now we determine sharp eigenvalue asymptotics for the case $p''$, $q\in L^1(\mathbb{T})$.
Introduce the sequence
\begin{equation}\label{kappa3}
\varkappa_{3, n}= \frac{1}{16}\int_0^1e^{-\pi(2n+1)s}\Big(p''(s)-p''(1-s)-4q(s)+4q(1-s)\Big)\,ds.
\end{equation}
Note that
\begin{equation}\label{asymptkappa3}
\varkappa_{3, n}=\mathcal{O}(1) \quad \text{as} \quad n\to +\infty.
\end{equation}

In order to obtain asymptotics of the characteristic function $\det\phi$, we need to determine
the asymptotics of the functions $\gamma_{3, 1}$ and $\gamma_{3, 2}$ defined by \eqref{gamma}.
We have the following result.
\begin{lemma}\label{lh15}
Let $p''$, $q\in L^1(\mathbb{T})$, $z\in\mathcal{Z}_+$, and let
$z=\pi/2+\pi n+\mathcal{O}(n^{-1})$, $n\to+\infty$. Then
\begin{flalign}\label{firstp''q}
\gamma_{3, 1}(z) &= \frac{\psi_1}{z^3}-\frac{i\widehat{p}_{cn}''}{8z^3}+
\frac{i\widehat{q}_{cn}}{2z^3}+\frac{\varkappa_{3, n}}{z^3}+\mathcal{O}(n^{-4}),\qquad
\gamma_{3, 2}(z)= \frac{\psi_2}{z^3}+\frac{\varkappa_{3, n}}{z^3}+\mathcal{O}(n^{-4}),
\end{flalign}
where $\varkappa_{3, n}$ has the form \eqref{kappa3} and
\begin{equation}\label{psi12}
\psi_1=\frac{(1-i)p'(0)}{8}, \qquad \qquad \psi_2=\frac{(1+i)p'(0)}{8}
\end{equation}
\end{lemma}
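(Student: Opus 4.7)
The plan is to mirror the proofs of the analogous lemmas for $\gamma_{1,1}, \gamma_{1,2}$ and $\gamma_{2,1}, \gamma_{2,2}$ in Sections~\ref{sec3} and~\ref{sec4}. The starting point is the observation that, for $z=\pi/2+\pi n+\mathcal{O}(n^{-1})$ and $\sigma=3$, the definition \eqref{4.31} together with $\mathcal{W}_3=-pW_1-p'W_2/z$ gives $\zeta_{3,kj}(x,z)=\mathcal{O}(z^{-2})$, so that all bilinear products appearing in \eqref{gamma} are of order $\mathcal{O}(z^{-4})=\mathcal{O}(n^{-4})$ and can be absorbed into the remainder. Hence only the linear terms in $\zeta_{3,lj}$ evaluated at $x=0$ and $x=1$ contribute to the principal part of $\gamma_{3,1}$ and $\gamma_{3,2}$.

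The next step is to split each $\zeta_{3,lj}=\mathcal{B}_{3,lj}/z^3+\mathcal{W}_{3,lj}/z^2$ and handle the two pieces separately. For the $\mathcal{W}_3$ part I will use the periodicity $p(1)=p(0)$ and $p'(1)=p'(0)$ to reduce every evaluation at $x=0$ or $x=1$ to $-p(0)W_1-p'(0)W_2/z$; the resulting algebraic sums of entries of $W_1$ and $W_2$ corresponding to the index pattern in \eqref{gamma} will be computed from \eqref{6.2} and \eqref{6.2'}. As in Section~\ref{sec4}, the $W_1$-sum vanishes (this is exactly the cancellation that was needed for $\gamma_{2,\bullet}$); the new feature here is that the $W_2$-sum is non-zero and, after multiplication by $-p'(0)/z^3$, produces the constants $\psi_1=(1-i)p'(0)/8$ and $\psi_2=(1+i)p'(0)/8$ in \eqref{firstp''q}.

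For the $\mathcal{B}_3/z^3$ part I will substitute \eqref{4.13}--\eqref{4.14*} together with the explicit form of $F_3$ given by \eqref{8.5''}, i.e.\ $F_3=p''W_2-\tfrac{q}{4}(Q-i\mathcal{T})+\tfrac{p^2}{32}(\mathcal{Q}-i\mathcal{T})$. Since $\mathcal{T}$ is diagonal, only the $W_2$, $Q$ and $\mathcal{Q}$ entries matter, and $\mathcal{Q}_{14}=\mathcal{Q}_{23}=\mathcal{Q}_{32}=\mathcal{Q}_{41}=0$, so the $p^2$-term drops out of all the relevant positions $(2,3)$, $(1,4)$, $(4,1)$, $(3,2)$. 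Replacing $e^{\pm 2izs}$, $e^{\pm 2iz(1-s)}$ by $e^{\pm i\pi(2n+1)s}(1+\mathcal{O}(n^{-1}))$ and $e^{\pm i\pi(2n+1)(1-s)}(1+\mathcal{O}(n^{-1}))$, and combining the two decaying integrals via the substitution $s\mapsto 1-s$, I recover $\varkappa_{3,n}/z^3$ as defined in \eqref{kappa3}, while the two oscillatory integrals (present only in $\gamma_{3,1}$) sum to $-i\widehat{p}''_{cn}/(8z^3)+i\widehat{q}_{cn}/(2z^3)$ via the identities $\hat{p}''_n+\overline{\hat{p}''_n}=2\widehat{p}''_{cn}$ and $\hat{q}_n+\overline{\hat{q}_n}=2\widehat{q}_{cn}$.

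The main obstacle is purely bookkeeping: one must keep track of eight scalar entries of $W_1$ and eight of $W_2$ in the right combinations, and then of eight line integrals of $F_3$ with the correct kernels. Nothing new occurs at the analytic level beyond what is already present in the proofs of Section~\ref{sec3} and~\ref{sec4}; the only genuinely new algebraic check is that the $W_2$-sum does not vanish and produces exactly the constants $\psi_1$ and $\psi_2$ appearing in \eqref{psi12}. Once these matrix sums are verified, the asymptotics \eqref{firstp''q} follow by direct substitution, and the bound \eqref{asymptkappa3} is a consequence of $p''\in L^1(\mathbb{T})$, $q\in L^1(\mathbb{T})$ combined with $e^{-\pi(2n+1)s}\leqslant 1$.
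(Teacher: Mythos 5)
Your proposal matches the paper's own proof essentially step for step: drop bilinear products via $\zeta_{3,kj}=\mathcal{O}(z^{-2})$, split $\zeta_{3,lj}=\mathcal{B}_{3,lj}/z^3+\mathcal{W}_{3,lj}/z^2$, verify that the $W_1$-sums in the pattern of \eqref{gamma} vanish while the $W_2$-sums give $(i-1)/8$ and $(-i-1)/8$ (hence $\psi_1$, $\psi_2$ after multiplying by $-p'(0)/z$), and then compute the four kernel integrals with $F_3$, using that $\mathcal{Q}_{23}=\mathcal{Q}_{14}=\mathcal{Q}_{41}=\mathcal{Q}_{32}=0$ so the $p^2$ term contributes nothing off-diagonal in those slots. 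This is the same decomposition and the same bookkeeping as the paper, and your identification of which entries survive (diagonal $\mathcal{B}$'s vanish by \eqref{4.13}, the $x=1$ versus $x=0$ kernels select exactly $(2,3),(1,4),(4,1),(3,2)$ for $\gamma_{3,1}$ and only $(1,4),(4,1)$ for $\gamma_{3,2}$) is correct.
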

\begin{proof}
Let $z=\pi/2+\pi n+\mathcal{O}(n^{-1})$, $n\to+\infty$. The definition \eqref{4.31} implies
\begin{equation}\label{asymptzp''q}
\zeta_{3, kj}(x, z)\zeta_{3, ls}(y, z)=\mathcal{O}(z^{-4}), \quad k, j, l, s=1, 2, 3, 4,
\end{equation}
for all $x, y\in [0, 1]$, $z\in\mathcal{Z}_+$. This asymptotics, the first
definition in \eqref{gamma} and the formula~\eqref{4.31} yield
\begin{equation}\label{newgamma1''}
\begin{aligned}
\gamma_{3, 1}(z) &=\frac{1}{z^3}\Big(\mathcal{B}_{3, 23}(1, z)
+\mathcal{B}_{3, 33}(1, z)+\mathcal{B}_{3, 14}(1, z)+
\mathcal{B}_{3, 44}(1, z)+\mathcal{B}_{3, 11}(0, z)-\mathcal{B}_{3, 41}(0, z) \\
&\phantom{1}+\mathcal{B}_{3, 22}(0, z)-\mathcal{B}_{3, 32}(0, z)\Big)+
\frac{1}{z^2}\Big(\mathcal{W}_{3, 23}(1, z)+\mathcal{W}_{3, 33}(1, z)
+\mathcal{W}_{3, 14}(1, z)\\
&\phantom{1}+\mathcal{W}_{3, 44}(1, z)+\mathcal{W}_{3, 11}(0, z)-\mathcal{W}_{3, 41}(0, z)
+\mathcal{W}_{3, 22}(0, z)-\mathcal{W}_{3, 32}(0, z)\Big)+\mathcal{O}(z^{-4}),
\end{aligned}
\end{equation}
where $\mathcal{W}_3$ has the form \eqref{mathcalW2}. The identities \eqref{mathcalW2},
\eqref{6.2}, \eqref{6.2'}, and the periodicity of the function $p$ give
\begin{flalign*}
\mathcal{W}_{3, 23}(1, z) &+\mathcal{W}_{3, 33}(1, z)+\mathcal{W}_{3, 14}(1, z)
+\mathcal{W}_{3, 44}(1, z)\\
&\phantom{123}+\mathcal{W}_{3, 11}(0, z)-\mathcal{W}_{3, 41}(0, z)
+\mathcal{W}_{3, 22}(0, z)-\mathcal{W}_{3, 32}(0, z)\\
&= -p(0)\Big(W_{1, 23}+W_{1, 33}+W_{1, 14}+W_{1, 44}+W_{1, 11}-W_{1, 41}
+W_{1, 22}-W_{1, 32}\Big) \\
&\phantom{123}- \frac{p'(0)}{z}\Big(W_{2, 23}+W_{2, 33}+W_{2, 14}+W_{2, 44}
+W_{2, 11}-W_{2, 41}+W_{2, 22}-W_{2, 32}\Big)=\frac{\psi_1}{z},
\end{flalign*}
where $\psi_1$ is given by \eqref{psi12}.
Substituting this and \eqref{4.13}~--~\eqref{4.14*} into
\eqref{newgamma1''} and using~\eqref{8.5''}, we get
\begin{flalign*}
\gamma_{3, 1}(z) &= \frac{\psi_1}{z^3}+\frac{1}{z^3}
\bigg(\int_0^1e^{i\pi(2n+1)(1-s)}F_{3, 23}(s)\,ds+
\int_0^1e^{-\pi(2n+1)(1-s)}F_{3, 14}(s)\,ds \\
&\phantom{123}+ \int_0^1e^{-\pi(2n+1)s}F_{3, 41}(s)\,ds+
\int_0^1e^{i\pi(2n+1)s}F_{3, 32}(s)\,ds\bigg)+\mathcal{O}(n^{-4}) \\
&= \frac{\psi_1}{z^3}+\frac{1}{16z^3}\int_0^1\Big(p''(s)-4q(s)\Big)
\Big(e^{-\pi(2n+1)s}-e^{-\pi(2n+1)(1-s)}\Big)\,ds \\
&\phantom{123}+\frac{i}{16z^3}\int_0^1\Big(p''(s)-4q(s)\Big)
\Big(e^{i\pi(2n+1)(1-s)}-e^{i\pi(2n+1)s}\Big)\,ds+\mathcal{O}(n^{-4}) \\
&= \frac{\psi_1}{z^3}-\frac{i(\widehat{p}_n''+\overline{\widehat{p}}_n'')}{16z^3}+
\frac{i(\widehat{q}_n+\overline{\widehat{q}}_n)}{4z^3}+
\frac{\varkappa_{3, n}}{z^3}+\mathcal{O}(n^{-4}).
\end{flalign*}
The identities $\widehat{p}_n''+\overline{\widehat{p}}_n''=2\widehat{p}_{cn}''$ and
$\widehat{q}_n+\overline{\widehat{q}}_n=2\widehat{q}_{cn}$ give the first asymptotics in
\eqref{firstp''q}.

Similarly, the second definition in \eqref{gamma}, the asymptotics \eqref{asymptzp''q},
and the formula~\eqref{4.31} yield
\begin{equation}\label{newgamma2''}
\begin{aligned}
\gamma_{3, 2}(z) &= \frac{1}{z^3}\Big(\mathcal{B}_{3, 22}(1, z)
+\mathcal{B}_{3, 32}(1, z)+\mathcal{B}_{3, 14}(1, z)+
\mathcal{B}_{3, 44}(1, z)+\mathcal{B}_{3, 33}(0, z)-\mathcal{B}_{3, 23}(0, z) \\
&\phantom{1}+\mathcal{B}_{3, 11}(0, z)-\mathcal{B}_{3, 41}(0, z)\Big)+
\frac{1}{z^2}\Big(\mathcal{W}_{3, 22}(1, z)+\mathcal{W}_{3, 32}(1, z)
+\mathcal{W}_{3, 14}(1, z) \\
&\phantom{1}+\mathcal{W}_{3, 44}(1, z)+\mathcal{W}_{3, 33}(0, z)-\mathcal{W}_{3, 23}(0, z)
+\mathcal{W}_{3, 11}(0, z)-\mathcal{W}_{3, 41}(0, z)\Big)+\mathcal{O}(z^{-4}).
\end{aligned}
\end{equation}
The identities \eqref{mathcalW2}, \eqref{6.2}, \eqref{6.2'}, and the periodicity
of the function $p$ give
\begin{flalign*}
\mathcal{W}_{3, 22}(1, z) &+\mathcal{W}_{3, 32}(1, z)+\mathcal{W}_{3, 14}(1, z)
+\mathcal{W}_{3, 44}(1, z)\\
&\phantom{123}+\mathcal{W}_{3, 33}(0, z)-\mathcal{W}_{3, 23}(0, z)
+\mathcal{W}_{3, 11}(0, z)-\mathcal{W}_{3, 41}(0, z)\\
&= -p(0)\Big(W_{1, 22}+W_{1, 32}+W_{1, 14}+W_{1, 44}+W_{1, 33}-W_{1, 23}
+W_{1, 11}-W_{1, 41}\Big) \\
&\phantom{123}-\frac{p'(0)}{z}\Big(W_{2, 22}+W_{2, 32}+W_{2, 14}+W_{2, 44}
+W_{2, 33}-W_{2, 23}+W_{2, 11}-W_{2, 41}\Big)=\frac{\psi_2}{z},
\end{flalign*}
where $\psi_2$ is given by \eqref{psi12}. Substituting this and
\eqref{4.13}~--~\eqref{4.14*} into \eqref{newgamma2''} and using~\eqref{8.5''}, we get
\begin{flalign*}
\gamma_{3, 2}(z) &= \frac{\psi_2}{z^3}+
\frac{1}{z^3}\int_0^1e^{-\pi(2n+1)(1-s)}F_{3, 14}(s)\,ds+
\frac{1}{z^3}\int_0^1e^{-\pi(2n+1)s}F_{3, 41}(s)\,ds+\mathcal{O}(n^{-4}) \\
&= \frac{\psi_2}{z^3}+\frac{1}{16z^3}\int_0^1\Big(p''(s)-4q(s)\Big)\Big(e^{-\pi(2n+1)s}-
e^{-\pi(2n+1)(1-s)}\Big)\,ds+\mathcal{O}(n^{-4}).
\end{flalign*}
This yields the second asymptotics in \eqref{firstp''q}.
\end{proof}

Now we determine asymptotics of the eigenvalues $\mu_n$.
\begin{lemma}\label{thp''q}
Let $p''$, $q\in L^1(\mathbb{T})$. Then the eigenvalues $\mu_n$ as $n\to +\infty$
satisfy the asymptotics
\begin{equation}\label{p''q}
\mu_n=\big(\frac{\pi}{2}+\pi n\big)^4-\big(\frac{\pi}{2}+\pi n\big)^2p_0
+\frac{p_0^2-\|p\|^2}{8}+q_0-\frac{p'(0)}{2}
-\frac{\widehat{p}_{cn}''}{4}+\widehat{q}_{cn}+\mathcal{O}(n^{-1}).
\end{equation}
\end{lemma}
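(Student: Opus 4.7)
The plan is to follow the pattern of the proofs of Lemmas~\ref{lh5.41} and~\ref{thp'q}. Write $\lambda=z^4=\mu_n$ with $n\to+\infty$ and $z=z_0+\delta_n'$, where $z_0:=\pi/2+\pi n$. Since $p''\in L^1(\mathbb{T})$ entails $p'\in L^1(\mathbb{T})$, Lemma~\ref{thp'q} applies and supplies $\delta_n'=-p_0/(4z_0)+\mathcal{O}(n^{-3})$: the intermediate $\widehat{p}'_{sn}/(8z_0^2)$ term of \eqref{p'q+} is already $\mathcal{O}(n^{-3})$ under the present hypothesis, because the integration-by-parts identity $\widehat{p}'_{sn}=(\widehat{p}''_{cn}+2p'(0))/(\pi(2n+1))$ (using periodicity of $p'$, guaranteed by $p''\in L^1(\mathbb{T})$) gives $\widehat{p}'_{sn}=\mathcal{O}(n^{-1})$. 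The goal is to determine $\delta_n'$ modulo $\mathcal{O}(n^{-4})$.

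Substitute the explicit expressions \eqref{alpha+beta'''} for $\alpha_3,\beta_3$ and the asymptotics \eqref{firstp''q} for $\gamma_{3,1},\gamma_{3,2}$ into the factorization \eqref{xi1234} of $\det\phi$ with $\sigma=3$. The identity $\cos(z+\eta)=(-1)^{n+1}\sin(\delta_n'+\eta)$ (valid because $z_0=\pi(n+1/2)$) and a linear Taylor expansion of the sine reduce $\cos\alpha_3 z$ to $(-1)^{n+1}(\delta_n'+p_0/(4z)+(\|p\|^2/32-q_0/4)/z^3)$ modulo higher order. The leading estimates $e^{\pm i\alpha_3 z}=(-1)^{n+1}(\pm i)(1+\mathcal{O}(n^{-2}))$ convert the $\gamma$-contribution into $(-1)^{n+1}i(\gamma_{3,1}-\gamma_{3,2})$ plus an $\mathcal{O}(n^{-5})$ error; the $\varkappa_{3,n}$-pieces cancel algebraically in this difference, and using $\psi_1-\psi_2=-ip'(0)/4$ one obtains the scalar equation
\[
\delta_n'+\frac{p_0}{4z}+\frac{\|p\|^2/32-q_0/4}{z^3}+\frac{1}{2z^3}\Big(\frac{p'(0)}{4}+\frac{\widehat{p}''_{cn}}{8}-\frac{\widehat{q}_{cn}}{2}\Big)=\mathcal{O}(n^{-4}).
\]

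Iterating (expanding $1/z=1/z_0(1-\delta_n'/z_0+\cdots)$ and substituting the leading $\delta_n'=-p_0/(4z_0)$) yields $\delta_n'=-p_0/(4z_0)+B/z_0^3+\mathcal{O}(n^{-4})$ with $B=-p_0^2/16-\|p\|^2/32+q_0/4-p'(0)/8-\widehat{p}''_{cn}/16+\widehat{q}_{cn}/4$. Finally $\mu_n=z^4=z_0^4+4z_0^3\delta_n'+6z_0^2(\delta_n')^2+\mathcal{O}(n^{-2})$ produces $-p_0z_0^2$ from the leading linear term, $4B$ from the $1/z_0^3$ correction, and $3p_0^2/8$ from the quadratic term; the combination $-p_0^2/4+3p_0^2/8=p_0^2/8$ together with the remaining pieces in $4B$ yields exactly \eqref{p''q}.

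The main obstacle is the careful bookkeeping of the $\mathcal{O}(1)$ contributions to $\mu_n$, which come from three distinct sources in the expansion of $z^4$: the direct $1/z_0^3$ coefficient in $\delta_n'$ multiplied by $4z_0^3$, the Taylor residue of $p_0/(4z)$ around $z_0$ (contributing $-p_0^2/4$ through the $4z_0^3\cdot(-p_0 \delta_n'/(4z_0^2))$ channel), and the quadratic $6z_0^2(\delta_n')^2=3p_0^2/8$. These must combine precisely into the stated $(p_0^2-\|p\|^2)/8$. Verifying the exact cancellation of the $\varkappa_{3,n}$ terms (rather than merely bounding them as subleading) is essential, since otherwise they would obstruct closing the computation at the required precision; everything else is a routine calculation using the apparatus of Section~\ref{sec2}.
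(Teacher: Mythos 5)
Your proposal reproduces the paper's own argument: start from the refined localization of $z$ inherited from Lemma~\ref{thp'q} (absorbing the $\widehat{p}'_{sn}$-term into the remainder via integration by parts and periodicity of $p'$), substitute \eqref{firstp''q} into \eqref{xi1234} with $\sigma=3$, expand $\cos\alpha_3 z$ near the zero, and solve $\det\phi=0$ iteratively for the $1/z_0^3$ correction before passing to $\mu_n=z^4$. The organization (isolating $(-1)^{n+1}i(\gamma_{3,1}-\gamma_{3,2})$ and using $\psi_1-\psi_2=-ip'(0)/4$) is a cosmetic variant of the paper's, which instead regroups $e^{-i\alpha_3 z}\psi_1+e^{i\alpha_3 z}\psi_2$ into the $\cos\alpha_3 z-\sin\alpha_3 z$ combination; both are correct and lead to the same scalar equation and the same final bookkeeping of the $\mathcal{O}(1)$ contributions $-p_0^2/4+3p_0^2/8=p_0^2/8$.

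One small slip: you write $e^{\pm i\alpha_3 z}=(-1)^{n+1}(\pm i)(1+\mathcal{O}(n^{-2}))$, but in fact $e^{-i\alpha_3 z}=(-1)^{n+1}\,i\,(1+\mathcal{O}(n^{-3}))$ and $e^{+i\alpha_3 z}=-(-1)^{n+1}\,i\,(1+\mathcal{O}(n^{-3}))$, i.e.\ the signs are $\mp i$, not $\pm i$. The downstream formula $(-1)^{n+1}i(\gamma_{3,1}-\gamma_{3,2})$ that you actually use is the correct one, so the error does not propagate, but the stated identity is internally inconsistent with your own conclusion and should be fixed. Also, the $\varkappa_{3,n}$-terms do not literally cancel algebraically in the exact combination $e^{-i\alpha_3 z}\gamma_{3,1}+e^{i\alpha_3 z}\gamma_{3,2}$; they produce the factor $2\cos\alpha_3 z\cdot\varkappa_{3,n}/z^3$, which is negligible because $\cos\alpha_3 z=\mathcal{O}(n^{-3})$ near the eigenvalue, matching the paper's treatment. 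This is what actually justifies dropping them, and your heuristic of cancellation in the leading difference is consistent with it.
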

\begin{proof}
Let $\lambda=z^4=\mu_n$, $n\to +\infty$. Lemma~\ref{thp'q} shows
\begin{equation}\label{promz''}
z=\frac{\pi}{2}+\pi n-\frac{p_0}{2\pi(2n+1)}+\delta_n, \qquad \qquad \delta_n=\mathcal{O}(n^{-3}).
\end{equation}
Substituting the asymptotics \eqref{firstp''q} into \eqref{xi1234} and
using \eqref{psi12}, we get
\begin{equation}\label{prsin''}
\begin{aligned}
\det\phi(z) &= 4iz^6e^{-i\beta_3z}\bigg(2\cos\alpha_3z+e^{-i\alpha_3z}\Big(\frac{\psi_1}{z^3}
-\frac{i\widehat{p}_{cn}''}{8z^3}+\frac{i\widehat{q}_{cn}}{2z^3}+\frac{\varkappa_{3, n}}{z^3}
+\mathcal{O}(n^{-4})\Big) \\
&\phantom{123}+ e^{i\alpha_3z}\Big(\frac{\psi_2}{z^3}
+\frac{\varkappa_{3, n}}{z^3}+\mathcal{O}(n^{-4})\Big)\bigg) \\
&= 4iz^6e^{-i\beta_3z}\bigg(2\cos\alpha_3z\Big(1+\frac{8\varkappa_{3, n}}{\pi^3(2n+1)^3}\Big)+
\frac{2p'(0)}{\pi^3(2n+1)^3}\Big(\cos\alpha_3z-\sin\alpha_3z\Big) \\
&\phantom{123}+\frac{i(-\widehat{p}_{cn}''+4\widehat{q}_{cn})}{\pi^3(2n+1)^3}
e^{-i\alpha_3z}+\mathcal{O}(n^{-4})\bigg),
\end{aligned}
\end{equation}
where $\alpha_3$, $\beta_3$, and $\varkappa_{3, n}$ have the form \eqref{alpha+beta'''} and
\eqref{kappa3}, respectively. Using \eqref{alpha+beta'''} and \eqref{promz''}, we get
\[
\alpha_3z=z+\frac{p_0}{4z}+\frac{\|p\|^2}{32z^3}-\frac{q_0}{4z^3}=\frac{\pi}{2}+\pi n+\delta_n
+\frac{\|p\|^2-8q_0+2p_0^2}{4\pi^3(2n+1)^3}+\mathcal{O}(n^{-4}).
\]
Then
\begin{flalign*}
\cos\alpha_3z &=
(-1)^{n+1}\Big(\delta_n+\frac{\|p\|^2-8q_0+2p_0^2}{4\pi^3(2n+1)^3}+\mathcal{O}(n^{-4})\Big),
\qquad e^{-i\beta_3z}=e^{\pi/2+\pi n}\big(1+\mathcal{O}(n^{-1})\big), \\
\sin\alpha_3z &= (-1)^n\big(1+\mathcal{O}(n^{-6})\big), \qquad
e^{-i\alpha_3z}=(-1)^{n+1}i\big(1+\mathcal{O}(n^{-1})\big).
\end{flalign*}
Substituting these asymptotics into \eqref{prsin''} and using \eqref{asymptkappa3}, we obtain
\[
\det\phi(z)= 4iz^6e^{\pi/2+\pi n}(-1)^{n+1}\Big(2\delta_n+
\frac{\|p\|^2-8q_0+2p_0^2+4p'(0)+
2(\widehat{p}_{cn}''-4\widehat{q}_{cn})}{2\pi^3(2n+1)^3}+\mathcal{O}(n^{-4})\Big).
\]
The equation $\det\phi(z)=0$ yields
\[
\delta_n=\frac{-\|p\|^2+8q_0-2p_0^2-4p'(0)-2(\widehat{p}_{cn}''-4\widehat{q}_{cn})}{4\pi^3(2n+1)^3}+\mathcal{O}(n^{-4}).
\]
Then identity \eqref{promz''} gives
\[
z=\frac{\pi}{2}+\pi n-\frac{p_0}{2\pi(2n+1)}+
\frac{-\|p\|^2+8q_0-2p_0^2-4p'(0)-2(\widehat{p}_{cn}''-4\widehat{q}_{cn})}{4\pi^3(2n+1)^3}+\mathcal{O}(n^{-4}),
\]
which yields the asymptotics~\eqref{p''q}.
\end{proof}

\section{The case $p'''$, $q'\in L^1(\mathbb{T})$}\label{sec6}
Now we determine sharp eigenvalue asymptotics for the case $p'''$, $q'\in L^1(\mathbb{T})$.
Introduce the sequences
\begin{equation}\label{kappa4}
\varkappa_{4, n}=
\frac{1}{32}\int_0^1e^{-\pi(2n+1)s}\Big(p'''(s)+p'''(1-s)-4q'(s)-4q'(1-s)\Big)\,ds.
\end{equation}
Note that
\begin{equation}\label{asymptkappa4}
\varkappa_{4, n}=\mathcal{O}(1) \quad \text{as} \quad n\to +\infty.
\end{equation}

In order to obtain asymptotics of the characteristic function $\det\phi$, we need to determine
the asymptotics of the functions $\gamma_{4, 1}$ and $\gamma_{4, 2}$ defined by \eqref{gamma}.
We have the following result.
\begin{lemma}\label{lhgamma'''}
Let $p'''$, $q'\in L^1(\mathbb{T})$, $z\in\mathcal{Z}_+$, and let
$z=\pi/2+\pi n+\mathcal{O}(n^{-1})$, $n\to +\infty$. Then
\begin{equation}\label{firstp'''q'}
\begin{aligned}
\gamma_{4, 1}(z) &= \frac{\psi_1}{z^3}+\frac{3p^2(0)}{32z^4}+
\frac{i\widehat{p}_{sn}'''}{16z^4}-\frac{i\widehat{q}_{sn}'}{4z^4}
+\frac{\varkappa_{4, n}}{z^4}+\mathcal{O}(n^{-5}),\\
\gamma_{4, 2}(z) &= \frac{\psi_2}{z^3}+\frac{3p^2(0)}{32z^4} +
\frac{\varkappa_{4, n}}{z^4}+\mathcal{O}(n^{-5}),
\end{aligned}
\end{equation}
where $\varkappa_{4, n}$ and $\psi_1$, $\psi_2$ have the form \eqref{kappa4}
and \eqref{psi12}, respectively.
\end{lemma}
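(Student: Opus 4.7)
The plan is to follow the template of Lemma~\ref{lh15}, carried out one order further in $1/z$. I would expand $\gamma_{4,1}$ and $\gamma_{4,2}$ via \eqref{gamma} with $\sigma=4$ and decompose each $\zeta_{4,lj}$ through \eqref{4.31} into its $\mathcal{B}_{4,lj}/z^4$ and $\mathcal{W}_{4,lj}/z^2$ parts. The essential new feature compared with Lemma~\ref{lh15} is that the quadratic products in \eqref{gamma} now contribute at the retained order: since $\zeta_{4,lj}(x,z) = -p(x)W_{1,lj}/z^2 + \mathcal{O}(z^{-3})$ for off-diagonal $lj$, each product $\zeta_{4,kj}(x,z)\zeta_{4,ls}(y,z)$ equals $p(x)p(y)W_{1,kj}W_{1,ls}/z^4 + \mathcal{O}(z^{-5})$ when $x,y\in\{0,1\}$, and therefore must be tracked at the $1/z^4$ level.

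For the linear $\mathcal{W}_4$ contribution, decompose $\mathcal{W}_4 = -pW_1 - (p'/z)W_2 - W_3/z^2$ via \eqref{mathcalW3}. The $-pW_1$ and $-(p'/z)W_2$ pieces reproduce, respectively, the vanishing boundary sum and the $\psi_{1,2}/z^3$ contributions already obtained in Lemma~\ref{lh15}. The genuinely new $-W_3/z^4$ piece, with $W_3(0)=W_3(1)$ split via \eqref{8.2'''} into its $p''(0)$, $q(0)$, and $p^2(0)$ components, produces three boundary sums of entries of $Q_1$, $Q_2$, $Q_3$ from \eqref{q12'''}; a direct check shows that each of these three sums vanishes identically in the linear combinations prescribed by \eqref{gamma}, so $W_3$ contributes nothing to either $\gamma_{4,1}$ or $\gamma_{4,2}$. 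For the $\mathcal{B}_{4,lj}/z^4$ integrals I would substitute $F_4$ from \eqref{8.5'''} into \eqref{4.13}--\eqref{4.14*}. The $p''' Q_1/32$ and $(q'/8)(Q_2+2i\mathcal{T})$ portions furnish, after forming the conjugate combinations $\widehat{f}_n \pm \overline{\widehat{f}}_n$, the $i\widehat{p}_{sn}'''/(16z^4)$ and $-i\widehat{q}_{sn}'/(4z^4)$ terms together with the $\varkappa_{4,n}/z^4$ contribution; the $(pp'/64)(Q_4+3\mathbb{I}_4-4i\mathcal{T})$ portion yields Fourier integrals of $pp'$ which, using $pp' = (p^2)'/2$ followed by a single integration by parts, collapse to $\mathcal{O}(n^{-1})$ (the boundary terms cancel between the two conjugate pieces), and are absorbed into the $\mathcal{O}(n^{-5})$ remainder after division by $z^4$.

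Finally, the quadratic $W_1$-product contributions are evaluated using \eqref{6.2}: the combinations prescribed by \eqref{gamma} yield $3p^2(0)/(64z^4)$ from each of the diagonal boundaries $x=y=1$ and $x=y=0$, while the mixed cross term $(x=0,y=1)$ vanishes because both linear combinations of $W_1$-entries in the last product of \eqref{gamma} separately sum to zero. The total is exactly $3p^2(0)/(32z^4)$ in both $\gamma_{4,1}$ and $\gamma_{4,2}$, as claimed. The main obstacle is the combinatorial bookkeeping: verifying the three simultaneous vanishings of the $W_3$-boundary sums (which is what prevents $p''(0)$ and $q(0)$ from appearing in the final formula), computing the $p^2(0)$ tally correctly from the six relevant quadratic $W_1$-products at each boundary, and showing that the $pp'$ Fourier integrals decay as $\mathcal{O}(n^{-1})$ rather than $\mathcal{O}(1)$. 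All three steps are mechanical but require careful sign tracking using the explicit matrices \eqref{6.2}, \eqref{q12'''} and the case-split kernel \eqref{4.9}.
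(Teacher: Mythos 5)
Your proposal is correct and follows essentially the same route as the paper: expand $\gamma_{4,j}$ via \eqref{gamma} with $\sigma=4$, split each $\zeta_{4,lj}$ through \eqref{4.31} into $\mathcal{B}_{4,lj}/z^4$ and $\mathcal{W}_{4,lj}/z^2$, track the quadratic $\mathcal{W}_4$-products (which now enter at order $z^{-4}$ and yield $3p^2(0)/(32z^4)$), observe that the $W_3$-boundary sums vanish and the $p W_1$-sum vanishes while the $p' W_2$-sum reproduces $\psi_{1,2}/z^3$, and extract the $\widehat{p}_{sn}'''$, $\widehat{q}_{sn}'$, $\varkappa_{4,n}$ terms from the $F_4$-Fourier integrals with the $pp'$ portion absorbed into the remainder. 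The only cosmetic difference is that you verify the $W_3$-cancellation component-by-component in $Q_1$, $Q_2$, $Q_3$, whereas the paper states the combined vanishing directly, and your integration-by-parts explanation of the $\mathcal{O}(n^{-1})$ decay of the $pp'$-integrals is a valid reason behind the paper's asserted estimate \eqref{newasympt}.
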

\begin{proof}
Let $z=\pi/2+\pi n+\mathcal{O}(n^{-1})$. The first definition in \eqref{gamma}
and the formula~\eqref{4.31} yield
\begin{equation}\label{newgamma1'''}
\begin{aligned}
\gamma_{4, 1}(z) &=\frac{1}{z^4}\Big(\mathcal{B}_{4, 23}(1, z)
+\mathcal{B}_{4, 33}(1, z)+\mathcal{B}_{4, 14}(1, z)+
\mathcal{B}_{4, 44}(1, z)+\mathcal{B}_{4, 11}(0, z)-\mathcal{B}_{4, 41}(0, z) \\
&\phantom{123}+\mathcal{B}_{4, 22}(0, z)-\mathcal{B}_{4, 32}(0, z)\Big)+
\frac{1}{z^2}\Big(\mathcal{W}_{4, 23}(1, z)+\mathcal{W}_{4, 33}(1, z)
+\mathcal{W}_{4, 14}(1, z)+\mathcal{W}_{4, 44}(1, z)\\
&\phantom{123}+\mathcal{W}_{4, 11}(0, z)-\mathcal{W}_{4, 41}(0, z)
+\mathcal{W}_{4, 22}(0, z)-\mathcal{W}_{4, 32}(0, z)\Big)+
\frac{\psi_3}{z^4}+\mathcal{O}(z^{-5}),
\end{aligned}
\end{equation}
where
\begin{flalign*}
\psi_3 &=\big(\mathcal{W}_{4, 23}(1, z)+\mathcal{W}_{4, 33}(1, z)\big)
\big(\mathcal{W}_{4, 14}(1, z)+\mathcal{W}_{4, 44}(1, z)\big) \\
&\phantom{123}- \big(\mathcal{W}_{4, 13}(1, z)+\mathcal{W}_{4, 43}(1, z)\big)
\big(\mathcal{W}_{4, 24}(1, z)+\mathcal{W}_{4, 34}(1, z)\big) \\
&\phantom{123}+\big(\mathcal{W}_{4, 11}(0, z)-\mathcal{W}_{4, 41}(0, z)\big)
\big(\mathcal{W}_{4, 22}(0, z)-\mathcal{W}_{4, 32}(0, z)\big)\\
&\phantom{123}- \big(\mathcal{W}_{4, 31}(0, z)-\mathcal{W}_{4, 21}(0, z)\big)
\big(\mathcal{W}_{4, 42}(0, z)-\mathcal{W}_{4, 12}(0, z)\big) \\
&\phantom{123}+ \big(\mathcal{W}_{4, 23}(1, z)+\mathcal{W}_{4, 33}(1, z)
+\mathcal{W}_{4, 14}(1, z)+\mathcal{W}_{4, 44}(1, z)\big) \\
&\phantom{123}\times\big(\mathcal{W}_{4, 11}(0, z)-\mathcal{W}_{4, 41}(0, z)
+\mathcal{W}_{4, 22}(0, z)-\mathcal{W}_{4, 32}(0, z)\big)
\end{flalign*}
and $\mathcal{W}_4$ has the form \eqref{mathcalW3}. The identities
\eqref{mathcalW3}, \eqref{6.2}, the periodicity of the function $p$, and the
arguments from Lemma~\ref{lh15} give
\begin{flalign*}
\psi_3 &= p^2(0)\bigg(\big(W_{1, 23}+W_{1, 33}\big)\big(W_{1, 14}+W_{1, 44}\big)-
\big(W_{1, 13}+W_{1, 43}\big)\big(W_{1, 24}+W_{1, 34}\big) \\
&\phantom{123}+\big(W_{1, 11}-W_{1, 41}\big)\big(W_{1, 22}-W_{1, 32}\big)-
\big(W_{1, 31}-W_{1, 21}\big)\big(W_{1, 42}-W_{1, 12}\big) \\
&\phantom{123}+ \big(W_{1, 23}+W_{1, 33}+W_{1, 14}+W_{1, 44}\big)
\big(W_{1, 11}-W_{1, 41}+W_{1, 22}-W_{1, 32}\big)\bigg)+\mathcal{O}(z^{-1}) \\
&=\frac{3p^2(0)}{32}+\mathcal{O}(z^{-1})
\end{flalign*}
and
\begin{flalign*}
\mathcal{W}_{4, 23}(1, z) &+\mathcal{W}_{4, 33}(1, z)+\mathcal{W}_{4, 14}(1, z)
+\mathcal{W}_{4, 44}(1, z)\\
&\phantom{123}+\mathcal{W}_{4, 11}(0, z)-\mathcal{W}_{4, 41}(0, z)
+\mathcal{W}_{4, 22}(0, z)-\mathcal{W}_{4, 32}(0, z)\\
&= \frac{\psi_1}{z} - \frac{1}{z^2}\Big(W_{3, 23}+W_{3, 33}+W_{3, 14}+W_{3, 44}
+W_{3, 11}-W_{3, 41}+W_{3, 22}-W_{3, 32}\Big)=\frac{\psi_1}{z},
\end{flalign*}
where $\psi_1$ is given by \eqref{psi12}. Substituting these relations and
\eqref{4.13}~--~\eqref{4.14*} into \eqref{newgamma1'''} and using~\eqref{8.5'''} and
\begin{equation}\label{newasympt}
\int_0^1e^{\pm i\pi(2n+1)s}p(s)p'(s)\,ds=\mathcal{O}(n^{-1}), \qquad
\int_0^1e^{\pm -\pi(2n+1)s}p(s)p'(s)\,ds=\mathcal{O}(n^{-1}),
\end{equation}
we get
\begin{flalign*}
\gamma_{4, 1}(z) &= \frac{\psi_1}{z^3}+\frac{3p^2(0)}{32z^4}+
\frac{1}{z^4}\bigg(\int_0^1e^{i\pi(2n+1)(1-s)}F_{4, 23}(s)\,ds
+\int_0^1e^{-\pi(2n+1)(1-s)}F_{4, 14}(s)\,ds \\
&\phantom{123}+ \int_0^1e^{-\pi(2n+1)s}F_{4, 41}(s)\,ds
+\int_0^1e^{i\pi(2n+1)s}F_{4, 32}(s)\,ds\bigg)+\mathcal{O}(n^{-5}) \\
&= \frac{\psi_1}{z^3}+\frac{3p^2(0)}{32z^4}
+\frac{1}{32z^4}\int_0^1\Big(p'''(s)-4q'(s)\Big)
\Big(e^{-\pi(2n+1)s}+e^{-\pi(2n+1)(1-s)}\Big)\,ds \\
&\phantom{123}+\frac{1}{32z^4}\int_0^1\Big(e^{i\pi(2n+1)s}+e^{i\pi(2n+1)(1-s)}\Big)
\Big(p'''(s)-4q'(s)\Big)\,ds+\mathcal{O}(n^{-5}) \\
&= \frac{\psi_1}{z^3}+\frac{3p^2(0)}{32z^4}
+\frac{\overline{\widehat{p}}_n'''-\widehat{p}_n'''}{32z^4}-
\frac{\overline{\widehat{q}}_n'-\widehat{q}_n'}{8z^4}
+\frac{\varkappa_{4, n}}{z^4}+\mathcal{O}(n^{-5}).
\end{flalign*}
The identities $\overline{\widehat{p}}_n'''-\widehat{p}_n'''=2i\widehat{p}_{sn}'''$ and
$\overline{\widehat{q}}_n'-\widehat{q}_n'=2i\widehat{q}_{sn}'$ give
the first asymptotics in \eqref{firstp'''q'}.

Similarly, the second definition in \eqref{gamma} and the formula~\eqref{4.31} yield
\begin{equation}\label{newgamma2'''}
\begin{aligned}
\gamma_{4, 2}(z) &= \frac{1}{z^4}\Big(\mathcal{B}_{4, 22}(1, z)
+\mathcal{B}_{4, 32}(1, z)+\mathcal{B}_{4, 14}(1, z)+
\mathcal{B}_{4, 44}(1, z)+\mathcal{B}_{4, 33}(0, z)-\mathcal{B}_{4, 23}(0, z) \\
&\phantom{123}+\mathcal{B}_{4, 11}(0, z)-\mathcal{B}_{4, 41}(0, z)\Big)+
\frac{1}{z^2}\Big(\mathcal{W}_{4, 22}(1, z)+\mathcal{W}_{4, 32}(1, z)
+\mathcal{W}_{4, 14}(1, z)+\mathcal{W}_{4, 44}(1, z) \\
&\phantom{123}+\mathcal{W}_{4, 33}(0, z)-\mathcal{W}_{4, 23}(0, z)+\mathcal{W}_{4, 11}(0, z)-
\mathcal{W}_{4, 41}(0, z)\Big)+\frac{\psi_4}{z^4}+\mathcal{O}(z^{-5}),
\end{aligned}
\end{equation}
where
\begin{flalign*}
\psi_4 &= \big(\mathcal{W}_{4, 22}(1, z)+\mathcal{W}_{4, 32}(1, z)\big)
\big(\mathcal{W}_{4, 14}(1, z)+\mathcal{W}_{4, 44}(1, z)\big) \\
&\phantom{123}- \big(\mathcal{W}_{4, 12}(1, z)+\mathcal{W}_{4, 42}(1, z)\big)
\big(\mathcal{W}_{4, 24}(1, z)+\mathcal{W}_{4, 34}(1, z)\big) \\
&\phantom{123}+\big(\mathcal{W}_{4, 33}(0, z)-\mathcal{W}_{4, 23}(0, z)\big)
\big(\mathcal{W}_{4, 11}(0, z)-\mathcal{W}_{4, 41}(0, z)\big) \\
&\phantom{123}- \big(\mathcal{W}_{4, 43}(0, z)-\mathcal{W}_{4, 13}(0, z)\big)
\big(\mathcal{W}_{4, 21}(0, z)-\mathcal{W}_{4, 31}(0, z)\big) \\
&\phantom{123}+ \big(\mathcal{W}_{4, 22}(1, z)+\mathcal{W}_{4, 32}(1, z)
+\mathcal{W}_{4, 14}(1, z)+\mathcal{W}_{4, 44}(1, z)\big) \\
&\phantom{123}\times\big(\mathcal{W}_{4, 33}(0, z)-\mathcal{W}_{4, 23}(0, z)
+\mathcal{W}_{4, 11}(0, z)-\mathcal{W}_{4, 41}(0, z)\big).
\end{flalign*}
The identities \eqref{mathcalW3} and \eqref{6.2}, the periodicity of the function
$p$, and the arguments from Lemma~\ref{lh15} give
\begin{flalign*}
\psi_4 &= p^2(0)\bigg(\big(W_{1, 22}+W_{1, 32}\big)\big(W_{1, 14}+W_{1, 44}\big)-
\big(W_{1, 12}+W_{1, 42}\big)\big(W_{1, 24}+W_{1, 34}\big) \\
&\phantom{123}+\big(W_{1, 33}-W_{1, 23}\big)\big(W_{1, 11}-W_{1, 41}\big)-
\big(W_{1, 43}-W_{1, 13}\big)\big(W_{1, 21}-W_{1, 31}\big) \\
&\phantom{123}+ \big(W_{1, 22}+W_{1, 32}+W_{1, 14}+W_{1, 44}\big)
\big(W_{1, 33}-W_{1, 23}+W_{1, 11}-W_{1, 41}\big)\bigg)+\mathcal{O}(z^{-1}) \\
&=\frac{3p^2(0)}{32}+\mathcal{O}(z^{-1})
\end{flalign*}
and
\begin{flalign*}
\mathcal{W}_{4, 22}(1, z) &+\mathcal{W}_{4, 32}(1, z)+\mathcal{W}_{4, 14}(1, z)
+\mathcal{W}_{4, 44}(1, z)\\
&\phantom{123}+\mathcal{W}_{4, 33}(0, z)-\mathcal{W}_{4, 23}(0, z)
+\mathcal{W}_{4, 11}(0, z)-\mathcal{W}_{4, 41}(0, z)\\
&= \frac{\psi_2}{z}-\frac{1}{z^2}\Big(W_{3, 22}+W_{3, 32}+W_{3, 14}
+W_{3, 44}+W_{3, 33}-W_{3, 23}+W_{3, 11}-W_{3, 41}\Big)=\frac{\psi_2}{z},
\end{flalign*}
where $\psi_2$ is given by \eqref{psi12}. Substituting these relations and
\eqref{4.13}~--~\eqref{4.14*} into \eqref{newgamma2'''} and using~\eqref{8.5'''}
and \eqref{newasympt}, we get
\begin{flalign*}
\gamma_{4, 2}(z) &= \frac{\psi_2}{z^3}+\frac{3p^2(0)}{32z^4}+
\frac{1}{z^4}\int_0^1e^{-\pi(2n+1)(1-s)}F_{4, 14}(s)\,ds+
\frac{1}{z^4}\int_0^1e^{-\pi(2n+1)s}F_{4, 41}(s)\,ds+\mathcal{O}(n^{-5}) \\
&= \frac{\psi_2}{z^3}+\frac{3p^2(0)}{32z^4}+\frac{1}{32z^4}\int_0^1\Big(p'''(s)-4q'(s)\Big)
\Big(e^{-\pi(2n+1)s}+e^{-\pi(2n+1)(1-s)}\Big)\,ds+\mathcal{O}(n^{-5}).
\end{flalign*}
This yields the second asymptotics in \eqref{firstp'''q'}.
\end{proof}

Now we determine asymptotics of the eigenvalues $\mu_n$.
\begin{lemma}\label{thp'''q'}
Let $p'''$, $q'\in L^1(\mathbb{T})$. Then the eigenvalues $\mu_n$ as $n\to +\infty$
satisfy the asymptotics
\begin{equation}\label{p'''q'}
\begin{aligned}
\mu_n &= \big(\frac{\pi}{2}+\pi n\big)^4-p_0\big(\frac{\pi}{2}+\pi n\big)^2+\frac{p_0^2-\|p\|^2}{8}+q_0-
\frac{p'(0)}{2} \\
&\phantom{123}+\frac{\widehat{p}_{sn}'''}{4\pi(2n+1)}-\frac{\widehat{q}_{sn}'}{\pi(2n+1)}+\mathcal{O}(n^{-2}).
\end{aligned}
\end{equation}
\end{lemma}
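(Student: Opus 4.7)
The plan is to mimic the three preceding proofs (Lemmas~\ref{lh5.41}, \ref{thp'q}, \ref{thp''q}). Setting $\lambda = z^4 = \mu_n$ with $n \to +\infty$, Lemma~\ref{thp''q} already yields the starting expansion
\[
z = \frac{\pi}{2} + \pi n - \frac{p_0}{2\pi(2n+1)} + \frac{-\|p\|^2 + 8q_0 - 2p_0^2 - 4p'(0) - 2\widehat{p}_{cn}'' + 8\widehat{q}_{cn}}{4\pi^3(2n+1)^3} + \delta_n, \qquad \delta_n = \mathcal{O}(n^{-4}).
\]
Since $\mu_n = z^4$ and $d(z^4)/dz \sim 4(\pi n)^3$, pinning down $\delta_n$ to precision $\mathcal{O}(n^{-5})$ produces the claimed $\mathcal{O}(n^{-2})$ remainder for $\mu_n$, so the whole task is to extract the next term of $\delta_n$.

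The main computation substitutes the asymptotics \eqref{firstp'''q'} of Lemma~\ref{lhgamma'''}, together with \eqref{alpha+beta'''} and \eqref{psi12}, into the master identity \eqref{xi1234} with $\sigma = 4$. Exactly as in the proof of Lemma~\ref{thp''q}, the combination $\psi_1 e^{-i\alpha_4 z} + \psi_2 e^{i\alpha_4 z}$ collapses to $\tfrac{p'(0)}{4}(\cos \alpha_4 z - \sin \alpha_4 z)$, and the common coefficient $3p^2(0)/(32 z^4)$ present in both $\gamma_{4,1}$ and $\gamma_{4,2}$ combines into $\tfrac{3 p^2(0)\cos\alpha_4 z}{16 z^4}$. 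The genuinely new contributions are the pieces $\tfrac{i\widehat{p}_{sn}'''}{16 z^4}$ and $-\tfrac{i\widehat{q}_{sn}'}{4 z^4}$ carried by $\gamma_{4,1}$ alone; multiplied by $e^{-i\alpha_4 z} \sim i(-1)^{n+1}$, these supply the terms responsible for the coefficients of $\widehat{p}_{sn}'''$ and $\widehat{q}_{sn}'$ appearing in \eqref{p'''q'}.

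I would then expand $\alpha_4 z = z + \tfrac{p_0}{4z} + \tfrac{\|p\|^2 - 8q_0}{32 z^3}$ around $\pi/2 + \pi n$ to the required order and derive the corresponding asymptotics for $\cos\alpha_4 z$, $\sin\alpha_4 z$, $e^{\pm i\alpha_4 z}$ and $e^{-i\beta_4 z}$. Setting $\det\phi(z) = 0$ and collecting terms (mirroring the calculation that led to \eqref{prsin''}) reduces to a linear equation for $\delta_n$ whose leading solution is proportional to $(\widehat{p}_{sn}''' - 4\widehat{q}_{sn}')/(\pi^4 (2n+1)^4)$; substituting back into $z$ and expanding $\mu_n = z^4$ to order $\mathcal{O}(n^{-2})$ then yields \eqref{p'''q'}.

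The hardest part will be the careful bookkeeping at precision $\mathcal{O}(n^{-5})$ in $z$. The higher-order corrections to $\alpha_4 z$ already encode the constants $(p_0^2 - \|p\|^2)/8 + q_0 - p'(0)/2$ inherited from Lemma~\ref{thp''q}, so one must check that they do not introduce spurious contributions at order $1/(\pi(2n+1))$ that would interfere with the new coefficients of $\widehat{p}_{sn}'''$ and $\widehat{q}_{sn}'$, and verify that the $3p^2(0)/32$ piece combines consistently with the fourth-power expansion of the previously established constants.
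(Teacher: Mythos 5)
Your overall plan faithfully reproduces the paper's strategy: feed the output of Lemma~\ref{thp''q} into \eqref{xi1234} with $\sigma=4$, use Lemma~\ref{lhgamma'''} for $\gamma_{4,1},\gamma_{4,2}$, expand $\cos\alpha_4 z$, $\sin\alpha_4 z$, $e^{\pm i\alpha_4 z}$, $e^{-i\beta_4 z}$, and solve a linear equation for $\delta_n$. The collapse of $\psi_1 e^{-i\alpha_4 z}+\psi_2 e^{i\alpha_4 z}$ to $\tfrac{p'(0)}{4}(\cos\alpha_4 z-\sin\alpha_4 z)$ and your precision accounting are correct.

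However, there is a bookkeeping inconsistency that, if carried out mechanically, would give a factor-of-two error. You keep the terms $-2\widehat{p}_{cn}''+8\widehat{q}_{cn}$ explicitly inside the $z^{-3}$ coefficient of your ansatz for $z$, and then you state that solving $\det\phi(z)=0$ yields $\delta_n$ proportional to $(\widehat{p}_{sn}'''-4\widehat{q}_{sn}')/(\pi^4(2n+1)^4)$. These are not independent contributions: by the exact identities $\widehat{p}_{sn}'''=-\pi(2n+1)\widehat{p}_{cn}''$ and $\widehat{q}_{sn}'=-\pi(2n+1)\widehat{q}_{cn}$, the term
\[
\frac{-2\widehat{p}_{cn}''+8\widehat{q}_{cn}}{4\pi^3(2n+1)^3}
=\frac{\widehat{p}_{sn}'''-4\widehat{q}_{sn}'}{2\pi^4(2n+1)^4}
\]
that you already placed in the ansatz is \emph{exactly} the quantity you claim will reappear as the leading part of $\delta_n$. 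With your ansatz the equation for $\delta_n$ should therefore deliver $\delta_n=\mathcal{O}(n^{-5})$ --- i.e., a cancellation --- not the $\mathcal{O}(n^{-4})$ term you announce; if you add the $(\widehat{p}_{sn}'''-4\widehat{q}_{sn}')/(2\pi^4(2n+1)^4)$ term on top of the ansatz, the coefficients of $\widehat{p}_{sn}'''$ and $\widehat{q}_{sn}'$ in $\mu_n$ come out doubled. The paper sidesteps this by observing that $p'''\in L^1$ and $q'\in L^1$ force $\widehat{p}_{cn}'',\widehat{q}_{cn}=\mathcal{O}(n^{-1})$, so those Fourier-coefficient pieces can be dropped from the explicit part of the ansatz (see \eqref{z'''}, which keeps only the $n$-independent numerator $-\|p\|^2+8q_0-2p_0^2-4p'(0)$) and then cleanly reappear as the leading part of $\delta_n$. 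You should either adopt the paper's leaner ansatz or, if you retain the sharper ansatz, explicitly verify the cancellation and conclude $\delta_n=\mathcal{O}(n^{-5})$; otherwise the argument as written is self-contradictory. (Also a small slip: the analogue of the display you want to mirror in this step is \eqref{prsin'''}, not \eqref{prsin''}.)
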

\begin{proof}
Let $\lambda=z^4=\mu_n$, $n\to +\infty$. Lemma~\ref{thp''q} shows
\begin{equation}\label{z'''}
z=\frac{\pi}{2}+\pi n-\frac{p_0}{2\pi(2n+1)}
+\frac{-\|p\|^2+8q_0-2p_0^2-4p'(0)}{4\pi^3(2n+1)^3}+\delta_n, \quad
\delta_n=\mathcal{O}(n^{-4}).
\end{equation}
Substituting the asymptotics \eqref{firstp'''q'} into \eqref{xi1234} and using \eqref{psi12},
we have
\begin{equation}\label{prsin'''}
\begin{aligned}
\det\phi(z) &= 4iz^6e^{-i\beta_4z}\bigg(2\cos\alpha_4z+e^{-i\alpha_4z}
\Big(\frac{\psi_1}{z^3}+\frac{3p^2(0)}{32z^4}+
\frac{i\widehat{p}_{sn}'''}{16z^4}-\frac{i\widehat{q}_{sn}'}{4z^4}
+\frac{\varkappa_{4, n}}{z^4}+\mathcal{O}(z^{-5})\Big) \\
&\phantom{123}+ e^{i\alpha_4z}\Big(\frac{\psi_2}{z^3}+\frac{3p^2(0)}{32z^4}
+\frac{\varkappa_{4, n}}{z^4}+\mathcal{O}(z^{-5})\Big)\bigg) \\
&= 4iz^6e^{-i\beta_4z}\bigg(2\cos\alpha_4z\Big(1+\frac{3p^2(0)}{2\pi^4(2n+1)^4}+
\frac{16\varkappa_{4, n}}{\pi^4(2n+1)^4}\Big)\\
&\phantom{123}+\frac{2p'(0)}{\pi^3(2n+1)^3}\Big(\cos\alpha_4z-\sin\alpha_4z\Big)+
\frac{i(\widehat{p}_{sn}'''-4\widehat{q}_{sn}')}{\pi^4(2n+1)^4}e^{-i\alpha_4z}
+\mathcal{O}(n^{-5})\bigg),
\end{aligned}
\end{equation}
where $\alpha_4$, $\beta_4$, and $\varkappa_{4, n}$ have the form
\eqref{alpha+beta'''} and \eqref{kappa4}, respectively.
Using \eqref{alpha+beta'''} and \eqref{z'''}, we get
\begin{flalign*}
\alpha_4z &= z+\frac{p_0}{4z}+\frac{\|p\|^2}{32z^3}-\frac{q_0}{4z^3} \\
&=\frac{\pi}{2}+\pi n+\delta_n-
\frac{p_0}{2\pi(2n+1)}+\frac{-\|p\|^2+8q_0-2p_0^2-4p'(0)}{4\pi^3(2n+1)^3}
+\frac{p_0}{4z}+\frac{\|p\|^2}{32z^3}-\frac{q_0}{4z^3}+\mathcal{O}(n^{-5}) \\
&=\frac{\pi}{2}+\pi n-\frac{p'(0)}{\pi^3(2n+1)^3}+\delta_n+\mathcal{O}(n^{-5}).
\end{flalign*}
Then
\begin{flalign*}
\cos\alpha_4z &= (-1)^{n+1}\Big(\delta_n-\frac{p'(0)}{\pi^3(2n+1)^3}+\mathcal{O}(n^{-5})\Big),
\qquad e^{-i\beta_4z}=e^{\pi/2+\pi n}\big(1+\mathcal{O}(n^{-1})\big), \\
\sin\alpha_4z &= (-1)^n\Big(1+\mathcal{O}(n^{-6})\Big), \qquad
e^{-i\alpha_4z}=(-1)^{n+1}i\Big(1+\mathcal{O}(n^{-1})\Big).
\end{flalign*}
Substituting these asymptotics into \eqref{prsin'''} and using \eqref{asymptkappa4}, we obtain
\[
\det\phi(z)= 4iz^6e^{\pi/2+\pi n}(-1)^{n+1}\Big(2\delta_n-
\frac{\widehat{p}_{sn}'''-4\widehat{q}_{sn}'}{\pi^4(2n+1)^4}+\mathcal{O}(n^{-5})\Big).
\]
The equation $\det\phi(z)=0$ yields
\[
\delta_n=\frac{\widehat{p}_{sn}'''-4\widehat{q}_{sn}'}{2\pi^4(2n+1)^4}+\mathcal{O}(n^{-5}).
\]
Then identity \eqref{z'''} gives
\begin{flalign*}
z &= \frac{\pi}{2}+\pi n-\frac{p_0}{2\pi(2n+1)}+\frac{-\|p\|^2+8q_0-2p_0^2-4p'(0)}{4\pi^3(2n+1)^3}
+\frac{\widehat{p}_{sn}'''-4\widehat{q}_{sn}'}{2\pi^4(2n+1)^4}+\mathcal{O}(n^{-5}),
\end{flalign*}
which yields the asymptotics~\eqref{p'''q'}.
\end{proof}

\begin{proof}[Proof of Theorem~\ref{th1}.] The asymptotics \eqref{pq+} are proved
in Lemma~\ref{lh5.41}. Substituting the identities
\[
\widehat{q}_{sn}'=-\pi(2n+1)\widehat{q}_{cn}, \qquad
\widehat{p}_{sn}'''=2\pi(2n+1)p'(0)+\pi^3(2n+1)^3\widehat{p}_{cn},
\]
into asymptotics \eqref{p'''q'}, we get \eqref{p'''q'1}.
\end{proof}

\section*{Appendix}
This Appendix is devoted to the asymptotics of the characteristic function $D$.
Since the boundary conditions of the operator $H$ are not symmetric, the classical
methods for finding the trace formula require additional research. This will be the
subject of a separate work. But it is convenient to calculate the asymptotics of
the characteristic function $D$, since all the necessary calculations have already been done.

Now we determine the asymptotics of the functions $\gamma_{4, 1}$ and $\gamma_{4, 2}$
defined by \eqref{gamma} in the case $p''''$ and $q''\in L^1(\mathbb{T})$.
\begin{lemma}
Let $p''''$, $q''\in L^1(\mathbb{T})$ and let $|z|\to +\infty$. Then
\begin{equation}\label{g1g2}
\gamma_{4, 1}(z) = \frac{\psi_1}{z^3}+\frac{3p^2(0)}{32z^4}
+\frac{\varkappa_{4, n}}{z^4}+\mathcal{O}(z^{-5}), \qquad
\gamma_{4, 2}(z)=\frac{\psi_2}{z^3}+\frac{3p^2(0)}{32z^4}
+\frac{\varkappa_{4, n}}{z^4}+\mathcal{O}(z^{-5}),
\end{equation}
where $\varkappa_{4, n}$, $\psi_1$, and $\psi_2$ have the form \eqref{kappa4}
and \eqref{psi12}, respectively.
\end{lemma}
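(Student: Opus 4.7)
The plan is to repeat the calculation of Lemma~\ref{lhgamma'''} while exploiting the additional regularity $p'''',q''\in L^1(\mathbb{T})$ to absorb the Fourier-type oscillatory terms into the remainder by means of one further integration by parts.

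I start from the representations \eqref{newgamma1'''} and \eqref{newgamma2'''}. The contributions coming from $\mathcal{W}_4$, together with the quadratic correction quantities $\psi_3$ and $\psi_4$, depend only on the matrix identities \eqref{6.2}, \eqref{6.2'}, \eqref{8.2'''} and on the periodicity of $p$; they were already evaluated in the proof of Lemma~\ref{lhgamma'''} and produce exactly $\psi_1/z^3+3p^2(0)/(32z^4)$ and $\psi_2/z^3+3p^2(0)/(32z^4)$, respectively. No new work is needed for this piece.

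Next I treat the $\mathcal{B}_4$-contributions using the explicit formulas \eqref{4.13}--\eqref{4.14*} with $\sigma=4$. The kernels $e^{iz(x-s)(\omega_l-\omega_j)}$ split into two types on $\mathcal{Z}_+^+$: the decaying kernels (those with $\omega_l-\omega_j=\pm 2i$), which localize near the endpoints and reproduce the $\varkappa_{4,n}/z^4$ term exactly as in Lemma~\ref{lhgamma'''}, requiring no new hypothesis; and the oscillatory kernels (those with $\omega_l-\omega_j=\pm 2$), which in Lemma~\ref{lhgamma'''} produced, after specializing $z=\pi/2+\pi n+\mathcal{O}(n^{-1})$, the Fourier coefficient terms $i\widehat{p}_{sn}'''/(16z^4)$ and $-i\widehat{q}_{sn}'/(4z^4)$.

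For these oscillatory integrals I integrate by parts once, writing for example
\[
\int_0^1 e^{\pm 2izs}\,p'''(s)\,ds=\frac{p'''(0)\bigl(e^{\pm 2iz}-1\bigr)}{\pm 2iz}-\frac{1}{\pm 2iz}\int_0^1 e^{\pm 2izs}\,p''''(s)\,ds,
\]
using the periodicity $p'''(0)=p'''(1)$, and analogously for the integrals involving $q'$, where $q''$ takes the role played by $p''''$. Since $|e^{\pm 2iz}|\leqslant 1$ uniformly on $\mathcal{Z}_+^+$ (where $\mathrm{Im}\,z\geqslant 0$), and since $p'''',q''\in L^1(\mathbb{T})$, the boundary term is $\mathcal{O}(z^{-1})$ and the Riemann--Lebesgue-type remainder is $\mathcal{O}(z^{-1})$ as well. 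Multiplied by the prefactor $z^{-4}$ appearing in \eqref{newgamma1'''}--\eqref{newgamma2'''}, these contributions fall into $\mathcal{O}(z^{-5})$ and are absorbed into the error term of \eqref{g1g2}.

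The main obstacle is the bookkeeping: I must verify that all the boundary values $p'''(0)$, $q'(0)$ created by the integration by parts, weighted by the corresponding entries of the matrices $Q_1,Q_2,Q_4$ from \eqref{8.5'''}, cancel in pairs (thanks to the periodicity of $p,q$ and the symmetry of the matrices) and produce nothing of order $z^{-4}$ that could alter the explicit constants $\psi_1$, $\psi_2$, or $3p^2(0)/32$. Once this is checked, the same manipulations that yielded Lemma~\ref{lhgamma'''} deliver \eqref{g1g2}, now uniformly in $\mathrm{arg}\,z$ throughout $\mathcal{Z}_+^+$ as $|z|\to+\infty$.
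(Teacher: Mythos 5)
Your argument follows the same basic idea as the paper's proof: one additional integration by parts, made available by $p'''',q''\in L^1(\mathbb{T})$, pushes the Fourier-type contributions down to the $\mathcal{O}(z^{-5})$ remainder. The paper's version is more compact. It simply observes from \eqref{8.5'''} that $F_4'\in L^1(\mathbb{T})$, integrates by parts in \eqref{4.14}--\eqref{4.14*} to get $\mathcal{B}_{4,lj}(x,z)=\mathcal{O}(z^{-1})$ for \emph{all} $(l,j)$ (oscillatory and decaying kernels alike), and then substitutes into \eqref{4.31} to conclude $\zeta_{4,lj}=\mathcal{W}_{4,lj}/z^2+\mathcal{O}(z^{-5})$; re-running the algebra of Lemma~\ref{lhgamma'''} with this improved remainder yields \eqref{g1g2} directly. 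Your version separates the decaying from the oscillatory kernels and only integrates the latter by parts; both routes are valid and reach the same result.

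Two remarks. First, the ``main obstacle'' you raise is not actually there. You have already shown that the boundary term $p'''(0)(e^{\pm 2iz}-1)/(\pm 2iz)$ produced by the integration by parts is $\mathcal{O}(z^{-1})$ uniformly on the relevant sector, and that after multiplication by the prefactor $z^{-4}$ it lands in $\mathcal{O}(z^{-5})$. Hence no cancellation ``in pairs'' of the $p'''(0)$, $q'(0)$ boundary values, weighted by entries of $Q_1,Q_2,Q_4$, is needed to protect the $z^{-3}$ and $z^{-4}$ coefficients: those boundary terms cannot contribute at order $z^{-4}$ regardless of how they combine. The sentence beginning ``Once this is checked\dots'' is therefore a phantom gap -- the check is already done by your own estimate a few lines earlier, and a referee might flag the internal inconsistency. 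Second, the claim ``$|e^{\pm 2iz}|\leqslant 1$ uniformly on $\mathcal{Z}_+^+$'' is false for the minus sign, since $|e^{-2iz}|=e^{2\,\mathrm{Im}\,z}\geqslant 1$ there. This does not damage your argument, because the kernels $K_{lj}$ of \eqref{4.9} are constructed so that the exponentials appearing in \eqref{4.14}--\eqref{4.14*} always have nonpositive real part of the exponent (the index ordering and the cutoffs $\chi(x-s)$, $\chi(s-x)$ guarantee this), so only bounded exponentials of the form $e^{2iz\tau}$ with $\tau\geqslant 0$ actually arise; but you should state the estimate in terms of the combined factor, not of $e^{\pm 2iz}$ in isolation.
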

\begin{proof}
Let $p''''$, $q''\in L^1(\mathbb{T})$. Identity \eqref{8.5'''} implies $F_4'\in L^1(\mathbb{T})$.
Integrating by parts in \eqref{4.14} and \eqref{4.14*}, we get
\[
\mathcal{B}_{4, lj}(x, z)=\mathcal{O}(z^{-1}), \qquad l, j=1, 2, 3, 4,
\]
for $|z|\to +\infty$. Substituting this asymptotics into \eqref{4.31}, we obtain
\begin{equation}\label{blj}
\zeta_{4, lj}(x, z)=\frac{\mathcal{W}_{4, lj}(x, z)}{z^2}+\mathcal{O}(z^{-5}), \qquad
l, j=1, 2, 3, 4, \qquad x\in [0, 1].
\end{equation}
Repeating the proof of the Lemma~\ref{lhgamma'''} and using \eqref{blj}, we obtain \eqref{g1g2}.
\end{proof}

In order to determine the asymptotics of the characterstic function $D$
we use the representation \eqref{3.9}.
\begin{lemma}
Let $p''''$, $q''\in L^1(\mathbb{T})$. Then the characteristic function $D$ satisfies
the following asymptotics
\begin{equation}\label{9.1'}
\begin{aligned}
D(\lambda) &= -\frac{e^{-i\beta_4z}\cos\alpha_4z}{2}\bigg(1+\frac{\varkappa_{4, n}}{z^4}
+\frac{p'(0)(1-\mathrm{tg}\,\alpha_4z)}{8z^3}+\mathcal{O}(z^{-5})\bigg)
\end{aligned}
\end{equation}
as $|z|\to +\infty$, on the contour $C_0(\pi/2+\pi(N+1/2))$ given by~\eqref{contcN}.
Here the functions $\alpha_4$ and $\beta_4$ given by~\eqref{alpha+beta'''}.
\end{lemma}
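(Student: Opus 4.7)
The plan is to combine the representation $D(\lambda)=\det\phi(z)/\det A(0,z)$ from \eqref{3.9} with the factorization \eqref{xi1234} at $\sigma=4$ and the refined asymptotics \eqref{g1g2} of $\gamma_{4,1},\gamma_{4,2}$. Using $\psi_1=(1-i)p'(0)/8$, $\psi_2=(1+i)p'(0)/8$, the elementary identity
\[
(1-i)e^{-iz\alpha_4}+(1+i)e^{iz\alpha_4}=2\cos\alpha_4 z-2\sin\alpha_4 z
\]
gives
\[
e^{-iz\alpha_4}\gamma_{4,1}(z)+e^{iz\alpha_4}\gamma_{4,2}(z)=\frac{p'(0)}{4z^3}\big(\cos\alpha_4 z-\sin\alpha_4 z\big)+\Big(\frac{3p^2(0)}{16z^4}+\frac{2\varkappa_{4,n}}{z^4}\Big)\cos\alpha_4 z+\mathcal{O}(z^{-5}).
\]
Since $C_0(\pi/2+\pi(N+1/2))$ sits midway between successive zeros of $\cos z$ and $\alpha_4=1+\mathcal{O}(z^{-2})$, the function $\cos\alpha_4 z$ is bounded away from zero and $\mathrm{tg}\,\alpha_4 z=\mathcal{O}(1)$. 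Factoring $2\cos\alpha_4 z$ out of the bracket in \eqref{xi1234} is therefore legitimate, yielding
\[
\det\phi(z)=8iz^6 e^{-i\beta_4 z}\cos\alpha_4 z\Big(1+\frac{p'(0)(1-\mathrm{tg}\,\alpha_4 z)}{8z^3}+\frac{3p^2(0)}{32z^4}+\frac{\varkappa_{4,n}}{z^4}+\mathcal{O}(z^{-5})\Big).
\]

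Next I would refine \eqref{5.8} to the same accuracy. Writing $A(0,z)=\Omega(z)U_3(0,z)\mathcal{X}(0,z)$ from \eqref{8.8'''} at $x=0$, we have $\det\Omega=-16iz^6$, and from \eqref{4.13} the diagonal of $\mathcal{B}_4$ vanishes so that $\det\mathcal{X}(0,z)=1+\mathcal{O}(z^{-5})$. For the remaining factor I would use the standard $4\times4$ determinant expansion
\[
\det U_3(0,z)=1+\frac{\mathrm{tr}\,\mathcal{W}_4(0,z)}{z^2}-\frac{\mathrm{tr}(\mathcal{W}_4(0,z)^2)}{2z^4}+\mathcal{O}(z^{-6}),
\]
and then observe that the matrices in \eqref{6.2}, \eqref{6.2'}, \eqref{q12'''} are all trace-free: $W_1$ and $W_2$ have zero diagonal by inspection, and each of $Q_1,Q_2,Q_3$ is trace-free, so $\mathrm{tr}\,\mathcal{W}_4(0,z)=0$. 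The $z^{-4}$ term therefore reduces at leading order to $-p^2(0)\mathrm{tr}(W_1^2)/(2z^4)$, and a direct computation (each diagonal entry of $W_1^2$ equals $-3/64$) yields $\mathrm{tr}(W_1^2)=-3/16$, so
\[
\det U_3(0,z)=1+\frac{3p^2(0)}{32z^4}+\mathcal{O}(z^{-5}).
\]

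Combining these pieces gives $\det A(0,z)=-16iz^6\big(1+3p^2(0)/(32z^4)+\mathcal{O}(z^{-5})\big)$, and in the quotient \eqref{3.9} the $3p^2(0)/(32z^4)$ contributions in numerator and denominator cancel, leaving exactly \eqref{9.1'}. The main obstacle is this cancellation: both the off-diagonal combination inside $\gamma_{4,1},\gamma_{4,2}$ and the $2\times 2$ minor correction in $\det U_3(0,z)$ independently produce a $3p^2(0)/(32z^4)$ term, and only their precise mutual cancellation --- dictated by $\mathrm{tr}(W_1^2)=-3/16$ --- preserves the advertised $\mathcal{O}(z^{-5})$ remainder. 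The only other technical point, the uniform boundedness of $\mathrm{tg}\,\alpha_4 z$ on $C_0(\pi/2+\pi(N+1/2))$, follows from the equidistance of this contour from the zeros of $\cos z$ together with $\alpha_4=1+\mathcal{O}(z^{-2})$.
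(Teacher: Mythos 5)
Your proof is correct and follows essentially the same route as the paper's. The only substantive difference is in evaluating $\det U_3(0,z)$: you expand $\det(\mathbb{I}_4+\mathcal{W}_4/z^2)$ directly via elementary symmetric functions of the traces and observe that $W_1$, $W_2$, $Q_1$, $Q_2$, $Q_3$ are all trace-free, whereas the paper computes $\det U_3(0,z)=\exp\big(\mathrm{tr}\ln(\mathbb{I}_4+\mathcal{W}_4(0,z)/z^2)\big)$ and expands the logarithm. Both reduce the $z^{-4}$ coefficient to $-p^2(0)\,\mathrm{tr}(W_1^2)/2$ with $\mathrm{tr}(W_1^2)=-3/16$, and both ultimately exhibit the same cancellation of the $3p^2(0)/(32z^4)$ term between $\det\phi(z)$ and $\det A(0,z)$. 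You also spell out the boundedness of $\mathrm{tg}\,\alpha_4 z$ on the contour, which the paper leaves implicit; this is a worthwhile addition, though the phrase ``midway between successive zeros of $\cos z$'' is only literally accurate at the real intersection point of the contour with the positive axis --- off the real axis one instead uses that $|\cos z|$ grows like $e^{|\mathrm{Im}\,z|}$ and hence $\mathrm{tg}\,\alpha_4 z$ stays bounded uniformly on the whole arc.
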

\begin{proof}
Let $|z|\to +\infty$. Substituting \eqref{g1g2} into \eqref{xi1234}, we obtain
\begin{equation}\label{xi2trace}
\begin{aligned}
\det\phi(z) &= 4iz^6e^{-i\beta_4z}\bigg(2\cos\alpha_4z+e^{-i\alpha_4z}\Big(\frac{\psi_1}{z^3}
+\frac{3p^2(0)}{32z^4}+\frac{\varkappa_{4, n}}{z^4}+\mathcal{O}(z^{-5})\Big) \\
&\phantom{123}+e^{i\alpha_4z}\Big(\frac{\psi_2}{z^3}+\frac{3p^2(0)}{32z^4}
+\frac{\varkappa_{4, n}}{z^4}+\mathcal{O}(z^{-5})\Big)\bigg)+\mathcal{O}(z^4) \\
&= 4iz^6e^{-i\beta_4z}\bigg(2\cos\alpha_4z\Big(1+\frac{3p^2(0)}{32z^4}
+\frac{\varkappa_{4, n}}{z^4}\Big)
+\frac{p'(0)(\cos\alpha_4z-\sin\alpha_4z)}{4z^3}+\mathcal{O}(z^{-5})\bigg) \\
&= 8iz^6e^{-i\beta_4z}\cos\alpha_4z\bigg(1+\frac{3p^2(0)}{32z^4}+\frac{\varkappa_{4, n}}{z^4}
+\frac{p'(0)}{8z^3}\Big(1-\mathrm{tg}\,\alpha_4z)\Big)+\mathcal{O}(z^{-5})\bigg)
\end{aligned}
\end{equation}
on the contour $C_0(\pi/2+\pi(N+1/2))$, where $\alpha_4$ and $\beta_4$ have the form
\eqref{alpha+beta'''}.

Let $|z|\to +\infty$. Then $\ln(\mathbb{I}_4+\mathcal{W}_4(0, z)/z^2)$ is well-defined. Here
$\mathcal{W}_4$ is given by \eqref{mathcalW3}. The definitions \eqref{6.2}, \eqref{6.2'},
and \eqref{8.2'''} yield
\[
\mathrm{tr}\,\ln\Big(\mathbb{I}_4+\frac{1}{z^2}\mathcal{W}_4(0, z)\Big)=-\frac{p^2(0)}{2z^4}
\mathrm{tr}\,W_1^2+\mathcal{O}(z^{-5}).
\]
Substituting \eqref{6.2} into this asymptotics, we have
\[
\mathrm{tr}\,\ln\Big(\mathbb{I}_4+\frac{1}{z^2}\mathcal{W}_4(0, z)\Big)
=\frac{3p^2(0)}{32z^4}+\mathcal{O}(z^{-5}).
\]
Therefore, this asymptotics and the identity \eqref{8.1'''} give
\[
\det U_3(0, z) = \mathrm{exp}\,\bigg(\mathrm{tr}\,
\ln\Big(\mathbb{I}_4+\frac{1}{z^2}\mathcal{W}_4(0, z)\Big)\bigg)=
1+\frac{3p^2(0)}{32z^4}+\mathcal{O}(z^{-5}).
\]
Recall that $\det\Omega= -16iz^6$. Substituting these asymptotics into \eqref{8.8'''}, we get
\[
\det A(0, z) = \det\Omega \det U_3(0, z)
\Big(1+\frac{1}{z^4}\mathrm{tr}\,\mathcal{B}_4(0, z)+\mathcal{O}(z^{-5})\Big)
=-16iz^6\Big(1+\frac{3p^2(0)}{32z^4}+\mathcal{O}(z^{-5})\Big).
\]
Substituting the last asymptotics and \eqref{xi2trace} into \eqref{3.9},
we obtain the asymptotics~\eqref{9.1'}.
\end{proof}

\begin{center}
\textbf{Funding}
\end{center}

This work was partially supported by the grant MK-160.2022.1.1 of the President
of Russian Federation for young candidates of sciences.


\begin{thebibliography}{}
\bibitem{Oron}
Oron A., Davis S.H., Bankoff S.G. Long-scale evolution of thin liquid films.
Rev. Modern Phys. 1997. V.~69. P.~931--980.

\bibitem{Matar}
Matar O.K., Craster R.V. Dynamics and stability of thin liquid films. Rev. Modern Phys. 2009.
V.~81. P.~1131--1198.

\bibitem{Kitavtsev}
Kitavtsev G., Recke L., Wagner B. Asymptotics for the spectrum of a thin film equation
in a singular limit. Siam. J. Appl. Dyn. Syst. 2012. V.~11 (4). P.~1425--1457.

\bibitem{Pugh}	
Laugesen R.S., Pugh M.C. Linear stability of steady states for thin film and
Cahn-Hilliard type equations. Arch. Ration. Mech. Anal. 2000. V.~154. P.~3--51.

\bibitem{Marchenko}
Marchenko~V.A. The Sturm-Liouville operators and their applications. Birkh\"auser
Verlag, Basel, 1986.

\bibitem{LevSarg}
Levitan~B.M., Sargsyan~I.S. Sturm-Liouville and Dirac operators. Kluwer, Dordrecht, 1991.

\bibitem{FulPruess}
Fulton~C.T. Pruess~S.A. Eigenvalue and eigenfunction asymptotics for regular
Sturm-Liouville problems. J. Math. Anal. Appl. 1994. V.~188(1). P.~297--340.

\bibitem{CPS}
Caudill~Jr L.F., Perry~P.A., Schueller~A.W. Isospectral sets for fourth-order
ordinary differential operators. SIAM J. Math. Anal. 1998. V.~29(4). P.~935--966.

\bibitem{McLaughlin1}
McLaughlin~J.R. An inverse eigenvalue problem of order four.
SIAM J. Math. Anal. 1976. V.~7(5). P.~646--661.

\bibitem{McLaughlin2}
McLaughlin~J.R. An inverse eigenvalue problem of order four - an infinite case.
SIAM J. Math. Anal. 1978. V.~9(3). P.~395--413.

\bibitem{PolSMJ}
Polyakov~D.M. Spectral analysis of a fourth-order nonselfadjoint operator with
nonsmooth coefficients. Sib. Math. J. 2015. V.~56(1). P.~138--154.

\bibitem{BK-20131}
Badanin~A., Korotyaev~E. Trace formula for fourth order operators on the circle.
Dynamics of PDE. 2013. V.~10(4). P.~343--352.

\bibitem{BK-20152}
Badanin~A., Korotyaev~E. Trace formula for fourth order operators on unit interval. II.
Dynamics of PDE. 2015. V.~12(3). P.~217--239.

\bibitem{BK-2011}
Badanin A., Korotyaev E. Spectral estimates for periodic fourth order operators.
St. Petersburg Math. J. 2011. V.~22(5). 703--736.

\bibitem{BK-2014}
Badanin~A., Korotyaev~E. Sharp eigenvalue asymptotics for fourth order operators
on the circle. J. Math. Anal. Appl. 2014. V.~417. 804--818.

\bibitem{Kerimov}
Gunes~H., Kerimov~N.B., Kaya U. Spectral properties of fourth order differential operators
with periodic and antiperiodic boundary conditions. Results Math. 2015. V.~68. P.~501--518.

\bibitem{Polyakov-StMJ}
Polyakov D.M. Spectral analysis of a fourth order differential operator with periodic
and antiperiodic boundary conditions. St. Petersburg Math. J. 2016. V.~27(5). 789--811.

\bibitem{Polyakov-2020}
Polyakov D.M. Spectral estimates for the fourth-order operator with matrix coefficients.
Comp. Math. Math. Phys. 2020. V.~60(7). 1163--1184.

\bibitem{Pap1}
Papanicolaou V.G. The spectral theory of the vibrating periodic beam.
Comm. Math. Phys. 1995. V.~170. 359--373.

\bibitem{Pap2}
Papanicolaou V.G. The periodic Euler-Bernoulli equation. Trans. Amer. Math. Soc. 2003.
V.~355(9). 3727--3759.

\bibitem{Pap3}
Papanicolaou V.G. An inverse spectral result for the periodic Euler-Bernoulli equation.
Indiana Univ. Math. J. 2004. V.~53(1). 223--242.

\bibitem{Naimark}
Naimark~M. Linear differential operators. Part I. Elementary theory of linear
differential operators. Frederick Ungar Publishing, New York, N.Y., 1967.

\bibitem{Akhmerova}
Akhmerova~E.F. Asymptotics of the spectrum of nonsmooth perturbations of differential
operators of order $2m$. Math. Notes. 2011. V.~90(5-6). P.~813--823.

\bibitem{BK-2012}
Badanin~A., Korotyaev~E. Even order periodic operator on the real line.
Int. Math. Res. Not. 2012. no. 5. 1143--1194.

\bibitem{<Molyboga_Mikhailets2>}
Mikhailets~V., Molyboga~V. Uniform estimates for the semi-periodic eigenvalues of
the singular differential operators. Methods Funct. Anal. Topology. 2004. V.~10(4). P.~30--57.

\bibitem{Polyakov-2016}
Polyakov~D.M. Spectral properties of an even-order differential operator.
Diff. Equat. 2016. V.~52(8). P.~1098--1103.

\bibitem{Golovina}
Golovina A.M. Spectrum of periodic elliptic operators with distant perturbations in space.
St. Petersburg Math. J. 2014. V.~25. 735--754.

\bibitem{Gol_Borisov}
Borisov D.I., Golovina A.M. On the resolvents of periodic operators with
distant perturbations. Ufa Math. J. 2012. V.~4(2). 55--64.

\bibitem{Borisov1}
Borisov D.I. Asymtotic behaviour of the spectrum of a waveguide with distant perturbation.
Math. Phys. Anal. Geom. 2007. V.~10(2). 155--196.

\bibitem{Borisov2}
Borisov D.I. Distant perturbation of the Laplacian in a multi-dimensional space.
Ann. Henri Poincare. 2007. V.~8(7). 1371--1399.

\bibitem{Birkhoff1}
Birkhoff~G.D. On the asymptotic character of the solutions of certain linear
differential equations containing a parameter. Trans. Amer. Math. Soc. 1908.
V.~9(2). P.~219--231.

\bibitem{Birkhoff2}
Birkhoff G.D. Boundary value and expansion problems of ordinary linear differential
equations. Trans. Amer. Math. Soc. 1908. V.~9. P.~373--395.

\bibitem{BK-2019}
Badanin A., Korotyaev E.L. Third-order operators with three-point conditions
associated with Boussinesq's equation. 2021. V.~100(3). P.~527--560.

\bibitem{Fedoryuk}
Fedoryuk~M.V. Asymptotic analysis: linear ordinary differential equations.
Springer Science and Business Media, 2012.
\end{thebibliography}
\end{document}